\numberwithin{equation}{section}
\newtheorem{theorem}{Theorem}
\newtheorem{lemma}{Lemma}
\newtheorem{corollary}{Corollary}
\newtheorem{assumption}{Assumption}
\newtheorem{example}{Example}
\newcommand{\R}{\mathbb{R}}
\newcommand{\bz}{\mathbf{z}}
\newcommand{\beq}{\begin{eqnarray}}
\newcommand{\eeq}{\end{eqnarray}}
\title{\Large Non-Convex Optimization via Non-Reversible Stochastic Gradient Langevin Dynamics}
\author{
{Yuanhan Hu}\,\footnote{Both authors have equal contributions.}
\,\footnote{Department of Management Science
and Information Systems, Rutgers Business School, Piscataway, NJ-08854, United States of America; yh586@scarletmail.rutgers.edu.},
{Xiaoyu Wang$^{*}$}\,\footnote{Department of Mathematics, Florida State University, 1017 Academic Way, Tallahassee, FL-32306, United States of America; xwang3@math.fsu.edu.},
{Xuefeng Gao}\,\footnote{Department of Systems Engineering and Engineering Management, The Chinese University of Hong Kong, Shatin, N.T. Hong Kong; xfgao@se.cuhk.edu.hk.},
    {Mert G\"{u}rb\"{u}zbalaban}\,\footnote{Department of Management Science
and Information Systems and the DIMACS Institute, Rutgers University, Piscataway, NJ-08854, United States of America;
    mg1366@rutgers.edu.},
  Lingjiong Zhu\,\footnote{Department of Mathematics, Florida State University, 1017 Academic Way, Tallahassee, FL-32306, United States of America; zhu@math.fsu.edu.
  }
}
\date{\today}
\begin{document}

\setlength{\abovedisplayskip}{15pt}
\setlength{\belowdisplayskip}{15pt}

\maketitle

\begin{abstract}
Stochastic Gradient Langevin Dynamics (SGLD) is a poweful algorithm for optimizing a non-convex objective, where 
a controlled and properly scaled Gaussian noise is added to the stochastic gradients to steer the iterates towards a global minimum. SGLD is based on the overdamped Langevin diffusion which is reversible in time. 
By adding an anti-symmetric matrix to the drift term of the overdamped Langevin diffusion, one gets a non-reversible diffusion
that converges to the same stationary distribution
with a faster convergence rate. 
In this paper, we study the Mon-reversible Stochastic Gradient Langevin Dynamics (NSGLD) which is based on discretization of the non-reversible Langevin diffusion.
We provide finite-time performance bounds for the global convergence of NSGLD for solving stochastic non-convex optimization problems. Our results lead to non-asymptotic guarantees for both population and empirical risk minimization problems. Numerical experiments
for Bayesian independent component analysis and neural network models
show that NSGLD can
outperform SGLD
with proper choices of the anti-symmetric matrix.
\end{abstract}


\section{Introduction}
Consider the stochastic non-convex optimization problem:
\begin{equation} 
\min_{x\in\mathbb{R}^{d}}F(x):=\mathbb{E}_{Z \sim \mathcal{D}}[f(x,Z)]\,,
\label{eqn:populationrisk}
\end{equation} 
where $f: \mathbb{R}^d \times \mathcal{Z} \rightarrow \mathbb{R}$ is a continuous and possibly non-convex function and $Z$ is a random vector with an probability distribution $\mathcal{D}$ supported on some space $\mathcal{Z}$. Many problems in statistical learning theory can be formulated this way where $f$ is the per-sample loss function, $x$ is the model parameter to be learned and $Z$ models the random input data \cite{vapnik2013nature}.
Because the distribution $\mathcal{D}$ is typically unknown in practice, a common approach is to consider the empirical risk minimization problem
\begin{equation}
\min_{x\in\mathbb{R}^d} F_{\mathbf{z}}(x):=\frac{1}{n}\sum_{i=1}^{n}f(x,z_i)\,,
\label{eqn:empiricalsum}
\end{equation}
based on the dateset $\mathbf{z} = \left(z_1,z_2,...,z_n\right) \in \mathcal{Z}^n$ 
as a proxy to the population risk minimization problem \eqref{eqn:populationrisk} and minimize the empirical risk
\begin{equation}
\mathbb{E}F_{\mathbf{z}}(x) - \min_{x\in\mathbb{R}^d} F_{\mathbf{z}}(x)\,, 
\label{eqn:empiricalrisk}
\end{equation}
instead, where the expectation is taken with respect to any other randomness  
encountered during the algorithm to generate $x$. Many algorithms have been proposed to solve the problem \eqref{eqn:populationrisk} and its finite-sum version \eqref{eqn:empiricalrisk}. Among these, gradient descent, stochastic gradient and their variance-reduced or momentum-based variants come with guarantees for finding a local minimizer or a stationary point for non-convex problems. In some applications, convergence to a local minimum can be satisfactory. However, in general, methods with global convergence guarantees are also desirable and preferable in many settings \cite{xu2018global,li2016preconditioned}.

Stochastic gradient algorithms based on Langevin Monte Carlo are popular variants of stochastic gradient which admit asymptotic global convergence guarantees where a properly scaled Gaussian noise is added to the gradient estimate. 
Recently, Raginsky \emph{et al.} \cite{RRT17} provided a non-asymptotic analysis
of Stochastic Gradient Langevin Dynamics (SGLD, see \cite{welling2011bayesian}) to find the global minimizers for both population risk and empirical risk minimization problems. See also \cite{xu2018global} for related results. The SGLD can be viewed as the analogue of stochastic gradient in the Markov Chain Monte Carlo (MCMC) literature.
The SGLD iterates $\{X_k\}$ takes the following update form:
\begin{equation} \label{eq:SGLD-iterates}
X_{k+1}=X_{k}-\eta g_k +\sqrt{2\eta\beta^{-1}}\xi_k\,,
\end{equation}
where $\eta>0$ is the step size, $\beta>0$ is the inverse temperature, $g_k$ is a conditionally unbiased estimator of the gradient $\nabla F_{\mathbf{z}}$ and $\xi_k$ is a standard Gaussian random vector. The analysis of SGLD in \cite{RRT17} is built on the continuous-time diffusion process known as the overdamped Langevin stochastic differential equation (SDE):
\begin{equation} 
dX(t)=-\nabla F_{\mathbf{z}}\left(X(t)\right)dt+\sqrt{2\beta^{-1}}dB(t) , \quad t \geq 0\,, \label{reversibleSDE}
\end{equation}
where $B(t)$ is a standard $d$-dimensional Brownian motion.
 The overdamped Langevin diffusion \eqref{reversibleSDE} is reversible\footnote{A diffusion process $X(t)$ is reversible
if $X(0)$ is distributed according to the stationary measure $\pi$, then $(X(t))_{0\leq t \leq T}$ and $(X(T-t))_{0\leq t\leq T}$ have the same law for each $T$.} and admits a
unique stationary (or equilibrium) distribution $\pi_{\mathbf{z}}(dx) \propto e^{-\beta F_{\mathbf{z}}(x)}dx$ under some assumptions on $F_{\mathbf{z}}$. It is well documented that (see e.g. \cite{HHS05}) considering non-reversible variants of \eqref{reversibleSDE} can help accelerate convergence of the diffusion to the equilibrium. 
Specifically,  
consider the following non-reversible diffusion:
\begin{equation}\label{nonreversibleSDE}
dX(t)=-A_J\left( \nabla F_{\mathbf{z}}(X(t)) \right)dt + \sqrt{2\beta^{-1}}dB(t) , \quad A_J = I+J\,, 
\end{equation}
where $J\neq 0$ is a $d \times d$ anti-symmetric matrix, i.e. $J^{T} = -J$, and $I$ is a $d \times d$ identity matrix. The stationary distribution $\pi_{\mathbf{z}}$ of this non-reversible Langevin diffusion \eqref{nonreversibleSDE} is the same as that of the overdamped Langevin diffusion \eqref{reversibleSDE}. In addition, \cite{HHS05} showed by comparing the spectral gaps that adding $J\neq 0$, the convergence to the stationary distribution of \eqref{nonreversibleSDE} is at least as fast as the overdamped Langevin diffusion ($J=0$), and is strictly faster except for some rare situations.



In this paper, we study a Non-reversible Stochastic Gradient Langevin Dynamics (NSGLD) and use it to solve the non-convex population and empirical risk minimization problems. NSGLD is based on the Euler-discretization of \eqref{nonreversibleSDE} with a stochastic gradient which has the update:
\begin{equation} 
X_{k+1}=X_{k}-\eta A_J g\left( X_k,U_{\mathbf{z},k} \right)  + \sqrt{2\eta\beta^{-1}}\xi_k\,, \label{NSGLD}
\end{equation}
where $\left\{ U_{\mathbf{z},k} \right\}_{k=0}^{\infty}$ is a sequence of i.i.d. random elements such that $g$ is a conditionally unbiased estimator for the gradient of $F_{\mathbf{z}}$ and satisfies $
\mathbb{E}\left[ g\left( x,U_{\mathbf{z},k} \right)\right] = \nabla F_{\mathbf{z}}(x)$ for any $x\in \mathbb{R}^d.$ When $J=0$, the NSGLD iterates in \eqref{NSGLD} reduces to the SGLD iterates in \eqref{eq:SGLD-iterates}. 
Although asymptotic convergence guarantees for non-reversible Langevin diffusions \eqref{nonreversibleSDE} exists
(see e.g. \cite{HHS93,HHS05}), there is a lack of finite-time explicit performance bounds for solving stochastic non-convex optimization problems with NSGLD in the literature. 



\textbf{Contributions.}
We establish the global convergence of NSGLD and
 provide finite-time guarantees of NSGLD to find approximate minimizers of both empirical and population risks. Specifically,
 
(1) Under Assumptions~\ref{A1}-\ref{A5} for the component functions $f(x,z)$ and the gradient noises, 
we show that NSGLD converges to an $\varepsilon-$approximate global minimizer of the empirical risk minimization problem after $\text{poly}\left(\frac{1}{|\lambda_{\mathbf{z},J}|},\beta,d,\frac{1}{\varepsilon}, \lVert A_J \rVert \right)$ iterations in expectation, where $\lambda_{\mathbf{z},J}$ is the spectral gap of the non-reversible Langevin SDE \eqref{nonreversibleSDE} governing the speed of convergence to its stationary distribution. See Corollary~\ref{cor:ERM-bd} and Equation~\eqref{eq:iteratoins-KJ}. 
(2) On the technical side, we adapt the proof techniques of \cite{RRT17} developed for SGLD to NSGLD and combine it with the analysis of \cite{HHS05}. 
We overcome several technical challenges and the key steps of our proofs are as follows.
First, we show in Theorem 6 the convergence of the expected empirical risk $\mathbb{E}[F_{\mathbf{z}}(X(t)]$ for the non-reversible Langevin SDE as $t \rightarrow \infty$. We build on \cite{HHS05} but their results
do not directly imply the convergence of the expected empirical risk. We overcome this challenge by establishing a novel uniform $L^{4}$ bound of $X(t)$, apply the continuous-time convergence results from \cite{HHS05} on a compact set, and provide additional estimates outside the compact set. Second, we show that NSGLD iterates track the non-reversible Langevin
 SDE closely with small step sizes. We
use the approach in \cite{RRT17} via relative entropy estimates. 
But our analysis requires establishing new uniform $L^{2}$ bound and exponential integrability of $X(t)$ in \eqref{nonreversibleSDE},
by using a different Lyapunov function from \cite{RRT17}. In addition, our discretization error improves the one in \cite{RRT17} for $J=0$, based on a tighter estimate on the exponential integrability.

(3) We complement our theoretical results with the empirical evaluations of the performance of NSGLD on a variety of optimization tasks such as optimizing a simple double well function, Bayesian Independent Component Analysis and Neural Network Models. Our experiments suggest that NSGLD can outperform SGLD in applications. 

\textbf{Related Literature.} A number of papers studied the non-reversible SDE $X(t)$
in \eqref{nonreversibleSDE} with a quadratic objective $F_{\mathbf{z}}$, in which case $X(t)$ becomes a 
Gaussian process. Using the rate of convergence of the covariance of $X(t)$
as the criterion, \cite{HHS93} showed that $J=0$ is the worst choice.
\cite{LNP13} proved the existence of the optimal antisymmetric matrix $J$
such that the rate of convergence to equilibrium is maximized,
and provided an easily implementable algorithm for constructing them.
\cite{WHC2014} proposed two approaches to design $J$ to obtain the optimal convergence rate of Gaussian diffusion
and they also compared their algorithms with the one in \cite{LNP13}. See also 
\cite{guillin2016} for related results. However, the optimal choice of $J$ is still open when the objective is non-quadratic. 

Another line of related research focused on sampling and Monte Carlo methods based on the non-reversible Langevin diffusion \eqref{nonreversibleSDE}. As have been observed in the literature \cite{rey2016improving}, non-revesible Langevin sampler can outperform their reversible counterparts in terms of rate of convergence to equilibrium, asymtotic variance \cite{DLP2016, reyGraphs} and large deviation functionals \cite{reyLDP}. 
See also \cite{DPZ17} for non-reversible samplers based on splitting schemes. 
We also refer the readers to \cite{ma2015complete} which presented a general recipe for devising stochastic gradient MCMC samplers based on continuous diffusions including the non-reversible SDE in \eqref{nonreversibleSDE}.
Our work is different from these studies in that we focus on optimization and analyze the expected suboptimality of NSGLD iterates, while typically one studies the convergence to equilibrium for ergodic averages in sampling.

\section{Preliminaries}

We first state the assumptions used in this paper below. Note that we do not assume $f$ to be convex or strongly convex in any region. 

\begin{assumption} \label{A1}
The function $f$ is continuously differentiable and taking non-negative values, then there exists some constant $A,B \geq 0$, such that 
$|f(0,z)|\leq A$ and $\lVert \nabla f(0,z) \rVert \leq B$, for any $z\in \mathcal{Z}$.  
\end{assumption}

\begin{assumption} \label{A2}
For each $z \in \mathcal{Z}$, the function $f(\cdot,z)$ is $M$-smooth: for some $M>0$,
\begin{equation*}
\lVert \nabla f(w,z) - \nabla f(v,z) \rVert \leq M \lVert w-v \rVert\,,
\qquad\text{for any $w,v\in\mathbb{R}^{d}$}.
\end{equation*}
\end{assumption}

\begin{assumption} \label{A3}
For any $z\in \mathcal{Z}$, $f(\cdot,z)$ is $(m,b)$-dissipative, that is
\begin{equation*}
\langle x, \nabla f(x,z)\rangle \geq m\lVert x \rVert^2 - b\,,
\qquad\text{for any $x\in\mathbb{R}^{d}$}.
\end{equation*}
\end{assumption}

\begin{assumption} \label{A4}
There exists $\delta \in [0,1 )$, for any data set $\mathbf{z}$, such that 
\begin{equation*}
\mathbb{E} \lVert g\left( x,U_{\mathbf{z}} \right) - \nabla F_{\mathbf{z}}(x) \rVert^2  \leq 2\delta \left(M^2\lVert x \rVert^2+B^2 \right)\,,
\qquad\text{for any $x\in\mathbb{R}^{d}$}.
\end{equation*}
\end{assumption}

\begin{assumption} \label{A5}
The initial state $X(0)$ of the NSGLD algorithm satisfies $\lVert X(0) \rVert \leq R := \sqrt{b/m}$ with probability one,
i.e. $X(0)\in B_{R}(0)$, the Euclidean ball centered at $0$ with radius $R$.
\end{assumption}



We next recall the result on the convergence rate to the equilibrium of the non-reversible Langevin SDE in \eqref{nonreversibleSDE} in Hwang \emph{et al.} \cite{HHS05}. Write $\pi_{\mathbf{z}}$ for its stationary distribution.
Let $\mathcal{L}_{\mathbf{z},J}$ be the infinitesimal generator \cite{bakry2013analysis} of the SDE in \eqref{nonreversibleSDE}. Define 
\begin{equation}\label{spectrum:z}
\lambda_{\mathbf{z},J}:= \sup\left\{\text{the real part of } \phi:  \phi  \text{ is in the spectrum of } \mathcal{L}_{\mathbf{z},J}, \phi \neq 0 \right\}\,. 
\end{equation}
In general, the eigenvalues of the generator $\mathcal{L}_{\mathbf{z},J}$ are complex numbers, there is a simple eigenvalue 0 and all the other eigenvalues have negative real parts. The quantity $\lambda_{\mathbf{z},J}$ (or sometimes $|\lambda_{\mathbf{z},J}|$) is referred to as the spectral gap of the generator $\mathcal{L}_{\mathbf{z},J}$, since $|\lambda_{\mathbf{z},J}|$ is the minimal gap between the zero eigenvalue and the real parts of the rest of the non-zero eigenvalues. The existence of a spectral gap, i.e. $\lambda_{\mathbf{z},J}<0$, implies that 
the non-reversible Langevin SDE in \eqref{nonreversibleSDE} converges to equilibrium exponentially fast with rate $\lambda_{\mathbf{z},J}$ in the following sense: 
\begin{equation}\label{eq:L2-contraction-SG}
\Vert T(t)g - \pi_\mathbf{z}(g)\Vert_{L^2 (\pi_{\mathbf{z}})} \le \sqrt{C_{\mathbf{z}, J}} \cdot \Vert g - \pi_\mathbf{z}(g)\Vert_{L^2 (\pi_{\mathbf{z}})} \cdot e^{ \lambda_{\mathbf{z},J} t }, \quad \text{for any $g \in L^2 (\pi_{\mathbf{z}})$},
\end{equation}
where $T(t)= e^{t \mathcal{L}_{\mathbf{z},J}}$, $\pi_\mathbf{z}(g)$ means the integration of $g$ with respect to $\pi_\mathbf{z}$, $\Vert\cdot\Vert_{L^2 (\pi_{\mathbf{z}})}$ denote the norm in $L^2 (\pi_{\mathbf{z}})$, and $C_{\mathbf{z}, J}$ is a constant that may depend on $F_\mathbf{z}$ and $J$. See Equation (3) in \cite{HHS05}.
Note when $J=0,$ the constant $C_{\mathbf{z}, J} \equiv 1$. See e.g. \cite[Section 3.1]{rey2016improving}.


Using the spectral gap as one comparison criteria, Hwang \emph{et al.} \cite[Section~2]{HHS05} showed that
\begin{itemize}
\item $\lambda_{\mathbf{z},J} \le \lambda_{\mathbf{z},J=0} <0$; 
\item The equality $\lambda_{\mathbf{z},J}=\lambda_{\mathbf{z},J=0}$ holds in some rare situations: if $\lambda_{\mathbf{z},J=0}$ is in the discrete spectrum of $\mathcal{L}_{\mathbf{z},J=0}$, then $\lambda_{\mathbf{z},J} = \lambda_{\mathbf{{z}},J=0}$ if and only if $(\mathcal{L}_{\mathbf{z}, J}-\mathcal{L}_{\mathbf{z},J=0})$ or $(\mathcal{L}_{\mathbf{z}, -J}-\mathcal{L}_{\mathbf{z},J=0})$ leaves a nonzero
 subspace of the eigenspace corresponding to $\lambda_{\mathbf{z},J=0}$ to be invariant.    
\end{itemize}
In other words, generically the non-reversible Langevin SDE in \eqref{nonreversibleSDE} converges to the equilibrium faster than the reversible SDE in \eqref{reversibleSDE}. Note that this is a continuous-time result. 


\section{Main Results}

\subsection{Convergence to Equilibrium in Expectations}
We now state our first set of results. The proofs are given in Appendix~\ref{sec:proof-main}. 
Conditional on the sample 
$\mathbf{z}$, we use $\nu_{\mathbf{z}, t}$ to denote the probability law of the continuous-time process $X(t)$ in \eqref{nonreversibleSDE} at time $t$ and  $\pi_{\mathbf{z}}$ to denote its stationary distribution. 
 The following result establishes the convergence of the expected empirical risk $\mathbb{E}[F_{\mathbf{z}}(X(t)]$ as $t \rightarrow \infty$. Recall that $\lambda_{\mathbf{z},J}$ is defined in \eqref{spectrum:z}.

\begin{theorem} \label{thm:nonreversible2gibbs}
Considering the non-reversible Langevin SDE in \eqref{nonreversibleSDE}. If Assumptions ~\ref{A1} - \ref{A3} hold, then for any $\beta \geq 3/m$ and $\varepsilon>0 $,
\begin{equation*}
\left|\mathbb{E}_{X \sim \nu_{\mathbf{z},k\eta}} F_{\mathbf{z}}(X) - \mathbb{E}_{X \sim \pi_{\mathbf{z}}} F_{\mathbf{z}}(X) \right| \leq \mathcal{I}_{0}(\mathbf{z},J,\varepsilon) \,,
\end{equation*}
where 
\begin{equation}
\mathcal{I}_{0}(\mathbf{z},J,\varepsilon) : =   \left[\left( \frac{M+B}{2} +\frac{B}{2}+A\right) \hat{C}_{\mathbf{z},J} + (M+B)\mathcal{D}_c\right]\cdot \varepsilon \,,  \label{thm:const:I0}
\end{equation}
provided that
$ k\eta \geq \max \left\{ \frac{2}{|\lambda_{\mathbf{z},J}|}\log\left(\frac{1}{\varepsilon}\right), \, 1 \right\}.
$
Here, the constant $\mathcal{D}_c$ is defined in \eqref{lm:const:Dc} and the constant $\hat{C}_{\mathbf{z},J}$ is defined in \eqref{hat:C:z:J} in the appendix.
\end{theorem}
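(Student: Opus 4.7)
The plan is to bound the expectation difference by combining the $L^{2}(\pi_{\mathbf{z}})$ contraction \eqref{eq:L2-contraction-SG} from \cite{HHS05} with moment estimates that control the complement of a compact set. The core difficulty is that \eqref{eq:L2-contraction-SG} is an $L^{2}(\pi_{\mathbf{z}})$ statement, while the initial law $\nu_{\mathbf{z},0}$ is concentrated on $B_{R}(0)$ (Assumption~\ref{A5}), so its density with respect to $\pi_{\mathbf{z}}$ is not in $L^{2}(\pi_{\mathbf{z}})$ and the contraction cannot be applied directly.

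First I would establish a uniform fourth-moment bound $\sup_{t\ge 0}\mathbb{E}\|X(t)\|^{4}\le C_{4}$ by applying It\^o's formula to a Lyapunov function of growth order four and exploiting dissipativity (Assumption~\ref{A3}). The anti-symmetric drift contributes terms of the form $-4\|x\|^{2}\langle x,J\nabla F_{\mathbf{z}}(x)\rangle$, which are not controlled by $\langle x,\nabla F_{\mathbf{z}}\rangle$ alone; to close the estimate one uses the linear growth $\|\nabla F_{\mathbf{z}}(x)\|\le M\|x\|+B$ from Assumptions~\ref{A1}--\ref{A2} together with Young's inequality, absorbing the excess into $m\|x\|^{4}$ supplied by dissipativity. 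The same computation yields $\int\|x\|^{4}\,d\pi_{\mathbf{z}}<\infty$; the hypothesis $\beta\ge 3/m$ enters here to keep constants temperature-independent.

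Next I would split space into a compact ball $B_{\rho}$ and its complement. Using the quadratic growth bound $|F_{\mathbf{z}}(x)|\le \frac{M}{2}\|x\|^{2}+B\|x\|+A$ derived from Assumptions~\ref{A1}--\ref{A2}, combined with Cauchy--Schwarz and Chebyshev applied to the fourth-moment bound, the tails
$$\mathbb{E}\bigl[|F_{\mathbf{z}}(X(k\eta))|\,\mathbf{1}_{\{\|X(k\eta)\|>\rho\}}\bigr]\quad\text{and}\quad \int_{\{\|x\|>\rho\}}|F_{\mathbf{z}}(x)|\,d\pi_{\mathbf{z}}$$
are bounded by a quantity of order $(M+B)\,\mathcal{D}_{c}\,\varepsilon$ for an appropriate choice of $\rho$. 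For the contribution on $B_{\rho}$, I would use a short-time smoothing step: by ellipticity of the SDE \eqref{nonreversibleSDE}, the density $h_{1}:=d\nu_{\mathbf{z},1}/d\pi_{\mathbf{z}}$ exists and lies in $L^{2}(\pi_{\mathbf{z}})$ uniformly over initial conditions in $B_{R}(0)$. Setting $g:=F_{\mathbf{z}}\chi$ for a smooth cutoff supported on $B_{\rho}$ and using the Markov property,
$$\mathbb{E}[g(X(k\eta))] - \pi_{\mathbf{z}}(g) = \int\bigl(T(k\eta-1)g - \pi_{\mathbf{z}}(g)\bigr)\, h_{1}\, d\pi_{\mathbf{z}},$$
so Cauchy--Schwarz combined with \eqref{eq:L2-contraction-SG} and the crude bound $|g|\le \frac{M+B}{2}+\frac{B}{2}+A$ on $B_{\rho}$ yield a contribution of order $\hat{C}_{\mathbf{z},J}\bigl(\frac{M+B}{2}+\frac{B}{2}+A\bigr)e^{\lambda_{\mathbf{z},J}(k\eta-1)}$, which is $\le \hat{C}_{\mathbf{z},J}\bigl(\frac{M+B}{2}+\frac{B}{2}+A\bigr)\varepsilon$ once $k\eta\ge \frac{2}{|\lambda_{\mathbf{z},J}|}\log(1/\varepsilon)$.

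Adding the two contributions produces the stated form of $\mathcal{I}_{0}(\mathbf{z},J,\varepsilon)$. The main technical hurdle is precisely the uniform fourth-moment bound in the non-reversible setting: the standard reversible Lyapunov argument does not automatically handle the extra drift $-J\nabla F_{\mathbf{z}}$, and one must carefully exploit the growth of $\nabla F_{\mathbf{z}}$ and the boundedness of $\|J\|$ to absorb the non-dissipative terms. The waiting-step trick that produces an $L^{2}(\pi_{\mathbf{z}})$ density at time $1$ is the other essential ingredient, bridging the gap between the concentrated initial law and the $L^{2}$ contraction provided by \cite{HHS05}.
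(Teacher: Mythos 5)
Your overall architecture matches the paper's: a uniform fourth-moment bound, a split of space into a ball and its complement (Chebyshev on the tail, the spectral-gap contraction on the ball), and a short positive ``waiting time'' to turn the Dirac-type initial law into something with an $L^{2}(\pi_{\mathbf{z}})$ density so that \eqref{eq:L2-contraction-SG} applies (the paper implements this via a Harnack inequality with an explicit constant, which is where $g_{\mathbf{z},J}$ and hence $\hat{C}_{\mathbf{z},J}$ come from). However, there is a genuine gap in the step you yourself flag as the main hurdle. If you take $\|x\|^{4}$ (or any polynomial in $\|x\|$) as the Lyapunov function, the generator produces the cross term $-4\|x\|^{2}\langle x,J\nabla F_{\mathbf{z}}(x)\rangle$, which by $\|\nabla F_{\mathbf{z}}(x)\|\le M\|x\|+B$ is only controlled by $4\|J\|M\|x\|^{4}+4\|J\|B\|x\|^{3}$. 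The quartic part is homogeneous of the same degree as the dissipative term $-4m\|x\|^{4}$, so Young's inequality cannot shrink its coefficient: the estimate closes only when $\|J\|M<m$, i.e.\ for small $\|J\|$, whereas the theorem allows arbitrary anti-symmetric $J$. The paper's Lemma~\ref{lm:uniform-L4} avoids this entirely by taking $F_{\mathbf{z}}^{2}$ as the Lyapunov function: since $\langle J\nabla F_{\mathbf{z}},\nabla F_{\mathbf{z}}\rangle=0$ for anti-symmetric $J$, one gets $\mathcal{L}_{J}F_{\mathbf{z}}=-\|\nabla F_{\mathbf{z}}\|^{2}+\beta^{-1}\Delta F_{\mathbf{z}}$ with no $J$-dependence at all, and the reversible argument goes through verbatim. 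You should replace your candidate Lyapunov function by a function of $F_{\mathbf{z}}$ for the $L^{4}$ (and exponential-moment) estimates.

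A secondary inconsistency: on the ball $B_{\rho}$ you bound $|F_{\mathbf{z}}\chi|$ by the constant $\tfrac{M+B}{2}+\tfrac{B}{2}+A$, which requires $\rho\le 1$; but with $\rho$ fixed the Chebyshev tail $\mathcal{D}_{c}/\rho^{2}$ is a constant, not $O(\varepsilon)$. The two halves only fit together if $\rho$ grows with $k\eta$; the paper takes $\rho=K=e^{|\lambda_{\mathbf{z},J}|k\eta/4}$, so that the $K^{2}$ growth of $\sup_{B_{K}}\|x\|^{2}$ is paid for by half of the exponential decay ($K^{2}e^{\lambda_{\mathbf{z},J}k\eta}=e^{\lambda_{\mathbf{z},J}k\eta/2}\le\varepsilon$) while the tail is $2\mathcal{D}_{c}/K^{2}=2\mathcal{D}_{c}e^{\lambda_{\mathbf{z},J}k\eta/2}$. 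With these two repairs your outline reproduces the paper's proof.
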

The next result controls the error at time $k$ between the discretized process and the stationary distribution for a given sample $\mathbf{z}$. Specifically,
 we consider the iterates $X_k$ of the NSGLD algorithm in \eqref{NSGLD}, and we denote its probability law by $\mu_{\mathbf{z},k}$ conditional on $\mathbf{z}$. Since the NSGLD algorithm is based on the Euler discretization of the non-reversible Langevin SDE in \eqref{nonreversibleSDE}, we can control the discretization error with stochastic gradients and use Theorem~\ref{thm:nonreversible2gibbs} to obtain the following result.  

\begin{corollary} \label{cor:NSGLD2gibbs}
Under the setting of Theorem~\ref{thm:nonreversible2gibbs} where the Assumptions ~\ref{A1} - \ref{A5} hold, let $\beta \geq 3/m$, for any given $\varepsilon>0$, the performance bound  of NSGLD algorithm admits
 \begin{equation*}
\left|\mathbb{E}_{X \sim \mu_{\mathbf{z},k}} F_{\mathbf{z}}(X) - \mathbb{E}_{X \sim \pi_{\mathbf{z}}} F_{\mathbf{z}}(X) \right| \leq \mathcal{I}_{0}(\mathbf{z},J,\varepsilon) + \mathcal{I}_1(\mathbf{z},J,\varepsilon) \,,
\end{equation*} 
where $\mathcal{I}_0(\mathbf{z},J,\varepsilon)$ is defined in \eqref{thm:const:I0} and
\begin{equation} \label{eq:I1}
\mathcal{I}_{1}(\mathbf{z},J,\varepsilon) := \left(M\sqrt{\mathcal{C}_d}+B \right)\left( \hat{C_0} \frac{\varepsilon}{\sqrt{\lvert \lambda_{\mathbf{z},J=0}\rvert}}+ \hat{C_1}\delta^{1/4}\sqrt{\frac{2\log(1/\varepsilon)}{\lvert \lambda_{\mathbf{z},J} \rvert}}\lVert A_J \rVert \right) \sqrt{\log\left( \frac{2\log(1/\varepsilon)}{\lvert \lambda_{\mathbf{z},J} \rvert} \right)}\,,
\end{equation}
provided that the step size $\eta$ satisfies
\begin{equation} \label{eq:eta}
\eta \leq \min\left\{1, \frac{m^2}{(m^2+ 8M^2)M\lVert A_J\rVert^2},\frac{\varepsilon^4}{4(\log(1/\varepsilon))^2\lVert A_J \rVert^4} \frac{\lvert \lambda_{\mathbf{z},J} \rvert^2}{\lvert \lambda_{\mathbf{z},J=0} \rvert^2} \right\} \,,
\end{equation}
and
\begin{equation} \label{eq:k-eta}
k\eta =  \frac{2}{|\lambda_{\mathbf{z},J}|}\log\left(\frac{1}{\varepsilon}\right)\geq e \,.
\end{equation} 
Here $\delta$ is the gradient noise level satisfying Assumption~\ref{A4},
the constants $\mathcal{C}_d, \, \hat{C_0}$, $\hat{C_1}$ are explicit and can be found in 
Lemma~\ref{lm:uniform-L2} and Lemma~\ref{lm:approximation} in the appendix respectively.
\end{corollary}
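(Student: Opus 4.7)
The plan is to split the error by the triangle inequality through the continuous-time marginal $\nu_{\mathbf{z},k\eta}$:
\[
\bigl|\mathbb{E}_{\mu_{\mathbf{z},k}} F_{\mathbf{z}} - \mathbb{E}_{\pi_{\mathbf{z}}} F_{\mathbf{z}}\bigr|
\le \bigl|\mathbb{E}_{\mu_{\mathbf{z},k}} F_{\mathbf{z}} - \mathbb{E}_{\nu_{\mathbf{z},k\eta}} F_{\mathbf{z}}\bigr|
+ \bigl|\mathbb{E}_{\nu_{\mathbf{z},k\eta}} F_{\mathbf{z}} - \mathbb{E}_{\pi_{\mathbf{z}}} F_{\mathbf{z}}\bigr|.
\]
For the second term, since \eqref{eq:k-eta} guarantees $k\eta \ge \max\{2|\lambda_{\mathbf{z},J}|^{-1}\log(1/\varepsilon),1\}$, Theorem~\ref{thm:nonreversible2gibbs} directly yields the bound $\mathcal{I}_0(\mathbf{z},J,\varepsilon)$. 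All the real work therefore goes into the first term, which I will call the \emph{discretization error}.

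To handle the discretization error, I would follow the relative-entropy route of \cite{RRT17}, adapted to the non-reversible drift $-A_J\nabla F_{\mathbf{z}}$. On the path space $C([0,k\eta];\mathbb{R}^d)$, let $P^c$ be the law of the continuous non-reversible SDE \eqref{nonreversibleSDE} and let $P^d$ be the law of the piecewise-constant interpolation of the NSGLD iterates driven by the same Brownian motion. Girsanov's theorem gives
\[
\mathrm{KL}(P^d\Vert P^c) = \tfrac{\beta}{4}\,\mathbb{E}\!\int_0^{k\eta}\bigl\|A_J\bigl(\nabla F_{\mathbf{z}}(X(s))-g(X_{\lfloor s/\eta\rfloor},U_{\mathbf{z},\lfloor s/\eta\rfloor})\bigr)\bigr\|^2\,ds,
\]
which, by adding and subtracting $A_J\nabla F_{\mathbf{z}}(X_{\lfloor s/\eta\rfloor})$ and invoking $M$-smoothness (Assumption~\ref{A2}) and the gradient-noise bound (Assumption~\ref{A4}), reduces to a combination of an $O(\eta\|A_J\|^2)$ one-step Euler error and an $O(\delta\|A_J\|^2\,k\eta)$ variance term, each multiplied by moment bounds $\mathbb{E}\|X(s)\|^2$ coming from Lemma~\ref{lm:uniform-L2}. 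Applying the data-processing inequality to the terminal marginals and a weighted transport argument converts $\mathrm{KL}(\mu_{\mathbf{z},k}\Vert\nu_{\mathbf{z},k\eta})$ into a bound on $|\mathbb{E}_{\mu_{\mathbf{z},k}}F_{\mathbf{z}}-\mathbb{E}_{\nu_{\mathbf{z},k\eta}}F_{\mathbf{z}}|$: since $F_{\mathbf{z}}$ is $(M\|x\|+B)$-Lipschitz up to an affine growth bound, I can use the standard $\mathrm{KL}$-to-Lipschitz inequality
\[
\bigl|\mathbb{E}_{\mu}\varphi - \mathbb{E}_{\nu}\varphi\bigr|
\le \bigl(M\sqrt{\mathbb{E}_\nu\|X\|^2} + B\bigr)\sqrt{2\mathrm{KL}(\mu\Vert\nu)\bigl(1+\log \mathbb{E}_\nu e^{\alpha\|X\|}/\alpha\bigr)},
\]
so that exponential integrability of $X(t)$ under $\nu_{\mathbf{z},k\eta}$ (the new Lyapunov/exponential moment estimate advertised in the contribution list) produces precisely the extra $\sqrt{\log(2\log(1/\varepsilon)/|\lambda_{\mathbf{z},J}|)}$ factor that appears in \eqref{eq:I1}.

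The last step is purely arithmetic: substitute the choices \eqref{eq:eta} and \eqref{eq:k-eta}. The piece of $\eta$ bounded by $\varepsilon^4/[4(\log(1/\varepsilon))^2\|A_J\|^4]\cdot|\lambda_{\mathbf{z},J}|^2/|\lambda_{\mathbf{z},J=0}|^2$ is designed so that the $\sqrt{\eta}\|A_J\|\cdot\sqrt{k\eta}$ Euler term collapses to $\hat C_0\,\varepsilon/\sqrt{|\lambda_{\mathbf{z},J=0}|}$, while the gradient-noise term $\sqrt{\delta}\|A_J\|\sqrt{k\eta}$ becomes $\hat C_1\delta^{1/4}\|A_J\|\sqrt{2\log(1/\varepsilon)/|\lambda_{\mathbf{z},J}|}$ after the $\delta^{1/4}$ exponent emerges from taking the square root of $\mathrm{KL}$ in the transport inequality (noting $\delta\in[0,1)$, so $\sqrt{\delta}\le\delta^{1/4}$ tightens the final shape). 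Combining with the bound on the continuous piece yields $\mathcal{I}_0+\mathcal{I}_1$. The main obstacle I anticipate is tracking the explicit constants so that the step-size ceiling $m^2/[(m^2+8M^2)M\|A_J\|^2]$ is simultaneously compatible with the $L^2$ and exponential-moment Lyapunov estimates (the new Lyapunov function for the non-reversible drift must control $\|A_J\nabla F_{\mathbf{z}}\|^2$, not just $\|\nabla F_{\mathbf{z}}\|^2$); once that bookkeeping is in place, the remaining algebra is routine and gives the stated $\mathcal{I}_1(\mathbf{z},J,\varepsilon)$.
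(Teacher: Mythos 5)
Your proposal is correct and follows essentially the same route as the paper: a triangle inequality through $\nu_{\mathbf{z},k\eta}$, with the continuous-time term handled by Theorem~\ref{thm:nonreversible2gibbs} and the discretization term handled by a Girsanov relative-entropy estimate converted to a Wasserstein/expectation bound via the exponential-integrability (Bolley--Villani) argument — which is exactly the content of Lemma~\ref{lm:approximation} combined with Lemma~\ref{slm:W2-quadratic-bd} that the paper's proof simply cites. The only cosmetic difference is that you inline the proof of that diffusion-approximation lemma rather than invoking it as a black box.
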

In the next subsection, we will
show that this result combined with some basic properties of the equilibrium distribution $\pi_{\mathbf{z}}$ leads to performance guarantees for the empirical risk minimization.

\subsection{Performance Bound for the Empirical Risk Minimization}
Consider using the NSGLD algorithm in \eqref{NSGLD} to solve the empirical risk minimization problem given in \eqref{eqn:empiricalsum}.
The performance of the algorithm can be measured by the expected sub-optimiality: $\mathbb{E}_{X\sim\mu_{\mathbf{z},k}}F_{\mathbf{z}}(X) - \min_{x\in\mathbb{R}^d}F_{\mathbf{z}}(x)$. To obtain performance guarantees, 
in light of Corollary ~\ref{cor:NSGLD2gibbs}, one has to control the quantity
$\mathbb{E}_{X \sim \pi_{\mathbf{z}}}F_{\mathbf{z}}(X) - \min_{x\in\R^d} F_{\bz}(x)$,
which is a measure of how much the equilibrium distribution $\pi_\bz$ concentrates around a global minimizer of the empirical risk. {For finite $\beta$, \cite[Proposition 11]{RRT17} derives an explicit bound:}
	 \beq 
	\mathbb{E}_{X \sim \pi_{\mathbf{z}}}F_{\mathbf{z}}(X) - \min_{x\in\R^d} F_{\bz}(x)  \leq \mathcal{I}_2 := \frac{d}{2\beta}\log\left(\frac{eM}{m}\left(\frac{b\beta}{d}+1\right)\right).  \label{cor:const:I2}
	 \eeq
Hence we immediately obtain the following performance bound for the empirical risk minimization.

\begin{corollary}[Empirical risk minimization]\label{cor:ERM-bd}
Consider the iterates $\{X_k\}$ of the NSGLD algorithm in \eqref{NSGLD}.
Under the setting of Corollary~\ref{cor:NSGLD2gibbs}, let $\beta \geq 3/m$, for any given $\varepsilon>0$, we have
\begin{align*} 
 \mathbb{E}F_{\mathbf{z}}(X_k) - \min_{x\in\mathbb{R}^d}F_{\mathbf{z}}(x) 
\leq \mathcal{I}_0(\mathbf{z},J,\varepsilon)+\mathcal{I}_1(\mathbf{z}, J,\varepsilon)+ \mathcal{I}_2 \,, 
\end{align*}
provided that the step size $\eta$ satisfies \eqref{eq:eta}, and $k$ satisfies \eqref{eq:k-eta}. Here, $\mathcal{I}_0$ and $\mathcal{I}_1$ are given in Corollary~\ref{cor:NSGLD2gibbs}, and $\mathcal{I}_2$ is defined in \eqref{cor:const:I2}.
\end{corollary}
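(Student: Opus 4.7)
The plan is to decompose the suboptimality of the NSGLD iterate into two pieces by inserting the expected risk under the stationary distribution $\pi_{\mathbf{z}}$ as an intermediate quantity. Concretely, writing $\mathbb{E} F_{\mathbf{z}}(X_k) = \mathbb{E}_{X \sim \mu_{\mathbf{z},k}} F_{\mathbf{z}}(X)$ (since $\mu_{\mathbf{z},k}$ is by definition the law of $X_k$ conditional on the data $\mathbf{z}$, and the outer expectation is over the algorithmic randomness), I would add and subtract $\mathbb{E}_{X \sim \pi_{\mathbf{z}}} F_{\mathbf{z}}(X)$ to obtain
\begin{equation*}
\mathbb{E} F_{\mathbf{z}}(X_k) - \min_{x\in\mathbb{R}^d} F_{\mathbf{z}}(x)
= \Bigl(\mathbb{E}_{X \sim \mu_{\mathbf{z},k}} F_{\mathbf{z}}(X) - \mathbb{E}_{X \sim \pi_{\mathbf{z}}} F_{\mathbf{z}}(X)\Bigr)
+ \Bigl(\mathbb{E}_{X \sim \pi_{\mathbf{z}}} F_{\mathbf{z}}(X) - \min_{x\in\mathbb{R}^d} F_{\mathbf{z}}(x)\Bigr).
\end{equation*}

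For the first bracket, I would invoke Corollary~\ref{cor:NSGLD2gibbs} directly: under the stated step-size and iteration-count conditions \eqref{eq:eta} and \eqref{eq:k-eta}, its absolute value is at most $\mathcal{I}_0(\mathbf{z},J,\varepsilon) + \mathcal{I}_1(\mathbf{z},J,\varepsilon)$. In particular, dropping the absolute value only weakens the inequality, which is what we need.

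For the second bracket, I would quote the Gibbs-concentration estimate \eqref{cor:const:I2}, taken verbatim from Proposition~11 of \cite{RRT17}. This quantifies how well the stationary distribution $\pi_{\mathbf{z}} \propto e^{-\beta F_{\mathbf{z}}}$ concentrates around the global minimizer of $F_{\mathbf{z}}$; it depends only on the dissipativity constants $m,b$, the smoothness $M$, the dimension $d$, and the inverse temperature $\beta$, and crucially does not involve $J$, since $\pi_{\mathbf{z}}$ is the common equilibrium of the reversible and non-reversible Langevin diffusions. This yields the $\mathcal{I}_2$ term. Combining the two bounds gives the claim.

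Honestly, there is no real technical obstacle in this corollary itself: all the heavy lifting is already done in Theorem~\ref{thm:nonreversible2gibbs} (continuous-time convergence in expectation), Corollary~\ref{cor:NSGLD2gibbs} (discretization error), and the borrowed Gibbs concentration bound. The only thing worth checking carefully is that the assumptions needed to invoke the two ingredients are mutually compatible, namely that the hypothesis $\beta \geq 3/m$ and Assumptions~\ref{A1}--\ref{A5} suffice for both Corollary~\ref{cor:NSGLD2gibbs} and the RRT17 proposition, and that the step size and horizon conditions \eqref{eq:eta}--\eqref{eq:k-eta} used for the discretization bound are exactly those stated in the corollary.
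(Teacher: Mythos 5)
Your decomposition---inserting $\mathbb{E}_{X\sim\pi_{\mathbf{z}}}F_{\mathbf{z}}(X)$ as an intermediate term, bounding the first piece by $\mathcal{I}_0+\mathcal{I}_1$ via Corollary~\ref{cor:NSGLD2gibbs} and the second by $\mathcal{I}_2$ via the quoted bound \eqref{cor:const:I2} from \cite{RRT17}---is exactly the argument the paper intends when it says the corollary follows ``immediately.'' The proposal is correct and matches the paper's approach.
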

Based on this result one can also derive the performance bound for the population risk minimization in \eqref{eqn:populationrisk}. See Appendix~\ref{sec:p-r-m} for details. We use the notation $\tilde{\mathcal{O}}(\cdot), \tilde{{\Omega}}(\cdot)$ gives explicit dependence on the parameters $\beta, d, \lambda_{\mathbf{z}, J}, \delta$, but hides factors that depend polynomially on other parameters.
Our result in Corollary~\ref{cor:ERM-bd} (see  Appendix~\ref{sec:dependancy-para} for further details) suggests that for empirical risk minimizations, the performance bound of NSGLD is given by (ignoring the $\log \log (1/\varepsilon)$ term):
\begin{align}
\tilde{\mathcal{O}}\left( \hat C_{\mathbf{z},J} \cdot \varepsilon  +  \frac{\sqrt{\beta}(\beta+d)}{ \sqrt{|\lambda_{\mathbf{z},J=0} | } } \cdot \varepsilon
+ \frac{d\log(1+\beta)}{\beta} \right) \,,
\label{cmp:erm-nonrev}
\end{align}
after $\mathcal{K}_{J}$ iterations with
\begin{equation} \label{eq:iteratoins-KJ}
\mathcal{K}_{J}= \tilde{\Omega}\left( \frac{\sqrt{\beta}(\beta+d)}{ {|\lambda_{\mathbf{z},J} |} \, \varepsilon^4}\log^3 \left(\frac{1}{\varepsilon}\right) \cdot \frac{\Vert A_J\Vert^4 \lambda_{\mathbf{z},J=0}^2 }{\lambda_{\mathbf{z},J}^2} \right) \quad and \quad \eta \leq \frac{\varepsilon^4} {  4\left(\log(1/\varepsilon) \right)^2}  \frac{1}{\Vert A_J\Vert^4} \frac{ \lambda_{\mathbf{z},J}^2}{\lambda_{\mathbf{z},J=0}^2}  \,,
\end{equation}
when the gradient noise $\delta$ is set to be the same as the step size $\eta$.



\subsection{Discussion: NSGLD vs SGLD}\label{sec:comparison}
In this section we briefly discuss the comparison of the performance of NSGLD with that of SGLD (corresponding to $J=0$) in the context of empirical risk minimizations. Note that while adding a nonzero antisymetric matrix $J$ increases the rate of convergence of diffusions to the equilibrium (i.e. $|\lambda_{\mathbf{z},J}|> |\lambda_{\mathbf{z},J=0}|$ generically), it will also give rise to a larger discretization error and amplify the gradient noise if one runs NSGLD and SGLD with the same stepsize. See the experiments in Section~\ref{sec:simple}. Building on our theoretical results in previous sections, we give some further analysis below to show that NSGLD can outperformance SGLD when the matrix $J$ is properly chosen. 

As in \cite{RRT17} and \cite{xu2018global}, we define an almost empirical risk minimizer as a point which is within the ball of the global minimizer with radius $\tilde{\mathcal{O}}(d \log(1+\beta)/\beta)$ and we discuss the performance of NSGLD and SGLD in terms of \textit{gradient complexity}, i.e., the total number of stochastic gradients required to achieve an almost empirical risk minimizer. We consider the mini-batch setting, where at each iteration of NSGLD one samples uniformly with replacement a random i.i.d. mini batch of size $\ell$.
It is generally difficult to spell out the dependency of $\hat C_{\mathbf{z},J}$ in \eqref{cmp:erm-nonrev} on the matrix $J$ for nonconvex problems. On the other hand,
\cite{RRT17} showed that $\hat C_{\mathbf{z},J=0}= \tilde{\mathcal{O}}( 1/\sqrt{ |\lambda_{\mathbf{z},J=0} |} )$, where $\frac{1}{|\lambda_{\mathbf{z},J=0}|} = e^{\tilde{\mathcal{O}}(\beta+d)}$; see also \cite{BGK05}. Hence in the following discussion we will assume the first two terms in \eqref{cmp:erm-nonrev} are both of the order $\tilde{\mathcal{O}}( 1/\sqrt{|\lambda_{\mathbf{z},J=0}|} )$.



Following \cite{RRT17} and \cite{xu2018global}, we can infer from \eqref{cmp:erm-nonrev} and \eqref{eq:iteratoins-KJ} that the gradient complexity of NSGLD with anti-symmetric matrix $J$ is
\begin{equation}  \label{eq:gradient-comp}
\hat{ \mathcal{K}}_J:=\mathcal{K}_J \cdot \ell =\mathcal{K}_J/\eta=  \tilde{\Omega}\left( \frac{ \sqrt{\beta}(\beta+d)}{ {|\lambda_{\mathbf{z},J} |} \, \varepsilon^8}\log^5 \left(\frac{1}{\varepsilon}\right) \cdot \frac{\Vert A_J\Vert^8 \lambda_{\mathbf{z},J=0}^4 }{\lambda_{\mathbf{z},J}^4} \right).
\end{equation}
Hence to compare $\hat{ \mathcal{K}}_J$ with $\hat{ \mathcal{K}}_{J=0}$, we compare $ \frac{\Vert A_J\Vert^8 \lambda_{\mathbf{z},J=0}^4 }{|\lambda_{\mathbf{z},J}|^5}$ with $1/|\lambda_{\mathbf{z}, J=0}|$ (when $J=0$). To this end, we next consider the asymptotic setting with $\beta \rightarrow \infty$ and present formulas of $\lambda_{\mathbf{z},J}$.  

\textbf{Formulas of the spectral gaps $\lambda_{\mathbf{z}, J}$ and $\lambda_{\mathbf{z}, J=0}$ when $\beta \rightarrow \infty$.} 
Suppose $F_{\mathbf{z}}$ is a Morse function admitting finite number of local minima, where the Hessian of $F_{\mathbf{z}}$ are non-degenerate (i.e. invertible) at all stationary points \cite{MBM}. 
Assuming all the valleys of $F_{\mathbf{z}}$ have different depths, there is one saddle point connecting two local valleys or minima, and the Hessian at the saddle points has one negative eigenvalue with other eigenvalues positive, \cite[Theorem 1.2]{BGK05} showed that for the reversible SDE in \eqref{reversibleSDE}, as $\beta \rightarrow \infty$, the spectral gap is given by
\begin{equation}
|\lambda_{\mathbf{z},J=0}| = \frac{\mu^{*}(\sigma)}{2\pi}\sqrt{\frac{\det  \text{Hess}   F_{\mathbf{z}}(a)}{\lvert \det  \text{Hess}   F_{\mathbf{z}}(\sigma)\rvert}} e^{-\beta[F_{\mathbf{z}}(\sigma)-F_{\mathbf{z}} (a)]} \cdot\left[1+\mathcal{O}\left(\beta^{-1/2}\lvert \log (1/\beta)\rvert\right)\right]\,. \label{ex:rev-sp}
\end{equation} 
Here, $a$ is a local minimum of $F_{\mathbf{z}}$ with second deepest valley ($a$ is just the local, not the global, minimum of $F_{\mathbf{z}}$ if $F_{\mathbf{z}}$ only has two local minima), $\sigma$ is the saddle point connecting $a$ and the global minimum of $F_{\mathbf{z}}$, and   
$-\mu^{*}(\sigma)$ is the unique negative eigenvalue of the Hessian of $F_{\mathbf{z}}$ at the saddle point $\sigma.$ For precise definitions of these quantities, see \cite{BGK05}. For the non-reversible Langevin SDE in \eqref{nonreversibleSDE}, under similar assumptions on $F_{\mathbf{z}}$, \cite[Theorem 1.9]{PM19} showed that, 
\begin{equation}
|\lambda_{\mathbf{z},J} | = \frac{\mu_J^{*}(\sigma)}{2\pi}\sqrt{\frac{\det  \text{Hess}  F_{\mathbf{z}}(a)}{\lvert \det  \text{Hess}  F_{\mathbf{z}}(\sigma)\rvert}} e^{-\beta[F(\sigma)-F_{\mathbf{z}}(a)]} \cdot\left[1+\mathcal{O}\left(\beta^{-1/2}\right)\right] \,,  \label{ex:nonrev-sp}
\end{equation}
where $-\mu^{*}_J(\sigma)$ is the unique negative eigenvalue of $A_J \cdot \mathbb{L}^{\sigma}$, where $\mathbb{L}^{\sigma} = \text{Hess} F_{\mathbf{z}}(\sigma)$.

\textbf{Comparison of gradient complexities of NSGLD and SGLD.} 
We now compare the gradient complexity $\hat{ \mathcal{K}}_J$ of the NSGLD algorithm with $\hat{ \mathcal{K}}_{J=0}$ of the SGLD algorithm. 
It is clear from \eqref{eq:gradient-comp} that for a nonzero antisymmetrix matrix $J$, we have $\hat{ \mathcal{K}}_J< \hat{ \mathcal{K}}_{J=0}$ if 
 $ \frac{\Vert A_J\Vert^8 \lambda_{\mathbf{z},J=0}^4 }{|\lambda_{\mathbf{z},J}|^5} < \frac{1}{|\lambda_{\mathbf{z}, J=0}|}$.
From \eqref{ex:rev-sp} and \eqref{ex:nonrev-sp} and using $\Vert A_J\Vert^2 = 1 + \Vert J\Vert^2$, we obtain that
\begin{align} \label{eq:ratio-J}
\frac{\Vert A_J\Vert^8 \lambda_{\mathbf{z},J=0}^5 }{|\lambda_{\mathbf{z},J}|^5} & = \left(1+\Vert J\Vert^2\right)^4 \cdot \left( \frac{\mu^*(\sigma)}{ \mu_J^{*}(\sigma)} \right)^5.
\end{align}
We study when the quantity in \eqref{eq:ratio-J} is smaller than one so that NSGLD can outperform SGLD with $J=0$ in terms of gradient complexity.
Without loss of generality, we consider a diagonal Hessian matrix $\mathbb{L}^{\sigma}$ at the saddle point where $\mathbb{L}^{\sigma}=\text{diag}\{-1, \lambda_1, \lambda_2, \ldots, \lambda_{d-1} \}$ with $\lambda_i >0$ and $\mu^{*}(\sigma)=1$. The case of general symmetric Hessian matrix can be handled similarly using the fact that $QJQ^T$ remains to be anti-symmetic if $Q$ is an orthogonal matrix and $J$ is anti-symmetric.
We consider the anti-symmetric matrix $J$ has a block diagonal structure that allows explicit computations. Suppose the dimension $d$ is an even number, and we consider a block diagonal anti-symmetric matrix $J= \text{diag}\{M_1, M_2, \ldots, M_{d/2}\}$, where $M_i=  \begin{bmatrix}
        0  &  a_i \\
        -a_i & 0\\
        \end{bmatrix}$ and $a_i \in\mathbb{R}$. The case of $d$ is odd can be handled similarly by removing the last row and the last column of the $J$ matrix.
If we choose $a_i$ such that $a_1^2 \ge a_i^2$ for each $i=2, \ldots, d/2$,
one can readily verify that the quantity in \eqref{eq:ratio-J} is smaller than one if and only if
$\lambda_{1}>\frac{(1+a_{1}^{2})^{3/5}-(1+a_{1}^{2})^{-1/5}}{1-(1+a_{1}^{2})^{-1/5}}
= \left( 1+ (1+ a_1^2 )^{2/5} \right) \cdot \left( 1+ (1+ a_1^2 )^{1/5} \right)$.
Hence when $\lambda_1>4$, we can choose $a_1>0$ small so that $\hat{ \mathcal{K}}_J< \hat{ \mathcal{K}}_{J=0}$. Thus NSGLD can reduce the gradient complexity compared with SGLD when the anti-symmetric matrix $J$ is properly chosen.

\section{Numerical Experiments}\label{sec:simple}

In this section, we conduct several experiments to assess the performance of NSGLD algorithm and compare it with SGLD algorithm via three examples: a simple non-convex example in dimension two, Bayesian Independent Component Analysis and Neural Network models.

\textbf{A two-dimensional example.} We first demonstrate the performance of the NSGLD algorithm on a simple example, where the objective is a two-dimensional non-convex function given by
\begin{equation} \label{eq:ex1}
    f(x)=
    \begin{cases}
    \frac{1}{4}-\frac{\left\lVert x \right\rVert^2}{2}+\alpha x & \text{if $\left\lVert x \right\rVert \leq \frac{1}{2}$}\\
    \frac{1}{2}(\left\lVert x \right\rVert-1)^2+\alpha x & \text{if $\left\lVert x \right\rVert > \frac{1}{2}$}
    \end{cases}\,
    \quad \text{where}\ x\in \R^2,\ \alpha=(0.2,0.2).
\end{equation}
Since the function lives on $\R^2$, the $2\times 2$ anti-symmetric matrix $J$ must be of the form
    $ J=  \begin{bmatrix}
        0  &  \tau \\
        -\tau & 0\\
        \end{bmatrix}$, where $\tau\in\mathbb{R}$.
The function in \eqref{eq:ex1} is non-convex and has two minima. One is the local minimum $(\frac{1}{5},\frac{1}{5})$ and function value is 0.29. The other is the global minimum $(-\frac{\sqrt{2}}{2}-\frac{1}{5},-\frac{\sqrt{2}}{2}-\frac{1}{5})$ and minimal value is -0.3228. The contour plot of the function is given in Figure~\ref{fig:exp1_1} with two minima shown on the plot.
In the experiments, the initial point of the NSGLD algorithm is assigned to $(1,1)$ with corresponding function value $0.4858$, and this starting point is near the local minimum. We tuned the SGLD method and found the optimal step size is $1$ and $\beta=200$. We also used the same step size and $\beta$ in the NSGLD method. We compare the SGLD method and NSGLD method with different $\tau$ values. To see the expectation of the suboptimality, we use $50$ samples and calculate the average over these samples. 

The results are shown in Figure~\ref{fig:exp1_2}, which shows the expected function value of SGLD and NSGLD iterates with different $\tau$'s. We observe that NSGLD can outperform SGLD with proper choices of $\tau$, and one can tune $\tau$ to achieve faster convergence in this experiment. On the other hand, we also observe that $\tau$ can not be too big. In Figure~\ref{fig:exp1_2}, when $\tau=1.612$ the function will not converge to the global minimum; when $\tau$ increases further, the objective function will go to infinity.

\begin{figure}[t]
\centering
    \subfigure[Contour of function]{
    \includegraphics[width=0.35\columnwidth]{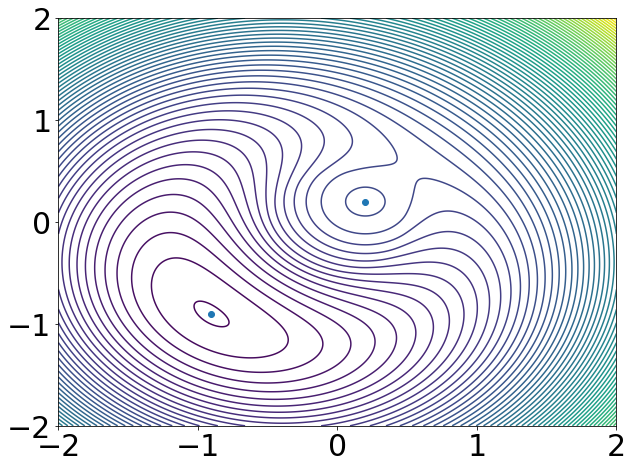}
    \label{fig:exp1_1}
    }
    \hfill
    \subfigure[Comparison]{
    \includegraphics[width=0.35\columnwidth]{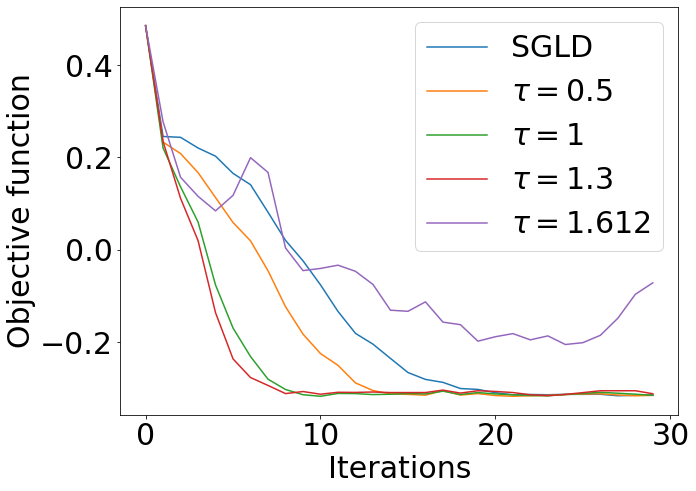}
    \label{fig:exp1_2}
    }
\caption{Performances of SGLD and NSGLD on the simple non-convex problem in \eqref{eq:ex1}.}
\label{fig:exp1}
\end{figure}

\textbf{Bayesian Independent Component Analysis.} The Bayesian ICA attempts to decompose a multivariate signal into independent non-Gaussian signals and arises commonly in machine learning applications \cite{HYVARINEN2000411,1058079}. In the following, we will briefly review the Bayesian ICA model and compare the performance of SGLD with NSGLD. Given the data set $\{x_i;i=1,2,...,m\}$ and $x_i \in \R ^n$, Bayesian ICA aims to recover the independent sources $s=Wx$, where $s\in \R^n$ and $W\in \R ^{n\times n}$. We assume that the distribution of each independent component source $s_i$ is given by the density $p_s(s_i)$. The joint distribution of the sources $s$ is given by $p(s)=\prod_{i=1}^{n} p_s(s_i)$. Then the log likelihood is given by:
$\ell (W)=\sum _{i=1}^{m} \left(\sum _{j=1}^n \log{g'(w_j^T x_i)}+\log{\left\lVert W \right\rVert}\right)$, where $w_j$ is the $j$-th column of matrix $W$.
The goal then becomes finding the optimal unmixing matrix $W$ which maximizes the log likelihood \cite{mackay1996maximum}. In our experiments, we used two datasets: the Iris plants dataset\protect\footnotemark[1] and the Diabetes dataset\protect\footnotemark[2]. The Iris plants dataset consists of 3 different types of irises? (Setosa, Versicolour, and Virginica) petal and sepal length. The Diabetes dataset consists of 10 baseline variables, age, sex, body mass index, average blood pressure, and six blood serum measurements for each diabetes patient. For these datasets, the Bayesian ICA model can extract a number of features from the original data and can improve subsequent tasks such as classification and regression \cite{huang2002efficient,wang2005iris,bang2011independent}. In our experiments, we let the distribution $s_i$ follow the sigmoid function, i.e. $p_s(s)=g'(s)$ where $g(s)=1/(1+\exp(-s))$. We choose the anti-symmetric matrix $J$ randomly according to
$J=(j_{m,n})_{1\leq m,n\leq d}$, 
where $j_{m,m}:=0$ for $1\leq m\leq d$,
$j_{m,n}\sim\mathcal{N}(0,\frac{\tau^{2}}{d^{2}})$
for $m<n$, and $j_{m,n}:=-j_{n,m}$ for $m>n$.

In order to compute the expectation of the suboptimality, we run both methods over 20 times with i.i.d. samples at every iteration and calculate the average over these runs. For both datasets, we chose a decaying stepsize of the form $a/(b+ct)$ for SGLD and tune the constants $a$, $b$ and $c$ to the dataset. We used the same stepsize for NSGLD and tuned the constant $\tau$ 
to the dataset as well. For the Iris plants dataset, the tuned parameters were $a=0.01$, $b=1$, $c=0.1$, $\beta=200$ and $\tau=1$ whereas for the Diabetes dataset we used the values $a=0.1$, $b=1$, $c=0.1$, $\beta=200$ and $\tau=7$.
The results are shown in Figure~\ref{fig:exp2_1} and Figure~\ref{fig:exp2_2}.
In both experiments, we can observe that the NSGLD algorithm converges faster than the SGLD method in the ICA task.

\footnotetext[1]{The dataset is available \url{https://archive.ics.uci.edu/ml/datasets/Iris}.}
\footnotetext[2]{The dataset is available \url{https://archive.ics.uci.edu/ml/datasets/diabetes}.}

\begin{figure}[t]
\centering
    \subfigure[Iris dataset]{
    \includegraphics[width=0.35\columnwidth]{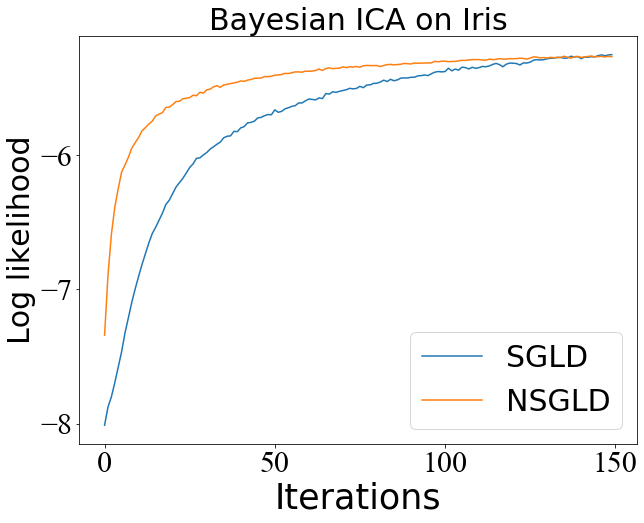}
    \label{fig:exp2_1}
    }
    \hfill
    \subfigure[Diabetes dataset]{
    \includegraphics[width=0.35\columnwidth]{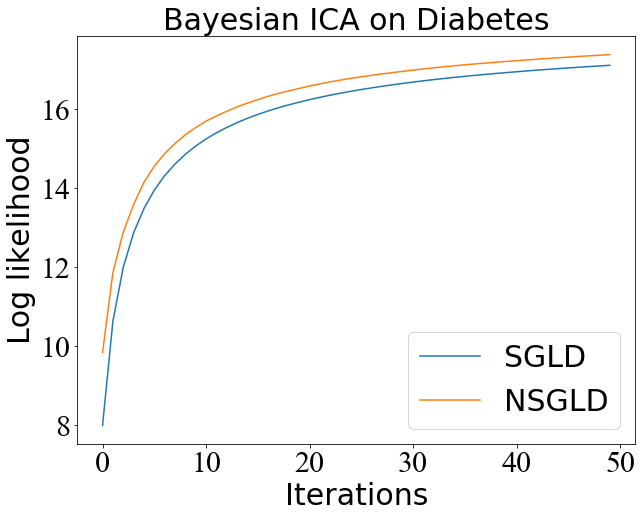}
    \label{fig:exp2_2}        
    }
    \hfill
\caption{SGLD and NSGLD on the Bayesian ICA.}
\label{fig:exp2}
\end{figure}

\textbf{Neural Network Model.}
\begin{figure}[t]
\centering
    \subfigure[Fully-connected NN]{
    \includegraphics[width=0.3\columnwidth]{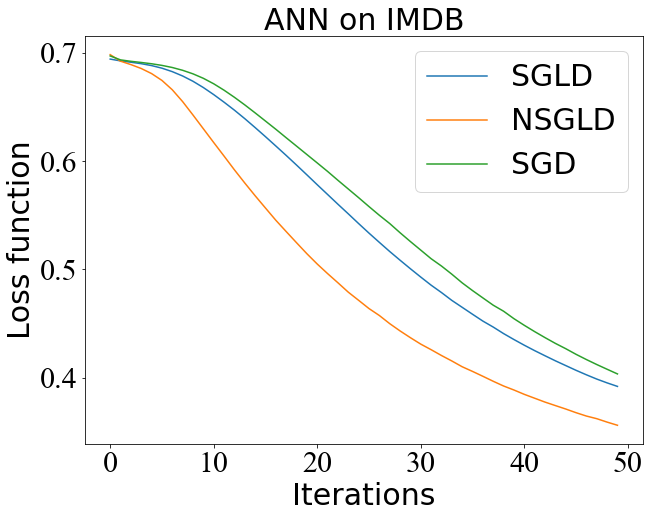}
    \label{fig:exp3_1}
    }
    \hfill
    \subfigure[LSTM]{
    \includegraphics[width=0.3\columnwidth]{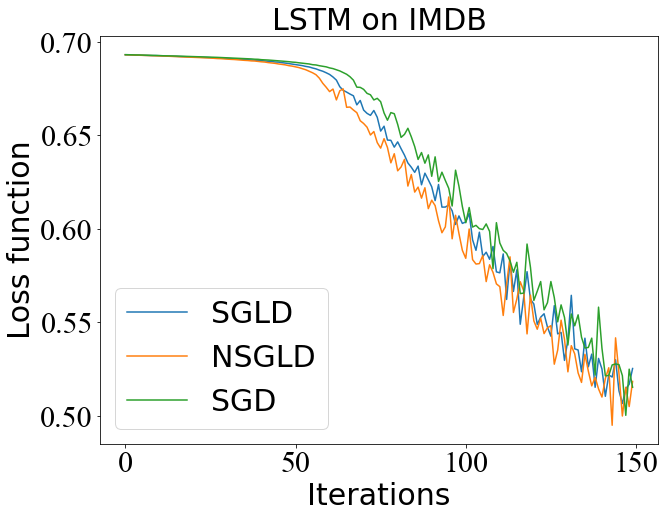}
    \label{fig:exp3_2}
    }
    \hfill
    \subfigure[CNN LSTM]{
    \includegraphics[width=0.3\columnwidth]{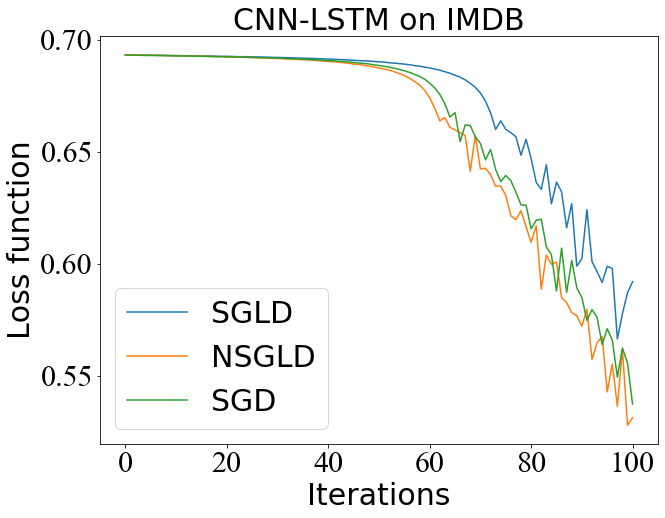}
    \label{fig:exp3_3}
    }
\caption{SGD, SGLD and NSGLD on the Neural Network model.}
\label{fig:exp3}
\end{figure}
In the next set of experiments, we focus on applying the methods on the Neural Network model. All the experiments are based on the IMDB dataset\protect\footnotemark[3]. The IMDB dataset contains 25,000 movies reviews, and reviews are labeled by sentiment (positive/negative). The purpose of the Neural Network model is to do the classification based on the IMDB dataset. We will test our NSGLD algorithm and compare it with stochastic gradient descent (SGD) and SGLD.

We test three Neural Network structures on this dataset. The first one is the Fully-connected Neural Network, which has one hidden layer, and the result is shown in Figure~\ref{fig:exp3_1}. The second one is the Long Short-Term Memory (LSTM) Neural Network, and the result is shown in Figure~\ref{fig:exp3_2}. The third one is the Convolutional Neural Network and Long Short-Term Memory (CNN LSTM) Neural Network, and the result is shown in Figure~\ref{fig:exp3_3}. In all these experiments, the step size is $0.1$, the batch size is $1000$ and $\beta=10^{6}$. We use the antisymmetric matrix $J$ 
with $\tau =0.5$ for Fully-connected Neural Network, $\tau=2$ for LSTM Network, and $\tau=0.1$ for CNN LSTM Network. We again observe that NSGLD can outperform SGLD and SGD in different model architectures.

\footnotetext[3]{The dataset is available \url{https://datasets.imdbws.com/}.}




\bibliographystyle{alpha}
\bibliography{langevin}



\pagebreak
\appendix


\section{Proofs of Theorem~\ref{thm:nonreversible2gibbs} and Corollary~\ref{cor:NSGLD2gibbs}} \label{sec:proof-main}
\subsection{Technical Lemmas} \label{sec:tech-lemmas}
We first state a few technical lemmas and corollaries that will be used in the proofs of Theorem~\ref{thm:nonreversible2gibbs} and Corollary~\ref{cor:NSGLD2gibbs}. Their proofs are deferred to the appendix.

To prove Corollary~\ref{cor:NSGLD2gibbs}, we need the following results.

\begin{lemma}[Uniform $L^{2}$ bounds on NSGLD  \cite{GGZ18-2} and non-reversible Langevin SDE] \label{lm:uniform-L2}
Under Assumptions~\ref{A1},~\ref{A2},~\ref{A3} and \ref{A5}. For any data set $\mathbf{z} \in \mathcal{Z}^n$,
\begin{equation}
\sup_{t>0}\mathbb{E}_{\mathbf{z}}\lVert X(t) \rVert^2 \leq \mathcal{C}_c :=\frac{3MR^2+3BR+3B+6A+3b\log3}{2m} + \frac{3b(M+B)}{m^2} + \frac{6M\beta^{-1}d(M+B)}{m^3} \,.
\label{L2-SDE}
\end{equation}
Moreover, for $\eta \leq \frac{m^2}{(m^2+ 8M^2)M\lVert A_J\rVert^2} \wedge 1 $, we have
\begin{align} 
\sup_{k>0}\mathbb{E}_{\mathbf{z}}\lVert X_k \rVert^2 
\leq \mathcal{C}_d:= &  \frac{3MR^2 + 6BR + 3B + 6A + 3b\log3}{2m} +\frac{6\delta(2bM^2 + B^2m)(M+B)}{m^4} \,\nonumber \\
& \qquad + \frac{12M\beta^{-1}d(M+B)}{m^3} + \frac{3b(M+B)}{m^2}   \,. \label{L2-NSGLD}
\end{align}
\end{lemma}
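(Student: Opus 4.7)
The plan is to adapt the Lyapunov analysis of \cite{RRT17} using a \emph{different} Lyapunov function, since their choice $V(x)=\|x\|^{2}$ leaves an uncontrollable residual $2\langle x, J\nabla F_{\mathbf{z}}(x)\rangle$ after applying It\^o to the non-reversible SDE. The key observation---responsible for the fact that $\mathcal{C}_c$ carries no $\|J\|$ dependence---is that the antisymmetric drift $J\nabla F_{\mathbf{z}}$ makes no contribution to $\mathcal{L}_{\mathbf{z},J}\phi$ whenever $\phi$ is a function of $F_{\mathbf{z}}$, because $\langle \nabla F_{\mathbf{z}}, J\nabla F_{\mathbf{z}}\rangle=0$ for antisymmetric $J$. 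Taking $V(x)=F_{\mathbf{z}}(x)$ therefore kills the troublesome cross term at no cost.

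For the continuous-time bound \eqref{L2-SDE}, I would apply It\^o's formula to $F_{\mathbf{z}}(X(t))$ to obtain
\begin{equation*}
dF_{\mathbf{z}}(X(t)) = \bigl(-\|\nabla F_{\mathbf{z}}(X(t))\|^{2} + \beta^{-1}\Delta F_{\mathbf{z}}(X(t))\bigr)\,dt + \sqrt{2\beta^{-1}}\langle \nabla F_{\mathbf{z}}(X(t)),dB(t)\rangle,
\end{equation*}
and use Assumption~\ref{A2} to bound $|\Delta F_{\mathbf{z}}|\leq Md$. The dissipativity Assumption~\ref{A3} combined with $M$-smoothness yields a Polyak--{\L}ojasiewicz-type lower bound of the form $\|\nabla F_{\mathbf{z}}(x)\|^{2}\geq c_{1}F_{\mathbf{z}}(x)-c_{2}$: Cauchy--Schwarz applied to dissipativity gives $\|\nabla F_{\mathbf{z}}(x)\|\geq (m/2)\|x\|$ whenever $\|x\|^{2}\geq 2b/m$, while $F_{\mathbf{z}}(x)\leq A+B\|x\|+(M/2)\|x\|^{2}$ by smoothness. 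Taking expectations and applying Gr\"onwall's inequality to $\mathbb{E}F_{\mathbf{z}}(X(t))$ produces a uniform-in-$t$ bound whose initial value $\mathbb{E}F_{\mathbf{z}}(X(0))\leq A+BR+MR^{2}/2$ (from Assumption~\ref{A5}) is the source of the $R$-, $A$-, and $B$-dependent numerator terms in $\mathcal{C}_{c}$. A reverse inequality of the form $F_{\mathbf{z}}(x)\geq (m/4)\|x\|^{2}-C$ then converts this into \eqref{L2-SDE}.

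For the discrete bound \eqref{L2-NSGLD}, following \cite{GGZ18-2}, I would run the same argument at the iterate level. A second-order Taylor expansion of $F_{\mathbf{z}}(X_{k+1})$ around $X_{k}$ with $M$-smooth remainder, taken conditionally on $\mathcal{F}_{k}=\sigma(X_{0},\ldots,X_{k})$, gives
\begin{equation*}
\mathbb{E}[F_{\mathbf{z}}(X_{k+1})\mid\mathcal{F}_{k}]\leq F_{\mathbf{z}}(X_{k}) - \eta\|\nabla F_{\mathbf{z}}(X_{k})\|^{2} + \tfrac{M}{2}\,\mathbb{E}[\|X_{k+1}-X_{k}\|^{2}\mid\mathcal{F}_{k}],
\end{equation*}
where the antisymmetric part of $A_{J}$ again contributes nothing to the linear term. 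Bounding $\mathbb{E}\|A_{J}g(X_{k},U_{\mathbf{z},k})\|^{2}\leq \|A_{J}\|^{2}\bigl(\|\nabla F_{\mathbf{z}}(X_{k})\|^{2}+2\delta(M^{2}\|X_{k}\|^{2}+B^{2})\bigr)$ via Assumption~\ref{A4}, the step-size constraint $\eta\leq m^{2}/[(m^{2}+8M^{2})M\|A_{J}\|^{2}]$ keeps the $O(\eta^{2})$ quadratic term below half of the PL-type dissipation, yielding a contractive recursion $\mathbb{E}F_{\mathbf{z}}(X_{k+1})\leq (1-c\eta)\mathbb{E}F_{\mathbf{z}}(X_{k})+C\eta$ that iterates to \eqref{L2-NSGLD}.

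The main obstacle I expect is the reverse estimate $F_{\mathbf{z}}(x)\gtrsim\|x\|^{2}$: dissipativity only lower bounds $\langle x,\nabla F_{\mathbf{z}}(x)\rangle$, and integrating this along a radial path from the origin creates a logarithmic singularity near $0$. I plan to handle it by splitting the integration at radius $R=\sqrt{b/m}$, controlling the near-origin contribution by smoothness, and absorbing the resulting $b\log(\|x\|/R)$ remainder into a quadratic term at the price of an additive constant---this bookkeeping is what ultimately produces the $b\log 3$ appearing in both $\mathcal{C}_{c}$ and $\mathcal{C}_{d}$.
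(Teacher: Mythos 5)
Your proposal is correct and follows essentially the same route as the paper: the paper's proof also takes $F_{\mathbf{z}}$ itself as the Lyapunov function (so that $\langle \nabla F_{\mathbf{z}}, J\nabla F_{\mathbf{z}}\rangle=0$ eliminates the antisymmetric contribution), uses the $M$-smooth descent inequality plus Assumption~\ref{A4} for the discrete recursion with the same step-size constraint, splits on $\lVert x\rVert \gtrless \sqrt{2b/m}$ to get the dissipation, and converts back to $\lVert x\rVert^2$ via the quadratic bounds of Lemma~\ref{slm:quadratic-bd} (whose lower bound $\frac{m}{3}\lVert x\rVert^2 - \frac{b}{2}\log 3 \le F_{\mathbf{z}}(x)$ is exactly the reverse estimate you flag as the main obstacle, and is obtained by the radial-integration argument you describe, imported from \cite{RRT17}). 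The only cosmetic difference is that the paper does not reprove the continuous-time bound \eqref{L2-SDE} at all but cites \cite{GGZ18-2} for it, whereas you sketch the It\^o/Gr\"onwall argument explicitly.
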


\begin{lemma}[Exponential integrability of non-reversible Langevin SDE]\label{lm:exponential}
If Assumptions~\ref{A1},~\ref{A2},~\ref{A3} and \ref{A5} hold, given  $\beta \geq 3/m$, for any $t\geq 0$, the exponential integrability of non-reversible SDE admits,
\begin{equation}
\mathbb{E}_{\mathbf{z}} \left[ e^{\lVert X(t) \rVert ^2} \right] \leq L_0 + L_1 \cdot t < \infty \,,
\end{equation}
where
\begin{align}
& L_0 := \exp\left(\frac{MR^2}{2}+BR+A + \frac{3b}{2m}\log 3\right) \label{lm:const:l0} \,, \\
& L_1:=\frac{(3m-9\beta^{-1})(B/2+A)+3b(M+B)}{2(M+B)} \nonumber \\
& \qquad\qquad + \frac{6\beta^{-1}Md-9b\beta^{-1}}{2m} \exp\left\{ \frac{3}{m}\left(\frac{B}{2}+A + \frac{M+B}{m-3\beta^{-1}}\left( b+\frac{\beta^{-1}(2Md-3b)}{m} \right)\right) \right\} \,. \label{lm:const:l1}
\end{align}
\end{lemma}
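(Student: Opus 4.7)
The plan is to apply It\^o's formula to a Lyapunov function of the form $V(x) = e^{c F_{\mathbf{z}}(x)}$ with $c = 3/m$, rather than the choice $V(x) = e^{\|x\|^2}$ used in \cite{RRT17} for the reversible case $J = 0$. The crucial advantage is that $\nabla V(x) = c V(x) \nabla F_{\mathbf{z}}(x)$ is parallel to $\nabla F_{\mathbf{z}}$, so in computing the infinitesimal generator $\mathcal{L}_{\mathbf{z},J}V$, the antisymmetric contribution satisfies $\nabla F_{\mathbf{z}}^T J \nabla F_{\mathbf{z}} = 0$ and drops out entirely. This is precisely why the constants $L_0$ and $L_1$ in the statement do not involve $\|J\|$ or $\|A_J\|$, in contrast with the uniform $L^2$ bound of Lemma~\ref{lm:uniform-L2}.

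First, I would derive
\[
\mathcal{L}_{\mathbf{z},J}V(x) = V(x)\bigl[c(\beta^{-1} c - 1)\|\nabla F_{\mathbf{z}}(x)\|^2 + \beta^{-1} c \,\Delta F_{\mathbf{z}}(x)\bigr].
\]
The choice $c = 3/m$ makes the coefficient of $\|\nabla F_{\mathbf{z}}\|^2$ equal to $-3(m\beta - 3)/(m^2\beta)$, which is nonpositive precisely when $\beta \geq 3/m$; this explains the numerical threshold in the hypothesis and accounts for the factor $m - 3\beta^{-1}$ appearing in the denominator of $L_1$. Next, using $\Delta F_{\mathbf{z}} \leq Md$ from Assumption~\ref{A2} together with dissipativity (Assumption~\ref{A3}), which via Cauchy--Schwarz yields $\|\nabla F_{\mathbf{z}}(x)\|^2 \geq m^2\|x\|^2/4$ once $\|x\| \geq \sqrt{2b/m}$, one shows that the negative $\|\nabla F_{\mathbf{z}}\|^2$ term dominates for large $\|x\|$, while on the complementary compact region $V(x)$ is bounded by some explicit $V_{\max}$. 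This yields a uniform estimate $\mathcal{L}_{\mathbf{z},J}V(x) \leq K$ with $K$ depending on $m, M, b, d, \beta$ but independent of $J$. Localizing via stopping times $\tau_n := \inf\{t : \|X(t)\| \geq n\}$ to handle the stochastic integral as a genuine martingale and then applying Fatou's lemma, integration of the generator inequality produces $\mathbb{E}_{\mathbf{z}}[V(X(t))] \leq V(X(0)) + K t$.

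Finally, the bound on $\mathbb{E}[V(X(t))]$ must be translated into one on $\mathbb{E}[e^{\|X(t)\|^2}]$. Integrating the dissipativity condition along the ray from $y/\|y\|$ to $y$ (using $g'(t)\, t \geq m t^2 \|y\|^2 - b$ for $g(t) = F_{\mathbf{z}}(t\, y/\|y\|)$ and the elementary upper bound $|F_{\mathbf{z}}(y/\|y\|)| \leq A + B + M/2$), one obtains a quadratic lower bound of the form $F_{\mathbf{z}}(y) \geq \tfrac{m}{2}\|y\|^2 - b\log(\|y\|\vee 1) - C_0$. Rearranging and optimizing in $\|y\|$ gives $\|y\|^2 \leq \tfrac{3}{m} F_{\mathbf{z}}(y) + C'$ for an explicit constant $C'$, whence $e^{\|y\|^2} \leq e^{C'} V(y)$. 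Combining with the previous step yields $\mathbb{E}_{\mathbf{z}}[e^{\|X(t)\|^2}] \leq e^{C'}\bigl(V(X(0)) + K t\bigr)$, which is the desired form $L_0 + L_1 t$; the $V(X(0))$ part gives the first three terms of $\log L_0$ since $F_{\mathbf{z}}(X(0)) \leq A + BR + MR^2/2$ on $\{\|X(0)\| \leq R\}$.

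The main technical obstacle will be the careful bookkeeping required to recover the \emph{explicit} forms of $L_0$ and $L_1$, not merely qualitative linear-in-$t$ growth. In particular, the logarithmic correction $(3b/2m)\log 3$ in $L_0$ emerges from optimizing the $b\log\|y\|$ term in the quadratic lower bound on $F_{\mathbf{z}}$, while the exponential factor in $L_1$ arises from $V_{\max}$ on the compact region where the drift of $V$ has not yet turned dissipative; tracking these sharply, together with the precise threshold $\|x\|^2 \geq 2b/m$ at which the $\|\nabla F_{\mathbf{z}}\|^2$ estimate activates, is the delicate part of the calculation.
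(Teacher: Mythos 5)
Your proposal follows essentially the same route as the paper's proof: the same Lyapunov function $e^{(3/m)F_{\mathbf{z}}}$, the same observation that antisymmetry of $J$ kills the non-reversible contribution to the generator, the same generator computation with $\Delta F_{\mathbf{z}}\leq Md$ and the dissipativity-based gradient lower bound, the same ``drift nonpositive outside a compact set, bounded inside'' structure, and the same quadratic bounds to pass between $e^{\|x\|^2}$ and $e^{(3/m)F_{\mathbf{z}}}$. The only cosmetic difference is that you justify the martingale property of the stochastic integral by localization and Fatou, whereas the paper verifies square-integrability directly via a preliminary exponential moment estimate; both are valid.
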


Before we state the next lemma, 
let us first introduce the definition of the 2-Wasserstein distance, 
which is a common choice measuring the distance between two probability measures. 
For any two probability measures $\mu$, $\nu$,
the 2-Wasserstein distance is defined as:
\begin{equation*}
\mathcal{W}_2\left( \mu,\nu \right) = \left( \inf_{U \sim \mu, V \sim \nu} \mathbb{E}\lVert U-V \rVert^{2} \right)^{\frac{1}{2}} \,,
\end{equation*}
where the infimum is taken over all random couplings of $U \sim \mu$ and $V \sim \nu$, 
with the marginal distributions being $\mu$ and $\nu$. 

\begin{lemma}[Diffusion approximation]\label{lm:approximation} 
Suppose Assumptions~\ref{A1}-\ref{A5} hold, let $\mu_{\mathbf{z},k}   $ be the probability law of $X_k $ in \eqref{NSGLD} and $ \nu_{\mathbf{z},k \eta}  $ be the probability law of $X(k\eta)$ in \eqref{nonreversibleSDE}. For any $ \eta \leq \frac{m^2}{(m^2+ 8M^2)M\lVert A_J\rVert^2}  \wedge 1 $ such that $k\eta \geq e $, the diffusion approximation under 2-Wasserstein metric is
\begin{equation} 
\mathcal{W}_2(\mu_{\mathbf{z},k},\nu_{\mathbf{z},k\eta}) \leq \left(\hat{C_0}\eta^{1/4}+\hat{C_1}\delta^{1/4}\right) \sqrt{k\eta}\sqrt{\log(k\eta)} \lVert A_J \rVert\,,
\end{equation}
where 
\begin{align}
\hat{C_0}:= \left(16\log\left(L_0+L_1\right) \left(C_0+\sqrt{C_0} \right)\right)^{1/2}\,, \label{lm:const:C0hat} \\
\hat{C_1}:=\left(16\log\left(L_0+L_1\right) \left(C_1+\sqrt{C_1} \right)\right)^{1/2}\,, \label{lm:const:C1hat}
\end{align}
with
\begin{equation} 
C_0:=2\beta M^2 \left( M^2 \mathcal{C}_d + B^2 + d\beta^{-1} \right), \qquad C_1:=(1+2M^2)\beta \left( M^2 \mathcal{C}_d +B^2 \right)\,, \label{lm:const:approximation}
\end{equation}
where $L_0, L_1$ are constants defined in Lemma~\ref{lm:exponential}, and $\mathcal{C}_d  $ is defined in \eqref{L2-NSGLD}.
\end{lemma}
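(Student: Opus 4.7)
My plan is to adapt the relative-entropy plus weighted-transport argument of Raginsky \emph{et al.} \cite{RRT17} to the non-reversible setting \eqref{nonreversibleSDE}. I view the NSGLD iterates as the time-$k\eta$ marginals of a piecewise-constant-drift continuous-time interpolation $\bar X(t)$; a Girsanov computation will bound the relative entropy between the path laws of $\bar X$ and $X$, after which a Bolley--Villani weighted transport inequality---with constant controlled via the exponential moment furnished by Lemma~\ref{lm:exponential}---will deliver the desired 2-Wasserstein bound on the terminal laws.

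Concretely, I would set $\bar X(j\eta)=X_j$ and evolve $d\bar X(t)=-A_J g(X_j,U_{\mathbf{z},j})\,dt+\sqrt{2\beta^{-1}}\,dB(t)$ on $[j\eta,(j+1)\eta]$. Girsanov's theorem yields
\begin{equation*}
D_{\mathrm{KL}}\!\left(\bar\mu^{[0,k\eta]}\,\|\,\nu_{\mathbf{z}}^{[0,k\eta]}\right)=\tfrac{\beta}{4}\sum_{j=0}^{k-1}\int_{j\eta}^{(j+1)\eta}\mathbb{E}\bigl\|A_J\bigl[g(X_j,U_{\mathbf{z},j})-\nabla F_{\mathbf{z}}(\bar X(t))\bigr]\bigr\|^{2}dt.
\end{equation*}
Splitting the integrand at $\nabla F_{\mathbf{z}}(X_j)$ produces a gradient-noise piece (bounded by Assumption~\ref{A4} together with the uniform $L^{2}$ bound in Lemma~\ref{lm:uniform-L2}) and a discretization piece (bounded by Assumption~\ref{A2} applied to the short-time increment estimate $\mathbb{E}\|\bar X(t)-X_j\|^{2}\le 2\|A_J\|^{2}\eta^{2}(M^{2}\mathcal{C}_d+B^{2})+4\beta^{-1}d\eta$). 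Using $\|A_J v\|^{2}\le\|A_J\|^{2}\|v\|^{2}$ and the stepsize constraint $\eta\|A_J\|^{2}\lesssim 1$ to replace $\|A_J\|^{2}\eta^{2}$ by $\eta$, and then invoking the data-processing inequality, I arrive at
\begin{equation*}
D_{\mathrm{KL}}(\mu_{\mathbf{z},k}\,\|\,\nu_{\mathbf{z},k\eta})\le \|A_J\|^{2}(k\eta)(C_0\eta+C_1\delta),
\end{equation*}
with $C_0,C_1$ as in \eqref{lm:const:approximation}.

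I would then apply Bolley--Villani in the form $\mathcal{W}_2(\mu,\nu)\le C_\nu\bigl[\sqrt{D_{\mathrm{KL}}}+D_{\mathrm{KL}}^{1/4}\bigr]$, where $C_\nu^{2}\lesssim 1+\log\mathbb{E}_\nu e^{\|X\|^{2}}$ after optimizing over the Orlicz parameter. Lemma~\ref{lm:exponential} yields $\mathbb{E}_{\nu_{\mathbf{z},k\eta}}e^{\|X\|^{2}}\le L_0+L_1(k\eta)$, so under $k\eta\ge e$ we get $C_\nu^{2}\lesssim \log(L_0+L_1)\,\log(k\eta)$. Substituting the KL estimate and using $\sqrt{\eta}\le\eta^{1/4}$, $\sqrt{\delta}\le\delta^{1/4}$ (for $\eta,\delta\le 1$), $(k\eta)^{1/4}\le\sqrt{k\eta}$, $\|A_J\|^{1/2}\le\|A_J\|$, and $\sqrt{a+b}\le\sqrt{a}+\sqrt{b}$ collapses $\sqrt{D_{\mathrm{KL}}}+D_{\mathrm{KL}}^{1/4}$ into a single expression of the form $\bigl[(\sqrt{C_0}+C_0^{1/4})\eta^{1/4}+(\sqrt{C_1}+C_1^{1/4})\delta^{1/4}\bigr]\|A_J\|\sqrt{k\eta}$, matching the claim with the $\hat C_0,\hat C_1$ of \eqref{lm:const:C0hat}--\eqref{lm:const:C1hat}.

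The main technical obstacle is twofold. First, on the Girsanov side, avoiding a spurious $\|A_J\|^{4}$ factor relies on the stepsize restriction $\eta\le m^{2}/[(m^{2}+8M^{2})M\|A_J\|^{2}]$, which is what lets me absorb $\|A_J\|^{2}\eta^{2}$ into $\eta$ in the short-time increment estimate; this is precisely why the admissible $\eta$ scales inversely with $\|A_J\|^{2}$ in the hypothesis. Second, and most importantly, the benign $\sqrt{\log(k\eta)}$ prefactor---rather than a polynomial-in-$k\eta$ blowup---hinges on the \emph{linear-in-$t$} growth of the exponential moment in Lemma~\ref{lm:exponential}, whose proof in the non-reversible setting requires a Lyapunov function different from that of \cite{RRT17}; this same sharpened exponential integrability is what delivers the improved discretization error even in the reversible case $J=0$.
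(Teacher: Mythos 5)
Your proposal is correct and follows essentially the same route as the paper: a piecewise-constant-drift interpolation of the NSGLD iterates, a Girsanov relative-entropy bound split into a discretization piece and a gradient-noise piece (controlled by the uniform $L^2$ bound of Lemma~\ref{lm:uniform-L2} and Assumption~\ref{A4}), followed by the Bolley--Villani weighted transport inequality with the exponential moment $L_0+L_1 t$ from Lemma~\ref{lm:exponential} supplying the $\sqrt{\log(k\eta)}$ factor. The only cosmetic differences are that the paper passes through Gy\"ongy's mimicking theorem to replace the interpolation by a Markov process before applying Girsanov (where you invoke data processing directly), and it keeps the constants in the form $(C_i+\sqrt{C_i})^{1/2}$ rather than $\sqrt{C_i}+C_i^{1/4}$; neither affects the result.
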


Lemma~\ref{lm:approximation} states that NSGLD recursion \eqref{NSGLD} tracks the
continuous-time non-reversible Langevin SDE \eqref{nonreversibleSDE} in 2-Wasserstein distance. This lemma 
is the key ingredient in the proof of Corollary~\ref{cor:NSGLD2gibbs}, and its proof relies on Lemmas~\ref{lm:uniform-L2} and ~\ref{lm:exponential}.

To prove Theorem~\ref{thm:nonreversible2gibbs}, we need the following three results.
\begin{lemma} [Uniform $L^4$ bound on non-reversible Langevin SDE] \label{lm:uniform-L4}
With Assumptions~\ref{A1}, ~\ref{A2}, ~\ref{A3} and \ref{A5}, we have
\begin{align}
\sup_{t>0}\mathbb{E} \lVert X(t) \rVert^4 \leq \mathcal{D}_c &:= \frac{9}{m^2}\left(\frac{M}{2}R^2+BR + A \right)^2 + \frac{9U+9b(M+B)\mathcal{C}_c}{m^2} \nonumber \\
& \qquad +  \frac{6M(M+B)^2}{m^3}\left(B+2B\sqrt{2b/m} +  \frac{2bM}{m} +  4A  \right) \beta^{-1}d \,, \label{lm:const:Dc}
\end{align}
where $\mathcal{C}_c$ is given in Lemma~\ref{lm:uniform-L2} and
\begin{align}
& U:= \frac{(B+2A)^2}{2} + \frac{18(M+B)^2}{m^2}\left(b+\beta^{-1}+\frac{2(M+B)\beta^{-1}}{m^2} \right)^2 \, \nonumber \\
& \qquad\qquad + \frac{24\beta^{-1}(2bM^2+mB^2)(M+B)^2}{m^4} + 2bB + 2A + b^2 \,. \label{lm:const:U}
\end{align}
\end{lemma}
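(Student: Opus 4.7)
The plan is to apply It\^{o}'s formula to the Lyapunov function $V(x) := F_{\mathbf{z}}(x)^{2}$ evaluated along the non-reversible Langevin SDE \eqref{nonreversibleSDE}, and then to convert the resulting uniform $L^{2}$ control on $F_{\mathbf{z}}(X(t))$ into the claimed $L^{4}$ control on $X(t)$ through a coercivity estimate on $F_{\mathbf{z}}$. The structural reason to use $V=F_{\mathbf{z}}^{2}$, and not the apparently natural choice $V(x)=\|x\|^{4}$, is that $\nabla V = 2 F_{\mathbf{z}}\,\nabla F_{\mathbf{z}}$ is parallel to $\nabla F_{\mathbf{z}}$, so antisymmetry of $J$ makes the non-reversible part of the drift drop out of the generator:
\begin{equation*}
\langle J\nabla F_{\mathbf{z}},\nabla V\rangle = 2F_{\mathbf{z}}\,\langle J\nabla F_{\mathbf{z}},\nabla F_{\mathbf{z}}\rangle = 0.
\end{equation*}
This is precisely what allows $\mathcal{D}_{c}$ to be $\|J\|$-free, exactly as for the $L^{2}$ bound in Lemma~\ref{lm:uniform-L2}; the choice $V(x)=\|x\|^{4}$ would route the non-reversibility through Cauchy--Schwarz and introduce a spurious $\|J\|$-dependence.

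A direct computation gives
\begin{equation*}
\mathcal{L}_{\mathbf{z},J} V(x) = -2F_{\mathbf{z}}(x)\|\nabla F_{\mathbf{z}}(x)\|^{2} + 2\beta^{-1}F_{\mathbf{z}}(x)\Delta F_{\mathbf{z}}(x) + 2\beta^{-1}\|\nabla F_{\mathbf{z}}(x)\|^{2}.
\end{equation*}
I will combine Assumption~\ref{A3}, through its Cauchy--Schwarz consequence $\|\nabla F_{\mathbf{z}}(x)\|^{2}\ge m^{2}\|x\|^{2}-2mb$, with Assumptions~\ref{A1}--\ref{A2} for the growth bounds $F_{\mathbf{z}}(x)\le A+B\|x\|+\tfrac{M}{2}\|x\|^{2}$, $\|\nabla F_{\mathbf{z}}(x)\|^{2}\le 2M^{2}\|x\|^{2}+2B^{2}$ and $\Delta F_{\mathbf{z}}\le Md$, plus Young's inequality on the cross terms, to produce a drift condition of the form
\begin{equation*}
\tfrac{d}{dt}\mathbb{E}[V(X(t))] \le -c\,\mathbb{E}[V(X(t))] + C_{1} + C_{2}\,\mathbb{E}\|X(t)\|^{2},
\end{equation*}
with explicit constants $c,C_{1},C_{2}$ depending only on $(A,B,m,M,b,d,\beta)$. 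The $\mathbb{E}\|X(t)\|^{2}$ term is then absorbed into an additive constant using the uniform $L^{2}$ bound $\mathcal{C}_{c}$ of Lemma~\ref{lm:uniform-L2}, and Gr\"{o}nwall together with Assumption~\ref{A5} and $F_{\mathbf{z}}(X(0))\le A+BR+\tfrac{M}{2}R^{2}$ gives a quantitative bound on $\sup_{t\ge 0}\mathbb{E}[F_{\mathbf{z}}(X(t))^{2}]$.

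For the final step, I would integrate the dissipativity identity $s\langle \hat{x},\nabla F_{\mathbf{z}}(s\hat{x})\rangle \ge ms^{2}-b$ radially from $s=\sqrt{b/m}$ to $s=\|x\|$ and use $F_{\mathbf{z}}\ge 0$; optimizing the resulting logarithmic correction at $\|x\|^{2}=3b/m$ yields the coercivity estimate
\begin{equation*}
F_{\mathbf{z}}(x) \ge \tfrac{m}{3}\|x\|^{2} - \tfrac{b\log 3}{2}, \qquad x\in\mathbb{R}^{d},
\end{equation*}
which accounts for the $b\log 3$-type constants already appearing inside $\mathcal{C}_{c}$. Rearranging and squaring,
\begin{equation*}
\|x\|^{4} \le \tfrac{9}{m^{2}}\bigl(F_{\mathbf{z}}(x)^{2} + (b\log 3)\,F_{\mathbf{z}}(x) + \tfrac{(b\log 3)^{2}}{4}\bigr),
\end{equation*}
which explains the leading prefactor $\tfrac{9}{m^{2}}=(3/m)^{2}$ in $\mathcal{D}_{c}$. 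Taking expectations, using the uniform bound on $\mathbb{E}[F_{\mathbf{z}}(X(t))^{2}]$ from the previous step, and controlling the linear cross term $\mathbb{E}[F_{\mathbf{z}}(X(t))]$ via $F_{\mathbf{z}}\le A+B\|x\|+\tfrac{M}{2}\|x\|^{2}$ and $\mathcal{C}_{c}$, yields the announced form of $\mathcal{D}_{c}$.

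The main obstacle, I expect, is extracting a strictly negative coefficient in front of $F_{\mathbf{z}}^{2}$ in the drift condition of Step~2. The lower bound $\|\nabla F_{\mathbf{z}}\|^{2}\ge m^{2}\|x\|^{2}-2mb$ degrades near $\|x\|^{2}\approx b/m$, and the competing term $2\beta^{-1}F_{\mathbf{z}}\Delta F_{\mathbf{z}}\le 2\beta^{-1}MdF_{\mathbf{z}}$ has to be absorbed by Young's inequality. The cleanest route is to combine $m^{2}\|x\|^{2}\ge \tfrac{2m^{2}}{M}(F_{\mathbf{z}}-A-B\|x\|)$ (from the smoothness upper bound on $F_{\mathbf{z}}$) with $F_{\mathbf{z}}\|\nabla F_{\mathbf{z}}\|^{2}\ge F_{\mathbf{z}}(m^{2}\|x\|^{2}-2mb)$ to produce $F_{\mathbf{z}}\|\nabla F_{\mathbf{z}}\|^{2}\ge \tfrac{2m^{2}}{M}F_{\mathbf{z}}^{2}$ up to terms linear in $F_{\mathbf{z}}$ or in $F_{\mathbf{z}}\|x\|$, and then tune Young's constants so that only a small fraction (of order $\tfrac{m^{2}}{M}$) of the $F_{\mathbf{z}}^{2}$ coefficient is consumed. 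The argument is standard Lyapunov bookkeeping, but matching the precise constant $U$ displayed in \eqref{lm:const:U} requires careful tracking of the Young's-inequality constants.
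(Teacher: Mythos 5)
Your proposal follows essentially the same route as the paper's proof: the Lyapunov function $F_{\mathbf{z}}^{2}$, the observation that antisymmetry of $J$ annihilates the non-reversible drift term in $\mathcal{L}_{\mathbf{z},J}F_{\mathbf{z}}^{2}$ (the paper computes the identical generator expression $-2(F_{\mathbf{z}}-\beta^{-1})\lVert\nabla F_{\mathbf{z}}\rVert^{2}+2\beta^{-1}F_{\mathbf{z}}\Delta F_{\mathbf{z}}$), a drift inequality $\mathcal{L}_{J}F_{\mathbf{z}}^{2}\leq-c_{1}F_{\mathbf{z}}^{2}+K_{1}$ obtained via dissipativity and case analysis on $\lVert x\rVert$, It\^{o}/Gr\"{o}nwall with a martingale check, and the quadratic lower bound on $F_{\mathbf{z}}$ to convert back to $\lVert x\rVert^{4}$ with the $\mathcal{C}_{c}$ term absorbing the residual $\mathbb{E}\lVert X(t)\rVert^{2}$. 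The only differences are minor bookkeeping choices (where exactly the $L^{2}$ bound is invoked and how the Young-type constants are distributed), so the plan is sound.
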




 Write $p_{\mathbf{z},J}(t,x,y)$ for the transition probability of the SDE in \eqref{nonreversibleSDE} from state $x$ to state $y$ in $t$ units of time.
\cite[Theorem 4]{HHS05} showed that
 there exists a locally bounded function $g_{\mathbf{z},J}(x)$ that may depend on $\mathbf{z}$ and $J$, such that 
\begin{equation} 
\int_{\mathbb{R}^d} \left| p_{\mathbf{z},J}(t,x,y) - \pi_{\mathbf{z}}(y) \right| dy \leq \text{constant} \cdot g_{\mathbf{z},J}(x) e^{\rho_{\mathbf{z},J} t}. \,\label{variationalnorm}
\end{equation} 
\cite{HHS05} did not specify the function $g_{\mathbf{z},J}(x)$ which comes from a local Harnack inequality (see e.g. \cite{Trudinger68}).
In the following Lemma~\ref{lm:acceleration}, we build upon Hwang \cite[Theorem 4]{HHS05} and discuss
the dependence of $g_{\mathbf{z},J}(x)$ on $\mathbf{z}$ and $J$
by applying a Harnack inequality with a more transparent Harnack constant in \cite{BRS2008}. We have the following result.

\begin{lemma}\label{lm:acceleration} 
Suppose Assumption~\ref{A1} and~\ref{A2} hold, then 
\begin{equation} 
\int_{\mathbb{R}^d} \left| p_{\mathbf{z},J}(t,x,y) - \pi_{\bz}(y) \right| dy \leq C_{\mathbf{z}, J}\cdot g_{\mathbf{z},J}(\Vert x\Vert) \cdot e^{\lambda_{\mathbf{z},J}t} \,,
\end{equation}
where $C_{\mathbf{z},J}$ is from the spectral gap inequality in \eqref{eq:L2-contraction-SG} and
\begin{equation}\label{g:z:J}
g_{\mathbf{z},J}(\Vert x\Vert)
=e^{|\lambda_{\mathbf{z},J}|\frac{27}{64}}
\left(\frac{16\Gamma(\frac{d}{2}+1)e^{\tilde{C}2^{-3d}\left(1+\beta+(\beta^{1/2}+\beta)\left(\frac{1}{4}\Vert A_{J}\Vert(M\Vert x\Vert+M+B)+\sqrt{\beta^{-1}d}\right)\right)^{2}}}
{\left(\frac{3}{2m\beta} \right)^{-d/2}e^{-\beta b(\log3 )/2}
e^{-\beta(M+B)\left( \lVert x \rVert^2 + \frac{B}{2} + A + \frac{1}{16} \right)}}+1\right) \,,
\end{equation}
where $\tilde{C}>0$ is some universal constant.
It follows that uniformly in $\mathbf{z}$, we have
\begin{equation} 
\int_{\mathbb{R}^d} \left| p_{\mathbf{z},J}(t,x,y) - \pi_{\bz}(y) \right| dy \leq C_{\ast, J}\cdot g_{J}(\Vert x\Vert) \cdot e^{\lambda_{\ast,J}t} \,,
\end{equation}
where $C_{\ast,J}:=\sup_{\mathbf{z}\in\mathcal{Z}^{n}}C_{\mathbf{z}, J}$ and
\begin{equation}
g_{J}(\Vert x\Vert)
=e^{|\lambda_{\ast,J}|\frac{27}{64}}
\left(\frac{16\Gamma(\frac{d}{2}+1)e^{\tilde{C}2^{-3d}\left(1+\beta+(\beta^{1/2}+\beta)\left(\frac{1}{4}\Vert A_{J}\Vert(M\Vert x\Vert+M+B)+\sqrt{\beta^{-1}d}\right)\right)^{2}}}
{\left(\frac{3}{2m\beta} \right)^{-d/2}e^{-\beta b(\log3 )/2}
e^{-\beta(M+B)\left( \lVert x \rVert^2 + \frac{B}{2} + A + \frac{1}{16} \right)}}+1\right) \,.
\end{equation}
\end{lemma}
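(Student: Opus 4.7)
The strategy is to combine the $L^{2}(\pi_{\mathbf{z}})$ spectral-gap contraction \eqref{eq:L2-contraction-SG} with a pointwise Harnack estimate on the transition density at a fixed small time $t_{0}=27/64$, with the explicit Harnack constant taken from Bogachev--R\"ockner--Stannat \cite{BRS2008}. The time interval is split as $[0,t]=[0,t_{0}]\cup[t_{0},t]$: on the short initial piece $[0,t_{0}]$ the Harnack inequality produces a quantitative pointwise bound on $p_{\mathbf{z},J}(t_{0},x,\cdot)$, while on $[t_{0},t]$ the spectral gap contracts exponentially. The prefactor $e^{|\lambda_{\mathbf{z},J}|\cdot 27/64}$ in $g_{\mathbf{z},J}$ is exactly the credit needed to absorb the ``lost'' factor $e^{\lambda_{\mathbf{z},J}t_{0}}$ so that the final bound reads $e^{\lambda_{\mathbf{z},J}t}$ for every $t\geq 0$.

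The first step is a duality plus semigroup argument. Writing $\int|p_{\mathbf{z},J}(t,x,y)-\pi_{\mathbf{z}}(y)|dy=\sup_{\|g\|_{\infty}\leq 1}(T(t)g(x)-\pi_{\mathbf{z}}(g))$, using the identity $T(t)g(x)-\pi_{\mathbf{z}}(g)=\int[T(t-t_{0})g-\pi_{\mathbf{z}}(g)](w)\,p_{\mathbf{z},J}(t_{0},x,w)\,dw$, and applying Cauchy--Schwarz in $L^{2}(\pi_{\mathbf{z}})$ gives
\begin{equation*}
|T(t)g(x)-\pi_{\mathbf{z}}(g)|\leq \|T(t-t_{0})g-\pi_{\mathbf{z}}(g)\|_{L^{2}(\pi_{\mathbf{z}})}\cdot\|p_{\mathbf{z},J}(t_{0},x,\cdot)/\pi_{\mathbf{z}}(\cdot)\|_{L^{2}(\pi_{\mathbf{z}})}.
\end{equation*}
Inserting \eqref{eq:L2-contraction-SG} and $\|g-\pi_{\mathbf{z}}(g)\|_{L^{2}(\pi_{\mathbf{z}})}\leq 2\|g\|_{\infty}\leq 2$ reduces the lemma to a quantitative estimate of the density-ratio norm $\|p_{\mathbf{z},J}(t_{0},x,\cdot)/\pi_{\mathbf{z}}(\cdot)\|_{L^{2}(\pi_{\mathbf{z}})}$. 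That norm is then controlled by bounding numerator and denominator separately on a unit-scale ball around $x$. For the denominator, the pointwise upper bound $F_{\mathbf{z}}(y)\leq \tfrac{M}{2}\|y\|^{2}+B\|y\|+A$ (from Assumptions~\ref{A1}--\ref{A2}), together with a Gaussian upper bound on the partition function $Z_{\mathbf{z}}$ obtained from $(m,b)$-dissipativity (Assumption~\ref{A3}), yields the factor $(3/(2m\beta))^{-d/2}e^{-\beta b(\log 3)/2}e^{-\beta(M+B)(\|x\|^{2}+B/2+A+1/16)}$ appearing in the denominator of \eqref{g:z:J}. For the numerator, the parabolic Harnack inequality of \cite{BRS2008} applied to $p_{\mathbf{z},J}(t_{0},x,\cdot)$, together with the drift bound $\|A_{J}\nabla F_{\mathbf{z}}(y)\|\leq \tfrac{1}{4}\|A_{J}\|(M\|x\|+M+B)$ valid on that ball, produces the double exponential $\exp(\tilde{C}2^{-3d}(1+\beta+(\beta^{1/2}+\beta)(\tfrac{1}{4}\|A_{J}\|(M\|x\|+M+B)+\sqrt{\beta^{-1}d}))^{2})$. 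The multiplier $16\Gamma(d/2+1)$ is the dimensional price of converting an $L^{2}(\pi_{\mathbf{z}})$ integral over a unit-scale ball into a sup-norm bound, and the additive ``$+1$'' absorbs the contribution of the ball's complement via the trivial bound $\|1\|_{L^{2}(\pi_{\mathbf{z}})}=1$.

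The main technical obstacle is unpacking the Harnack constant of \cite{BRS2008} with the correct dependence on $\beta$, $M$, $B$, $d$, $\|A_{J}\|$ and $\|x\|$: one must verify that its structural hypotheses are satisfied for the SDE \eqref{nonreversibleSDE} (local boundedness of the drift from $M$-smoothness is enough) and then track how the drift bound, the constant diffusion coefficient $\sqrt{2\beta^{-1}}$, the ball radius $1/4$, and the time $t_{0}=27/64$ combine to yield the precise polynomial-in-$(\|x\|,\beta^{1/2},\beta)$ expression inside the outer exponential $\tilde{C}2^{-3d}(\cdot)^{2}$. Once the per-sample bound is in hand, the uniform-in-$\mathbf{z}$ second half of the lemma is immediate: the constants $A,B,M,m,b$ in $g_{\mathbf{z},J}$ are independent of $\mathbf{z}$ by Assumptions~\ref{A1}--\ref{A3}, so every ingredient of the pointwise bound other than $\lambda_{\mathbf{z},J}$ and $C_{\mathbf{z},J}$ is already $\mathbf{z}$-free; replacing these two quantities by $\lambda_{\ast,J}:=\sup_{\mathbf{z}}\lambda_{\mathbf{z},J}$ and $C_{\ast,J}:=\sup_{\mathbf{z}}C_{\mathbf{z},J}$ yields the stated uniform form.
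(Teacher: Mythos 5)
Your proposal is correct and follows essentially the same route as the paper's proof: split time at $t_{0}=27/64$, apply the Bogachev--R\"ockner--Stannat Harnack inequality on the short piece to control the $L^{2}(\pi_{\mathbf{z}})$ norm of the density ratio $p_{\mathbf{z},J}(t_{0},x,\cdot)/\pi_{\mathbf{z}}$ (numerator via the Harnack constant with the drift bound, denominator via the quadratic bounds on $F_{\mathbf{z}}$ and a Gaussian estimate of the partition function), apply the spectral-gap contraction \eqref{eq:L2-contraction-SG} on the remaining piece, and take suprema over $\mathbf{z}$ at the end. The only difference is presentational: you phrase the Chapman--Kolmogorov splitting through duality with bounded test functions and the forward semigroup, whereas the paper works with the adjoint semigroup acting on the normalized density $p_{s,\mathbf{z},J}(x,\cdot)$ — these are the same argument.
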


\begin{lemma}\label{lm:Ktail}
Under Assumptions~\ref{A1}, ~\ref{A2}, ~\ref{A3} and \ref{A5}, taking $\beta>3/m$ and $ k\eta \geq 1 $.
For $x \in \mathbb{R}^d$, we have the following estimate:
\begin{align}
& \int_{\mathbb{R}^d} \int_{\lVert x \rVert > K}\lVert x \rVert^2\left| p_{\mathbf{z},J}(k \eta,w,x)-\pi_{\mathbf{z}}(x) \right| dx \nu_{\mathbf{z},0}(dw) 
\leq  2\mathcal{D}_{c}e^{\lambda_{\mathbf{z},J}k\eta/2}  \,,
\end{align}
where $K$ is defined as 
\begin{equation}\label{const:K}
K := e^{\lvert \lambda_{\mathbf{z},J} \rvert k\eta/4}\,, 
\end{equation}
and $\lambda_{\mathbf{z},J}<0$ is defined in \eqref{spectrum:z}, $\mathcal{D}_c$ is a constant in \eqref{lm:const:Dc}.
\end{lemma}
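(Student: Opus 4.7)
The plan is to replace $|p_{\mathbf{z},J}(k\eta,w,x)-\pi_{\mathbf{z}}(x)|$ by its pointwise upper bound $p_{\mathbf{z},J}(k\eta,w,x)+\pi_{\mathbf{z}}(x)$, so that the difficulty of tracking the $L^1$ decay is sidestepped and the problem reduces to a fourth-moment tail estimate, which Lemma~\ref{lm:uniform-L4} already provides. The key inequality I will use is the Markov-type bound $\mathbb{E}[Y^{2}\mathbf{1}_{\{Y>K\}}]\le \mathbb{E}[Y^{4}]/K^{2}$ applied with $K^{2}=e^{|\lambda_{\mathbf{z},J}|k\eta/2}$.

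First I would split the left-hand side as
\begin{equation*}
\mathbb{E}_{X(0)\sim\nu_{\mathbf{z},0}}\bigl[\,\|X(k\eta)\|^{2}\,\mathbf{1}_{\{\|X(k\eta)\|>K\}}\bigr]+\int_{\|x\|>K}\|x\|^{2}\pi_{\mathbf{z}}(dx).
\end{equation*}
Second, I would apply Markov's inequality to each summand. For the first summand, Assumption~\ref{A5} guarantees $X(0)\in B_{R}(0)$, so Lemma~\ref{lm:uniform-L4} yields $\mathbb{E}\|X(k\eta)\|^{4}\le\mathcal{D}_{c}$, and hence the term is at most $\mathcal{D}_{c}/K^{2}$. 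For the second summand, I need the analogous uniform $L^{4}$ bound under $\pi_{\mathbf{z}}$; here I would argue by Fatou together with ergodicity: starting from any $x_{0}\in B_{R}(0)$ the law $\nu_{\mathbf{z},t}$ of $X(t)$ converges (e.g.\ weakly, as implied by Lemma~\ref{lm:acceleration}) to $\pi_{\mathbf{z}}$, so $\int\|x\|^{4}\pi_{\mathbf{z}}(dx)\le\liminf_{t\to\infty}\mathbb{E}\|X(t)\|^{4}\le\mathcal{D}_{c}$ by Lemma~\ref{lm:uniform-L4}. The second summand is then also bounded by $\mathcal{D}_{c}/K^{2}$.

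Finally I would plug in $K=e^{|\lambda_{\mathbf{z},J}|k\eta/4}$, so that $K^{2}=e^{|\lambda_{\mathbf{z},J}|k\eta/2}$ and $1/K^{2}=e^{\lambda_{\mathbf{z},J}k\eta/2}$ since $\lambda_{\mathbf{z},J}<0$. Adding the two tail estimates gives the claimed $2\mathcal{D}_{c}e^{\lambda_{\mathbf{z},J}k\eta/2}$. The assumption $k\eta\ge 1$ is not used directly in my bound; it is consistent with the upstream hypotheses in Theorem~\ref{thm:nonreversible2gibbs} and only ensures that $K\ge e^{|\lambda_{\mathbf{z},J}|/4}>1$. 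The main obstacle I foresee is the transfer of the $L^{4}$ bound from the initial condition in $B_{R}(0)$ (where Lemma~\ref{lm:uniform-L4} applies directly) to the stationary measure $\pi_{\mathbf{z}}$; the Fatou step is the cleanest way, but it implicitly relies on having enough convergence of $\nu_{\mathbf{z},t}$ to $\pi_{\mathbf{z}}$, which is supplied by the variational-norm bound in Lemma~\ref{lm:acceleration}.
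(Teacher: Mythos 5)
Your proposal is correct and follows essentially the same route as the paper's proof: the bound $|p-\pi|\le p+\pi$, the split into a $\nu_{\mathbf{z},k\eta}$-tail and a $\pi_{\mathbf{z}}$-tail, Chebyshev/Markov with the fourth moment, and Fatou to transfer the uniform $L^{4}$ bound of Lemma~\ref{lm:uniform-L4} to the stationary measure. No substantive differences.
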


\subsection{Proof of Theorem~\ref{thm:nonreversible2gibbs}}
\begin{proof}
We can compute 
\begin{align} 
& \left|\mathbb{E}_{X \sim \nu_{\mathbf{z},k\eta}} F_{\mathbf{z}}(X) - \mathbb{E}_{X \sim \pi_{\mathbf{z}}} F_{\mathbf{z}}(X) \right|  = \left| \int_{\mathbb{R}^d}F_{\mathbf{z}}(x)\nu_{\mathbf{z},k\eta}(dx) - \int_{\mathbb{R}^d}F_{\mathbf{z}}(x)\pi_{\mathbf{z}}(dx) \right| 
\nonumber \\
& \quad = \left| \int_{\mathbb{R}^d}F_{\mathbf{z}}(x)\left(\int_{\mathbb{R}^d}p_{\mathbf{z},J}(t,w,x)\nu_{\mathbf{z},0}(dw) - \int_{\mathbb{R}^d}\nu_{\mathbf{z},0}(dw)\pi_{\mathbf{z}}(x) \right)dx\right| 
\nonumber \\
& \quad \leq \int_{\mathbb{R}^d}\int_{\mathbb{R}^d}F_{\mathbf{z}}(x)\left| p_{\mathbf{z},J}(t,w,x)-\pi_{\mathbf{z}}(x) \right| dx \nu_{\mathbf{z},0}(dw) \,,
\end{align}
where in the last inequality we have used the Fubini's Theorem. From the result of the quadratic bound \eqref{slm:objfunc-bd} for the function $F_{\mathbf{z}}$ in Lemma~\ref{slm:quadratic-bd}, we obtain
\begin{align}
& \int_{\mathbb{R}^d}\int_{\mathbb{R}^d}F_{\mathbf{z}}(x)\left| p_{\mathbf{z},J}(t,w,x)-\pi_{\mathbf{z}}(x) \right| dx\nu_{\mathbf{z},0}(dw) 
\nonumber \\
&\leq \int_{\mathbb{R}^d}\nu_{\mathbf{z},0}(w)\int_{\mathbb{R}^d}\left( \frac{M+B}{2}\lVert x \rVert^2 + \frac{B}{2} + A \right)\left| p_{\mathbf{z},J}(t,w,x)-\pi_{\mathbf{z}}(x) \right| dxdw
\nonumber \\
& = \frac{M+B}{2}\int_{\mathbb{R}^d}\int_{\mathbb{R}^d}\lVert x \rVert^2\left| p_{\mathbf{z},J}(t,w,x)-\pi_{\mathbf{z}}(x) \right| dx\nu_{\mathbf{z},0}(dw) 
\nonumber \\
& \quad\quad + \left( \frac{B}{2}+A \right)\int_{\mathbb{R}^d}\int_{\mathbb{R}^d}\left| p_{\mathbf{z},J}(t,w,x)-\pi_{\mathbf{z}}(x) \right| dx\nu_{\mathbf{z},0}(dw)\,. \label{pf:eqn:NSGLD2gibbs}
\end{align}
To bound the first term, we use the constant $K>0$ defined in \eqref{const:K} to break the integral into two parts and consider the bounds for each term,
\begin{align}
& \int_{\mathbb{R}^d}  \int_{\mathbb{R}^d} \lVert x \rVert^2 \left| p_{\mathbf{z},J}(t,w,x)-\pi_{\mathbf{z}}(x) \right| dx  \nu_{\mathbf{z},0}(dw) \nonumber \\
 & =  \int_{\mathbb{R}^d}  \int_{\lVert x \rVert \leq K} \lVert x \rVert^2 \left| p_{\mathbf{z},J}(t,w,x)-\pi_{\mathbf{z}}(x) \right| dx  \nu_{\mathbf{z},0}(dw) 
 \nonumber
 \\
 &\qquad\qquad\qquad\qquad\qquad+  \int_{\mathbb{R}^d}  \int_{\lVert x \rVert > K} \lVert x \rVert^2 \left| p_{\mathbf{z},J}(t,w,x)-\pi_{\mathbf{z}}(x) \right| dx  \nu_{\mathbf{z},0}(dw)\,.
\end{align}
By Lemma~\ref{lm:acceleration} and $K = e^{\lvert \lambda_{\mathbf{z},J} \rvert k\eta/4}$, 
we have
\begin{align}
& \int_{\mathbb{R}^d}  \int_{\lVert x \rVert \leq K} \lVert x \rVert^2 \left| p_{\mathbf{z},J}(k\eta,w,x)-\pi_{\mathbf{z}}(x) \right| dx  \nu_{\mathbf{z},0}(dw) \, \nonumber \\
& \leq \int_{\mathbb{R}^d} K^2 C_{\mathbf{z},J}\cdot g_{\mathbf{z},J}(\Vert w\Vert)\cdot e^{\lambda_{\mathbf{z},J} k\eta} \nu_{\mathbf{z},0}(dw) =e^{ \lambda_{\mathbf{z},J}  k\eta/2} C_{\mathbf{z},J}\int_{\lVert w \rVert \leq R}g_{\mathbf{z},J}(\Vert w\Vert)\nu_{\mathbf{z},0}(dw),  \label{pf:eqn:Kbound}
\end{align} 
where $\nu_{\mathbf{z},0}$ is supported on an Euclidean ball with radius $R$ by Assumption~\ref{A5}. 
The definition of $g_{\mathbf{z},J}(\Vert x\Vert)$ implies that it is increasing in $\Vert x\Vert$.  
It follows from \eqref{pf:eqn:Kbound} that
\begin{equation*} 
\int_{\mathbb{R}^d}  \int_{\lVert x \rVert \leq K} \lVert x \rVert^2 \left| p_{\mathbf{z},J}(k\eta,w,x)-\pi_{\mathbf{z}}(x) \right| dx  \nu_{\mathbf{z},0}(dw)
\leq
\hat{C}_{\mathbf{z},J}e^{\lambda_{\mathbf{z},J}k\eta/2},
\end{equation*}
where 
\begin{equation}\label{hat:C:z:J}
    \hat{C}_{\mathbf{z},J}:=C_{\mathbf{z},J}g_{\mathbf{z},J}(R),
\end{equation} 
with $g_{\mathbf{z},J}$ function defined in~\eqref{g:z:J} from  Lemma~\ref{lm:acceleration} and ${C}_{\mathbf{z},J}$ defined in  \eqref{eq:L2-contraction-SG}.
In addition, Lemma~\ref{lm:Ktail} implies:
\begin{align}
& \int_{\mathbb{R}^d} \int_{\lVert x \rVert > K}\lVert x \rVert^2\left| p_{\mathbf{z},J}(t,w,x)-\pi_{\mathbf{z}}(x) \right| dx \nu_{\mathbf{z},0}(dw) \leq
2\mathcal{D}_c  e^{\lambda_{\mathbf{z},J}k\eta/2}  \,. \label{pf:eqn:Ktail}
\end{align}
As a result, the first term in \eqref{pf:eqn:NSGLD2gibbs} is bounded by  \eqref{pf:eqn:Kbound} and \eqref{pf:eqn:Ktail}.

To bound the second term in \eqref{pf:eqn:NSGLD2gibbs}, we apply Lemma~\ref{lm:acceleration} directly with $\nu_{\mathbf{z},0}$ is supported on $\lVert X(0) \rVert \leq R$ by Assumption~\ref{A5} 
\begin{align}
\int_{\mathbb{R}^d}\int_{\mathbb{R}^d}\left| p_{\mathbf{z},J}(t,w,x)-\pi_{\mathbf{z}}(x) \right| dx\nu_{\mathbf{z},0}(dw) 
\leq C_{\mathbf{z},J}\cdot g_{\mathbf{z},J}(R)e^{\lambda_{\mathbf{z},J} k\eta}
=\tilde{C}_{\mathbf{z},J}e^{\lambda_{\mathbf{z},J} k\eta}\,. \label{pf:eqn:gJ-integration}
\end{align}
Hence, we infer from \eqref{pf:eqn:Kbound},~\eqref{pf:eqn:Ktail} and \eqref{pf:eqn:gJ-integration} to get the bound in \eqref{pf:eqn:NSGLD2gibbs} that
\begin{align}
&\left|\mathbb{E}_{X \sim \nu_{\mathbf{z},k\eta}} F_{\mathbf{z}}(X) - \mathbb{E}_{X \sim \pi_{\mathbf{z}}} F_{\mathbf{z}}(X) \right|\nonumber
\\
&\leq  \int_{\mathbb{R}^d}  \int_{\mathbb{R}^d} F_{\mathbf{z}}(x) \left| p_{\mathbf{z},J}(k \eta,w,x)-\pi_{\mathbf{z}}(x) \right| dx  \nu_{\mathbf{z},0}(dw) \, \nonumber \\
&\leq \frac{M+B}{2}\left( \hat{C}_{\mathbf{z},J}+ 2\mathcal{D}_c\right) e^{\lambda_{\mathbf{z},J}k\eta/2}  + \left( \frac{B}{2}+A \right)\cdot \hat{C}_{\mathbf{z},J} \, e^{\lambda_{\mathbf{z},J} k\eta}  \, \nonumber \\
&<\left[ \hat{C}_{\mathbf{z},J}\left( \frac{M+B}{2} +\frac{B}{2}+A\right) + (M+B)\mathcal{D}_c\right]e^{\lambda_{\mathbf{z},J}k\eta/2} \,,
\end{align}
where we used the condition $ k\eta \geq 1$ with $\lambda_{\mathbf{z},J}<0 $, which implies $ e^{\lambda_{\mathbf{z},J}k\eta} < e^{\lambda_{\mathbf{z},J}k\eta/2}\, $, 
that was used to infer the strict inequality above. Therefore, for $\beta \geq 3/m$, we obtain
\begin{align}
& \left|\mathbb{E}_{X \sim \nu_{\mathbf{z},k\eta}} F_{\mathbf{z}}(X) - \mathbb{E}_{X \sim \pi_{\mathbf{z}}} F_{\mathbf{z}}(X) \right| \leq  \mathcal{I}_{0}(\mathbf{z},J,\varepsilon)\,, 
\end{align}
with any given $\varepsilon > 0$, 
\begin{align}
& \mathcal{I}_{0}(\mathbf{z},J,\varepsilon) : = \left[ \hat{C}_{\mathbf{z},J} \left( \frac{M+B}{2} +\frac{B}{2}+A\right) + (M+B)\mathcal{D}_c\right] \cdot \varepsilon \,,
\end{align}
provided that 
\begin{equation*}
k\eta \geq \max \left\{ \frac{2}{|\lambda_{\mathbf{z},J}|}\log\left(\frac{1}{\varepsilon}\right) \,, 1 \right\} \,.
\end{equation*}
The proof is complete.
\end{proof}
\subsection{Proof of Corollary~\ref{cor:NSGLD2gibbs}}
\begin{proof}
Recall the two probability measures $\mu_{\mathbf{z},k}=\mathcal{L}\left( X_k \bigl| \mathcal{Z}=\mathbf{z} \right)$ and $\nu_{\mathbf{z},k\eta}=\mathcal{L}\left( X(t)\bigl| \mathcal{Z}=\mathbf{z} \right).$ Then we can use the triangular inequality and obtain
\begin{align}
& \left|\mathbb{E}_{X \sim \mu_{\mathbf{z},k}} F_{\mathbf{z}}(X) - \mathbb{E}_{X \sim \pi_{\mathbf{z}}} F_{\mathbf{z}}(X) \right| \nonumber \\
& \quad \leq \left|\mathbb{E}_{X \sim \mu_{\mathbf{z},k}} F_{\mathbf{z}}(X) - \mathbb{E}_{X \sim \nu_{\mathbf{z},k\eta}} F_{\mathbf{z}}(X) \right|+\left|\mathbb{E}_{X \sim \nu_{\mathbf{z},k\eta}} F_{\mathbf{z}}(X) - \mathbb{E}_{X \sim \pi_{\mathbf{z}}} F_{\mathbf{z}}(X) \right|\,. \label{pf:cor:decomposition}
\end{align}
First, we consider the first term, inferring from the 2-Wasserstein continuity for functions of quadratic growth in Lemma~\ref{slm:W2-quadratic-bd},
\begin{align}
& \left|\mathbb{E}_{X \sim \mu_{\mathbf{z},k}} F_{\mathbf{z}}(X) - \mathbb{E}_{X \sim \nu_{\mathbf{z},k\eta}} F_{\mathbf{z}}(X) \right| \nonumber \\
& \quad =  \left| \int_{\mathbb{R}^d}F_{\mathbf{z}}(x)\mu_{\mathbf{z},k}(dx) - \int_{\mathbb{R}^d}F_{\mathbf{z}}(x)\nu_{\mathbf{z},k\eta}(dx) \right| \leq \left(M\sigma+B \right)\mathcal{W}_2(\mu_{\mathbf{z},k},\nu_{\mathbf{z},k\eta})\,,
\end{align}
where the constant $\sigma = \sqrt{\mathcal{C}_d}$ with $\mathcal{C}_d$ defined in Lemma~\ref{lm:uniform-L2}. Applying the diffusion approximation in Lemma~\ref{lm:approximation}, we have
 \begin{align}
& \left|\mathbb{E}_{X \sim \mu_{\mathbf{z},k}} F_{\mathbf{z}}(X) - \mathbb{E}_{X \sim \nu_{\mathbf{z},k\eta}} F_{\mathbf{z}}(X) \right| \nonumber \\
& \qquad \leq \left(M\sqrt{\mathcal{C}_d}+B \right)(\hat{C_0}\eta^{1/4}+\hat{C_1}\delta^{1/4})\sqrt{k\eta}\sqrt{\log(k\eta)}\lVert A_J \rVert \,. \label{pf:cor:approximation-bd}
\end{align}
Next, we consider the upper bound term in the above inequality where it requires
\begin{equation}
k\eta = \frac{2}{|\lambda_{\mathbf{z},J}|}\log\left( 1/\varepsilon\right) \geq e \,, 
\end{equation}
By choosing
$$
\eta \leq \min\left\{1, \frac{m^2}{(m^2+ 8M^2)M\lVert A_J\rVert^2},\frac{\varepsilon^4}{4(\log(1/\varepsilon))^2\lVert A_J \rVert^4} \frac{\lvert \lambda_{\mathbf{z},J} \rvert ^2}{\lvert \lambda_{\mathbf{z},J=0}\rvert ^2} \right\} \,.
$$
Then we get
\begin{align*}
& \left|\mathbb{E}_{X \sim \mu_{\mathbf{z}}} F_{\mathbf{z}}(X) - \mathbb{E}_{X \sim \nu_{\mathbf{z}}} F_{\mathbf{z}}(X) \right|   \nonumber \\
& \leq \mathcal{I}_1(\mathbf{z},J,\varepsilon)
\\
&= \left(M\sqrt{\mathcal{C}_d}+B \right)\left( \hat{C_0} \frac{\varepsilon}{\sqrt{\lvert \lambda_{\mathbf{z},J=0}\rvert}}+ \hat{C_1}\delta^{1/4}\sqrt{\frac{2\log(1/\varepsilon)}{\lvert \lambda_{\mathbf{z},J} \rvert}} \lVert A_J \rVert \right) \sqrt{\log\left( \frac{2\log(1/\varepsilon)}{\lvert \lambda_{\mathbf{z},J} \rvert} \right)} \,. 
\end{align*}
Next, the upper bound for the second term in \eqref{pf:cor:decomposition} can be found in Theorem~\ref{thm:nonreversible2gibbs} as the following
$$
\left|\mathbb{E}_{X \sim \nu_{\mathbf{z},k\eta}} F_{\mathbf{z}}(X) - \mathbb{E}_{X \sim \pi_{\mathbf{z}}} F_{\mathbf{z}}(X) \right| \leq \mathcal{I}_{0}(\mathbf{z},J,\varepsilon) \,,
$$
Therefore, 
inferring from \eqref{pf:cor:decomposition},
\begin{equation*}
\left|\mathbb{E}_{X \sim \mu_{\mathbf{z},k}} F_{\mathbf{z}}(X) - \mathbb{E}_{X \sim \pi_{\mathbf{z}}} F_{\mathbf{z}}(X) \right| \leq \mathcal{I}_{0}(\mathbf{z},J,\varepsilon) + \mathcal{I}_{1}(\mathbf{z},J,\varepsilon)\,.
\end{equation*}
The proof is complete.
\end{proof}

\section{Proofs of technical lemmas in Appendix~\ref{sec:tech-lemmas}}

\subsection{Proof of Lemma~\ref{lm:uniform-L2}}
\begin{proof}
The uniform $L^2$ bound on non-reversible Langevin SDE follows \cite{GGZ18-2}, and we will prove uniform $L^2$ bound on NSGLD algorithm \eqref{NSGLD}. Recall the dynamics for NSGLD algorithm follows:
\begin{align}
X_{k+1} = X_k - \eta \, A_J \, g(X_k,U_{\mathbf{z},k}) + \sqrt{2\eta\beta^{-1}}\xi_k \,,
\end{align}
with stochastic gradient $g(x,U_{\mathbf{z},k})$ which is a conditionally unbiased estimator for $F_{\mathbf{z}}(x)$, 
$$
\mathbb{E}\left[g(x,U_{\mathbf{z},k})\right] = \nabla F_{\mathbf{z}}(x)\,, \qquad x \in \mathbb{R}^d \,.
$$
We have a quadratic bound in  Lemma~\ref{slm:quadratic-bd}:
$$
\lVert x \rVert^2 \leq \frac{3}{m} F_{\mathbf{z}}(x) + \frac{3b}{2m}\log 3 \,.
$$
Our aim is to find an uniform bound for $\mathbb{E}F_{\mathbf{z}}(X_{k})$. Inferring the proof in \cite[Lemma 30]{GGZ18-1}, suppose we can establish
\begin{equation}
\frac{\mathbb{E}F_{\mathbf{z}}(X_{k+1})-\mathbb{E}F_{\mathbf{z}}(X_{k})}{\eta} \leq -\varepsilon \mathbb{E}F_{\mathbf{z}}(X_k)+b\varepsilon \,,
\end{equation}
uniformly for small $\eta$ and $\varepsilon$, $b$ are positive constants which are independent of $\eta$, then we have
\begin{equation} \label{expectation:bd}
\mathbb{E}F_{\mathbf{z}}(X_{k+1}) \leq \mathbb{E}F_{\mathbf{z}}(X_0)+ b\,.
\end{equation}
Note that $\nabla F$ is Lipschitz continuous with Lipschitz constant $M$. We have
$$
F(y) \leq F(x) + \nabla F(x)(y-x) +\frac{M}{2}\lVert y-x \rVert^2 \,.
$$
We can compute that
\begin{align}
& \frac{\mathbb{E}F_{\mathbf{z}}(X_{k+1}) - \mathbb{E}F_{\mathbf{z}}(X_k)}{\eta} 
\\
\nonumber
&= \frac{1}{\eta}\left( \mathbb{E}\left[ F_{\mathbf{z}}\left(X_k-\eta A_J g(X_k,U_{\mathbf{z},k}) + \sqrt{2\eta\beta^{-1}\xi_k} \right) \right] - \mathbb{E}F_{\mathbf{z}}(X_k) \right) \, \nonumber \\
& \leq - \mathbb{E}\left[\nabla F_{\mathbf{z}}(X_k) A_J \nabla F_{\mathbf{z}}(X_k)\right] 
\nonumber
\\
&\qquad
+ \frac{M}{2\eta} \mathbb{E}\biggl\lVert -\eta A_J (g(X_k,U_{\mathbf{z},k}) - \nabla F_{\mathbf{z}}(X_k)) - \eta A_J \nabla F_{\mathbf{z}}(X_k)  + \sqrt{2\eta\beta^{-1}}\xi_k\biggr\rVert^2 \, \nonumber \\
& \leq -\mathbb{E}\lVert \nabla F_{\mathbf{z}}(X_k) \rVert^2 
\nonumber
\\
&\qquad
+ \frac{M}{2\eta}\bigg(\eta^2\lVert A_J \rVert^2
\mathbb{E}\lVert g(X_k,U_{\mathbf{z},k}) -  \nabla F_{\mathbf{z}}(X_k) \rVert^2 
\nonumber
\\
&\qquad\qquad\qquad\qquad
+ \eta^2 \lVert A_J \rVert^2 
\mathbb{E}\lVert \nabla F_{\mathbf{z}}(X_k) \rVert^2 + 2\eta\beta^{-1} \mathbb{E}\lVert \xi_k \rVert^2 \bigg) \,,
\end{align}
where the first inequality is using $\mathbb{E}[g(x,U_{\mathbf{z},k})] = \nabla F_{\mathbf{z}}(x)$ for $x \in \mathbb{R}^d$, 
and the last equality is due to the fact that the inner product of independent random vectors is $0$. Then using Assumption~\ref{A4}, we have
\begin{align}
& \frac{\mathbb{E}F_{\mathbf{z}}(X_{k+1}) - \mathbb{E}F_{\mathbf{z}}(X_k)}{\eta} \, \nonumber \\
&\leq -\mathbb{E}\lVert \nabla F_{\mathbf{z}}(X_k) \rVert^2  + M\eta \delta \lVert  A_J\rVert^2 \left(M^2\mathbb{E}\lVert X_k \rVert^2 + B^2 \right) 
\nonumber
\\
&\qquad\qquad\qquad
+ \frac{M}{2}\eta\lVert A_J \rVert^2 \mathbb{E}\lVert \nabla F_{\mathbf{z}}(X_k) \rVert^2 + M\beta^{-1}d \, \nonumber \\
& = -\left(1 - \frac{M}{2}\eta \lVert A_J \rVert^2 \right) \mathbb{E}\lVert \nabla F_{\mathbf{z}}(X_k) \rVert^2 + M^3 \eta \delta \lVert A_J \rVert^2 \mathbb{E}\lVert X_k \rVert^2 + M\left(\eta \delta \lVert  A_J\rVert^2 B^2 + \beta^{-1}d \right) \,. \label{pf:lm:uniform-L2:discrete}
\end{align}
If $\lVert x \rVert \geq \sqrt{2b/m}$, $(m,b)$-dissipative in Assumption~\ref{A3} implies,
\begin{align}
\lVert \nabla F(x) \rVert \geq m\lVert x \rVert - \frac{b}{\lVert x \rVert} \geq \frac{m}{2}\lVert x \rVert \,. \label{pf:lm:dissipative}
\end{align}
Then \eqref{pf:lm:uniform-L2:discrete} implies
\begin{align*}
\frac{\mathbb{E}F_{\mathbf{z}}(X_{k+1}) - \mathbb{E}F_{\mathbf{z}}(X_{k})}{\eta} 
&\leq  -\left(1 - \frac{M\lVert A_J \rVert^2 }{2}\eta - \frac{4M^3\delta\lVert A_J \rVert^2}{m^2}\eta \right) \mathbb{E} \lVert \nabla F_{\mathbf{z}}(X_k) \rVert^2 
\\
&\qquad\qquad\qquad
+  M\left(\eta \delta \lVert  A_J\rVert^2 B^2 + \beta^{-1}d \right) \,.
\end{align*}
According to Assumption~\ref{A4}, $0\leq \delta<1$,
and under the assumption for the stepsize $\eta$, we have
$$
\eta\leq\frac{m^2}{Mm^2\lVert A_J \rVert^2 + 8M^3 \lVert A_J \rVert^2}\leq \frac{m^2}{Mm^2\lVert A_J \rVert^2 + 8M^3\delta \lVert A_J \rVert^2} \,,
$$
it implies 
$$
1 - \frac{M\lVert A_J \rVert^2}{2}\eta  - \frac{4M^3\delta\lVert A_J \rVert^2}{m^2}\eta \geq \frac{1}{2} \,.
$$
By applying \eqref{pf:lm:dissipative} again, we can compute that
$$
\frac{\mathbb{E}F_{\mathbf{z}}(X_{k+1}) - \mathbb{E}F_{\mathbf{z}}(X_{k})}{\eta} \leq  -\frac{m^2}{8} \mathbb{E} \lVert X_k \rVert^2 +  M\left(\eta \delta \lVert  A_J\rVert^2 B^2 + \beta^{-1}d \right) \,.
$$
If $\lVert x \rVert < \sqrt{2b/m}$, under the assumption
for the stepsize $\eta$, we have
$$
\eta \leq \frac{m^2}{Mm^2\lVert A_J \rVert^2 + 8M^3 \lVert A_J \rVert^2} < \frac{2}{M\lVert A_J \rVert^2} \,,
$$
so that 
$$
1-\frac{M\lVert A_J \rVert^2}{2} \eta > 0 \,.
$$
Hence, \eqref{pf:lm:uniform-L2:discrete} implies,
$$
\frac{\mathbb{E}F_{\mathbf{z}}(X_{k+1}) - \mathbb{E}F_{\mathbf{z}}(X_{k})}{\eta} \leq M^3\eta \delta \lVert A_J \rVert^2(2b/m) + M\left(\eta \delta \lVert  A_J\rVert^2 B^2 + \beta^{-1}d \right) \,.
$$
Overall, for all $x \in \mathbb{R}^d$, we have
\begin{align}
\frac{\mathbb{E}F_{\mathbf{z}}(X_{k+1}) - \mathbb{E}F_{\mathbf{z}}(X_k)}{\eta} 
&\leq -\frac{m^2}{8}\mathbb{E} \lVert X_k \rVert^2 + \frac{2bM^3}{m}\eta \delta \lVert A_J \rVert^2+ M\left(\eta \delta \lVert  A_J\rVert^2 B^2 + \beta^{-1}d \right) + \frac{mb}{4} \, \nonumber \\
&< -\frac{m^2}{8}\mathbb{E} \lVert X_k \rVert^2 + \delta \left(\frac{bM^2}{m} + \frac{B^2}{2} \right) + M\beta^{-1}d + \frac{mb}{4} \,, 
\end{align}
where we used $\eta< 1/(2M\lVert A_J \rVert^2)$ to get the strict inequality. 
We recall the quadratic bound for objective function:
$$
F_{\mathbf{z}}(x) \leq \frac{M+B}{2}\lVert x \rVert^2 + \frac{B}{2}+A \,.
$$
Hence, we get
$$
\frac{\mathbb{E}F_{\mathbf{z}}(X_{k+1}) - \mathbb{E}F_{\mathbf{z}}(X_k)}{\eta} \leq -\frac{m^2}{4(M+B)}F_{\mathbf{z}}(X_k) + \frac{m^2(B+2A)}{8(M+B)} + \delta \left(\frac{bM^2}{m} + \frac{B^2}{2} \right) + M\beta^{-1}d + \frac{mb}{4}\,.
$$
Therefore, for $ \eta \leq \frac{m^2}{(m^2+8M^2)M\lVert A_J \rVert^2} $, we have the following inequality by~\eqref{expectation:bd},
$$
\mathbb{E}F_{\mathbf{z}}(X_{k+1}) \leq \mathbb{E}F_{\mathbf{z}}(X_0) + \frac{2\delta(2bM^2/m + B^2)(M+B)}{m^2} + \frac{4M\beta^{-1}d(M+B)}{m^2} + \frac{b(M+B)}{m} + \frac{B}{2} + A \,.
$$
With $\lVert X(0) \rVert \leq R \, $, $F_{\mathbf{z}}(X_0)$ is bounded by
$$
F_{\mathbf{z}}(X_0) \leq \frac{M}{2}\lVert X_0 \rVert^2 + B\lVert X_0 \rVert + A \leq \frac{M}{2}R^2 + BR + A \,.
$$
In addition, we recall that 
$$
\lVert x \rVert^2 \leq \frac{3}{m} F_{\mathbf{z}}(x) + \frac{3b}{2m}\log 3 \,.
$$
Therefore, we obtain the uniform $L^2$ bound for $\lVert X_k \rVert$:
\begin{align}
\sup_{k\geq 0 } \mathbb{E}_{\mathbf{z}}\lVert X_k \rVert^2 & \leq \frac{3}{m}\mathbb{E}F_{\mathbf{z}}(X_{k}) + \frac{3b}{2m} \log 3 \, \nonumber \\
& \leq \frac{3MR^2 + 6BR + 3B + 6A + 3b\log3}{2m} +\frac{6\delta(2bM^2 + B^2m)(M+B)}{m^4} \, \nonumber \\
& \qquad\qquad + \frac{12M\beta^{-1}d(M+B)}{m^3} + \frac{3b(M+B)}{m^2} \,.
\label{uniform:L2:bound}
\end{align}
The proof is complete.
\end{proof}

\subsection{Proof of Lemma~\ref{lm:exponential}}
\begin{proof}
First, notice that the quadratic bound in Lemma~\ref{slm:quadratic-bd} gives 
that uniformly in $\mathbf{z}\in\mathcal{Z}^{n}$,
\begin{equation*}
\frac{3}{m}F_{\mathbf{z}}(x) \geq  \lVert x \rVert^2 - \frac{3b}{2m}\log 3 \,.
\end{equation*}
Thus it suffices for us to get a uniform bound for $\mathbb{E} \left[ e^{\frac{3}{m}F_{\mathbf{z}}(X(t))} \right]$. 
We recall that the non-reversible Langevin SDE is given by
\begin{equation} 
dX(t)=-A_J\left( \nabla F_{\mathbf{z}}(X(t)) \right)dt + \sqrt{2\beta^{-1}}dB(t)\,, 
\end{equation}
whose infinitesimal generator for this system is
\begin{equation*}
\mathcal{L}_J=-A_J \nabla F_{\mathbf{z}} \cdot \nabla + \beta^{-1} \Delta \,. 
\end{equation*}
For any $x\in\mathbb{R}^d  $, we can compute that 
\begin{align}
\mathcal{L}_J F_{\mathbf{z}}(x)& =-A_J \nabla F_{\mathbf{z}}(x) \cdot \nabla F_{\mathbf{z}}(x)+\beta^{-1} \Delta F_{\mathbf{z}}(x) = - \lVert \nabla F_{\mathbf{z}}(x) \rVert^2+\beta^{-1} \Delta F_{\mathbf{z}}(x) \,. 
\end{align}
where $J$ is an anti-symmetric matrix and $\langle J \nabla F, \nabla F \rangle = 0$. For any constant $\alpha>0$, we get
\begin{align}
\mathcal{L}_J\left(e^{\alpha F_{\mathbf{z}}}\right) &= \left( \alpha \mathcal{L}_J F_{\mathbf{z}} + \alpha^2\beta^{-1} \lVert \nabla F \rVert^2\right)e^{\alpha F_{\mathbf{z}}} 
\nonumber \\
&= \left( \left( \alpha - \alpha^2 \beta^{-1} \right)\mathcal{L}_J F_{\mathbf{z}} + \alpha^2 \beta^{-2}\Delta F_{\mathbf{z}} \right)e^{\alpha F_{\mathbf{z}}}  \nonumber \,, \\
&= \left(-\left( \alpha - \alpha^2 \beta^{-1} \right) \lVert \nabla F_{\mathbf{z}} \rVert^2 + \alpha\beta^{-1} \Delta F_{\mathbf{z}} \right) e^{ \alpha F_{\mathbf{z}}} \,  \label{pf:lm:generator} \,.
\end{align}
where we used the property of the anti-symmetric matrix $J$, such that $\langle \nabla F_{\mathbf{z}},J \,\nabla F_{\mathbf{z}} \rangle = 0  $. Recall $F_{\mathbf{z}}$ is $M$-smooth, then $\Delta F \leq Md$. In addition, by assuming $\beta>\alpha >0$, then $\alpha - \alpha^2\beta^{-1}>0$, and the relation \eqref{pf:lm:generator} implies
\begin{equation*}
\mathcal{L}_J\left(e^{\alpha F_{\mathbf{z}}}\right) \leq \left( \alpha \beta^{-1} \Delta F_{\mathbf{z}}  \right) e^{ \alpha F_{\mathbf{z}}} \leq \left( \alpha \beta^{-1} Md  \right) e^{ \alpha F_{\mathbf{z}}} \,.
\end{equation*}
Recall that the initial condition satisfies $\lVert X(0) \rVert \leq R $, and the quadratic bound Lemma~\ref{slm:quadratic-bd} for function $F_{\mathbf{z}}$:
\begin{equation*}
F_{\mathbf{z}}(x) \leq \frac{M}{2} \lVert x \rVert^2 + B\lVert x \rVert + A \,. 
\end{equation*}
Then it follows from Corollary 2.4 \cite{CHJ13} that we have the exponential integrability:
\begin{equation}
\mathbb{E} \left[ e^{ \alpha F_{\mathbf{z}}(X(t))} \right] \leq \mathbb{E} \left[ e^{ \alpha F_{\mathbf{z}}(X(0))} \right] e^{\alpha\beta^{-1}Md \, t} \leq e^{\alpha\left(MR^2/2+BR+A+\beta^{-1}Md \, t \right)} <  \infty \label{pf:lm:exponential}\,.
\end{equation}
Next, by applying It\^{o}'s formula to $e^{(3/m)F_{\mathbf{z}}}$, we get:
\begin{equation}
e^{\frac{3}{m}F_{\mathbf{z}}(X(t))} = e^{\frac{3}{m}F_{\mathbf{z}}(X(0))}  + \int_0^{t} \mathcal{L}_J \left( e^{\frac{3}{m}F_{\mathbf{z}}(X(s))} \right) ds + \int_0^{t} \frac{3\sqrt{2\beta^{-1}}}{m} \nabla F_{\mathbf{z}}(X(s)) e^{\frac{3}{m}F_{\mathbf{z}}(X(s))} dB(s) \,.\label{pf:lm:martingale}
\end{equation}
We can check that the square integrability condition holds for the diffusion term in \eqref{pf:lm:martingale}.
That is, for any $t>0$, 
\begin{align}
& \int_0^{t} \frac{18\beta^{-1}}{m} \mathbb{E} \left[  \left\Vert \nabla F_{\mathbf{z}}(X(s))  e^{\frac{3}{m}F_{\mathbf{z}}(X(s))} \right\Vert^2 \right] ds \nonumber \\
& \quad \leq \int_0^{t} \frac{18\beta^{-1}}{m} \mathbb{E} \left[ \left( 2M^2\lVert X(s) \rVert^2 + 2B^2 \right) e^{\frac{6}{m}F_{\mathbf{z}}(X(s))} \right] ds \nonumber \\
& \quad \leq \int_0^{t} \frac{18\beta^{-1}}{m} \mathbb{E} \left[ \left( 2M^2\left(\frac{3}{m}\left(F_{\mathbf{z}}(X(s)) + \frac{b}{2}\log 3 \right)\right) + 2B^2 \right) e^{\frac{6}{m}F_{\mathbf{z}}(X(s))} \right] ds \nonumber \\
& \quad \leq \int_0^{t} \frac{18\beta^{-1}}{m}\mathbb{E}\left[M^2 e^{\frac{12}{m}F_{\mathbf{z}}(X(s))}+\left(\frac{3bM^2\log3}{m} + 2B^2\right)e^{\frac{6}{m}F_{\mathbf{z}}(X(s))} \right] ds < \infty \,. \label{pf:lm:sqrtintegrability}
\end{align}
The first and second inequalities above are due to the quadratic bounds in Lemma~\ref{slm:quadratic-bd}, \eqref{slm:gradient-bd} and \eqref{slm:objfunc-bd} and the third inequality above is due to the fact that
$x \leq e^x, x>0$ and the exponential integrability property in \eqref{pf:lm:exponential} with $\alpha = 6/m $. As a result, we know $\int_0^t \frac{3\sqrt{2\beta^{-1}}}{m} \nabla F_{\mathbf{z}}(X(s)) e^{\frac{3}{m}F_{\mathbf{z}}(X(s))} dB(s)$ is a martingale, then we can take the expectation 
in \eqref{pf:lm:martingale} and obtain:
\begin{equation*}
\mathbb{E} \left[ e^{\frac{3}{m}F_{\mathbf{z}}(X(t))} \right] = \mathbb{E} \left[ e^{\frac{3}{m}F_{\mathbf{z}}(X(0))}  \right] + \int_0^t \mathbb{E} \left[ \mathcal{L}_J \left( e^{\frac{3}{m}F_{\mathbf{z}}(X(s))} \right) \right] ds \,.
\end{equation*}
Next, we compute an upper bound for $\mathbb{E} \left[ \mathcal{L}_J \left( e^{\frac{3}{m}F_{\mathbf{z}}(X(\cdot))} \right) \right] $ in the above equation. Lemma 28 in \cite{GGZ18-2} gives the Lyapunov condition for $F_{\mathbf{z}}(x)$ such that
\begin{equation}
\mathcal{L}_J F_{\mathbf{z}}(x) \leq -\frac{m^2}{2(M+B)} F_{\mathbf{z}}(x) + \frac{m^2(B/2+A)}{2(M+B)} + \frac{mb}{2} + \beta^{-1}Md \,.
\end{equation} 
Then by applying \eqref{pf:lm:generator} and $\beta > 3/m$, we have 
\begin{align}
\mathcal{L}_J \left( e^{\frac{3}{m}F} \right) \leq & \Bigg.\Bigg[-\left( \frac{3m-9\beta^{-1}}{2(M+B)}\right)F \nonumber \\
 & \quad  + \left( \frac{(3m-9\beta^{-1})(B/2+A)+3b(M+B)}{2(M+B)}  + \frac{6\beta^{-1}Md-9b\beta^{-1}}{2m} \right) \Bigg.\Bigg] e^{\frac{3}{m}F} \,. \label{pf:lm:generator-ineqn}
\end{align}
If 
$$
-\left( \frac{3m-9\beta^{-1}}{2(M+B)}\right)F_{\mathbf{z}} + \left( \frac{(3m-9\beta^{-1})(B/2+A)+3b(M+B)}{2(M+B)}  + \frac{6\beta^{-1}Md-9b\beta^{-1}}{2m} \right) < 0 \,,
$$
then we have $ \mathcal{L}_J \left( e^{\frac{3}{m}F_{\mathbf{z}}} \right) < 0$. 
Otherwise, 
$$
-\left( \frac{3m-9\beta^{-1}}{2(M+B)}\right)F_{\mathbf{z}} + \left( \frac{(3m-9\beta^{-1})(B/2+A)+3b(M+B)}{2(M+B)}  + \frac{6\beta^{-1}Md-9b\beta^{-1}}{2m} \right) \geq 0 \,,
$$
which implies
\begin{equation}
F_{\mathbf{z}} \leq \frac{B}{2}+A + \frac{M+B}{m-3\beta^{-1}}\left( b+\frac{\beta^{-1}(2Md-3b)}{m} \right) \,.
\end{equation}
Since the objective function $F_{\mathbf{z}}$ is non-negative, it follows from \eqref{pf:lm:generator-ineqn} that
\begin{equation}
\mathcal{L}_J \left( e^{\frac{3}{m}F_{\mathbf{z}}} \right) \leq \left(\frac{(3m-9\beta^{-1})(B/2+A)+3b(M+B)}{2(M+B)} + \frac{6\beta^{-1}Md-9b\beta^{-1}}{2m} \right) e^{\frac{3}{m}F_{\mathbf{z}}} \,.
\end{equation}
Using the upper bound of $F_{\mathbf{z}}$ in the previous calculation, we get, for $\beta > 3/m$, 
\begin{equation}
\mathcal{L}_J \left( e^{\frac{3}{m}F_{\mathbf{z}}} \right) \leq l_1e^{l_2} \,,
\end{equation}
with 
\begin{equation}
l_1 := \frac{(3m-9\beta^{-1})(B/2+A)+3b(M+B)}{2(M+B)}  + \frac{6\beta^{-1}Md-9b\beta^{-1}}{2m} \,, 
\end{equation}
and
\begin{equation}
l_2:= \frac{3}{m} \left(\frac{B}{2}+A + \frac{M+B}{m-3\beta^{-1}}\left( b+\frac{\beta^{-1}(2Md-3b)}{m} \right)\right) \,. 
\end{equation}
Therefore, it follows from \eqref{pf:lm:martingale} that
\begin{align}
\mathbb{E} \left[ e^{\frac{3}{m}F_{\mathbf{z}}(X(t))} \right] &= \mathbb{E} \left[ e^{\frac{3}{m}F_{\mathbf{z}}(X(0))}  \right] + \int_0^{t} \mathbb{E} \left[ \mathcal{L}_J \left( e^{\frac{3}{m}F_{\mathbf{z}}(X(s))} \right) \right] ds \nonumber \\
& \quad \leq \mathbb{E} \left[ e^{\frac{3}{m}F_{\mathbf{z}}(X(0))}  \right] + \int_0^{t} l_1 e^{l_2}ds = \mathbb{E} \left[ e^{\frac{3}{m}F_{\mathbf{z}}(X(0))}  \right] + l_1 e^{l_2}\cdot t \,.
\end{align}
With the initial condition satisfying $\lVert X(0) \rVert \leq R $, we can bound $F_{\mathbf{z}}(X(0))$ by the quadratic bound \eqref{slm:objfunc-bd} in Lemma~\ref{slm:quadratic-bd}, that is,
\begin{equation*}
F_{\mathbf{z}}(X(0)) \leq \frac{M}{2}\lVert X(0) \rVert^2+B\lVert X(0) \rVert + A \leq \frac{MR^2}{2} + BR+A \,.
\end{equation*}
As a result, for any $t>0$, with $\beta>3/m$ and $\lVert X(0) \rVert \leq R $, we get 
\begin{equation*}
\mathbb{E} \left[ e^{\frac{3}{m}F_{\mathbf{z}}(X(t))} \right] \leq e^{\frac{MR^2}{2}+BR+A} + l_1 e^{l_2}(t) \,.
\end{equation*}
Moreover, the quadratic bound in \eqref{slm:gradient-bd} from Lemma~\ref{slm:quadratic-bd} gives 
\begin{equation*}
\frac{3}{m}F_{\mathbf{z}}(x) \geq  \lVert x \rVert^2 - \frac{3b}{2m}\log 3 \,,
\end{equation*}
which implies
\begin{equation*}
\mathbb{E} \left[ e^{\lVert X(t) \rVert ^2} \right] \leq e^{\frac{MR^2}{2}+BR+A + \frac{3b}{2m}\log 3} + l_1 e^{l_2}(t) < \infty \,.
\end{equation*}
The proof is complete.
\end{proof}

\subsection{Proof of Lemma \ref{lm:approximation}}
\begin{proof}
The proof closely follows from the proof of Lemma 7 in Raginsky et al. \cite{RRT17}.
Defining the continuous-time interpolation for $X_k$,
\begin{align}
\overline{X}(t) = X_0 -\int_0^{t}  A_J g\left(\overline{X}(\lfloor{s/\eta}\rfloor \eta ),\overline{U}_{\mathbf{z}}(s)\right)ds + \sqrt{2\beta^{-1}}\int_0^t dB(s) \,, 
\end{align}
where $\overline{U}_{\mathbf{z}}(t):=U_{\mathbf{z},k}$ for all $t \in [k\eta,(k+1)\eta)  $. Here $\overline{X}(k\eta)$ and $X_k$ follow the same probability law $\mu_{\mathbf{z},k}$. The result from Gy\"ongy \cite{Gyo86} implies $\overline{X}$ has the same marginals as $\widetilde{X}$ which is a Markov process:
\begin{align}
\widetilde{X}(t)=X_0 - \int_0^t  A_J g_{\mathbf{z},s}\left(X(s)\right) ds + \sqrt{2\beta^{-1}}\int_0^t dB(s) \,, 
\end{align}
with 
\begin{align}
g_{\mathbf{z,s}}(x) := \mathbb{E}_{\mathbf{z}}\left[  g\left(\overline{X}(\lfloor{s/\eta}\rfloor \eta ),\overline{U}_{\mathbf{z}}(s)\right) | \overline{X}(t) = x  \right] \,. 
\end{align}
Suppose the non-reversible Langevin diffusion $X(t)$ follows the probability measure $\mathbb{P}$, 
and $\widetilde{X}(t)$ follows the probability measure $\widetilde{\mathbb{P}}$. 
The Radon-Nikodym derivative is represented by the Girsanov formula 
under the filtration $\mathcal{F}_t  $ for $t>0  $
\begin{align}
\frac{d\mathbb{P}}{d\widetilde{\mathbb{P}}}\bigg\rvert_{\mathcal{F}_t} = e^{\sqrt{\frac{\beta}{2}}\int_0^t A_J \left(\nabla F_{\mathbf{z}}(\widetilde{X}(s)) - g_{\mathbf{z},s}(\widetilde{X}(s))\right) \cdot dB(s) - \frac{\beta}{4} \int_0^t \lVert A_J \left(\nabla F_{\mathbf{z}}(\widetilde{X}(s)) - g_{\mathbf{z},s}(\widetilde{X}(s))\right) \rVert^2 ds} \,. 
\end{align} 
Since the probability law of $\overline{X}(t)$ and $\widetilde{X}(t)$ are the same for each $t>0$, we can use the martingale property of Ito integral and compute the relative entropy as follows:
\begin{align}
D\left( \widetilde{\mathbb{P}}_t \Vert \mathbb{P}_t  \right) & : = - \int d\widetilde{\mathbb{P}}_t \log\frac{d\mathbb{P}_t}{d\widetilde{\mathbb{P}}_t} \nonumber \\
& = \frac{\beta}{4}\lVert A_J \rVert^2\int_0^t \mathbb{E}_{\mathbf{z}} \lVert  \nabla F_{\mathbf{z}}(\widetilde{X}(s)) - g_{\mathbf{z},s} (\widetilde{X}(s)) \rVert^2 ds \nonumber \\
& = \frac{\beta}{4}\lVert A_J \rVert^2\int_0^t \mathbb{E}_{\mathbf{z}}\lVert  \nabla F_{\mathbf{z}}(\overline{X}(s)) - g_{\mathbf{z},s}( \overline{X}(s))\rVert^2 ds\,.  
\end{align}
It follows that
\begin{align}
&D\left( \widetilde{\mathbb{P}}_{k\eta} \Vert \mathbb{P}_{k \eta} \right) 
\nonumber
\\
& = \frac{\beta}{4}\lVert A_J \rVert^2 \sum_{j=0}^{k-1} \int_{j\eta}^{(j+1)\eta} \mathbb{E}_{\mathbf{z}} \Big\lVert \nabla F_{\mathbf{z}}(\overline{X}(s)) - g_{\mathbf{z},s}(\overline{X}(s)) \Big\rVert^2 ds \, \nonumber \\
&\leq \frac{\beta}{4}\lVert A_J \rVert^2 \sum_{j=0}^{k-1} \int_{j\eta}^{(j+1)\eta} \mathbb{E}_{\mathbf{z}} \Big\lVert \nabla F_{\mathbf{z}}(\overline{X}(s)) -  g\left(\overline{X}(\lfloor{s/\eta}\rfloor \eta ),\overline{U}_{\mathbf{z}}(s)\right)  \Big\rVert^2 ds \, \nonumber \\
&\leq \frac{\beta}{2}\lVert A_J \rVert^2 \sum_{j=0}^{k-1} \int_{j\eta}^{(j+1)\eta} \mathbb{E}_{\mathbf{z}} \Big\lVert \nabla F_{\mathbf{z}}(\overline{X}(s)) - \nabla F_{\mathbf{z}}(\overline{X}(\lfloor{s/\eta}\rfloor \eta )) \Big\rVert^2 ds \, \nonumber \\
&\qquad\qquad + \frac{\beta}{2} \lVert A_J \rVert^2 \sum_{j=0}^{k-1} \int_{j\eta}^{(j+1)\eta} \mathbb{E}_{\mathbf{z}} \Big\lVert \nabla F_{\mathbf{z}}(\overline{X}(\lfloor{s/\eta}\rfloor \eta ))-g\left(\overline{X}(\lfloor{s/\eta}\rfloor \eta ),\overline{U}_{\mathbf{z}}(s)\right)  \Big\rVert^2 ds\,. \label{pf:lm:entropy}
\end{align}
With Assumption~\ref{A2}, we can infer that the first term in \eqref{pf:lm:entropy} is bounded as follows,
\begin{align}
& \frac{\beta}{2}\lVert A_J \rVert^2 \sum_{j=0}^{k-1} \int_{j\eta}^{(j+1)\eta} \mathbb{E}_{\mathbf{z}} \Big\lVert \nabla F_{\mathbf{z}}(\overline{X}(s)) - \nabla F_{\mathbf{z}}(\overline{X}(\lfloor{s/\eta}\rfloor \eta )) \Big\rVert^2 ds \nonumber \\
& \qquad\qquad \leq \frac{\beta}{2} M^2\lVert A_J \rVert^2 \sum_{j=0}^{k-1} \int_{j\eta}^{(j+1)\eta} \mathbb{E}_{\mathbf{z}} \Big\lVert \overline{X}(s) - \overline{X}(\lfloor{s/\eta}\rfloor \eta ) \Big\rVert^2 ds\,.
\end{align}
In addition, for some $s>0  $, $j\eta \leq s < (j+1)\eta  $, 
$$
\overline{X}(s)-\overline{X}(j\eta) = -(s-j\eta)g(X_j,U_{\mathbf{z},j})+ \sqrt{2\beta^{-1}}\left( B_s - B_{j\eta} \right)\,.
$$
Then, we can get
\begin{align}
& \mathbb{E}_{\mathbf{z}} \Big\lVert \overline{X}(s) - \overline{X}(\lfloor{s/\eta}\rfloor \eta ) \Big\rVert^2 = (s-j\eta)^2\mathbb{E}_{\mathbf{z}} \Big\lVert g(X_j,U_{\mathbf{z},j}) \Big\rVert^2 + 2\beta^{-1}(s-j\eta)^2d \, \nonumber \\
& \qquad\leq (s-j\eta)^2\mathbb{E}_{\mathbf{z}} \lVert g(X_j,U_{\mathbf{z},j}) - \nabla F_{\mathbf{z}}(X_j) \rVert^2 \, \nonumber \\
& \qquad\qquad\qquad + (s-j\eta)^2 \mathbb{E}_{\mathbf{z}} \Big\lVert \nabla F_{\mathbf{z}}(X_j) \Big\rVert^2 + 2\beta^{-1}(s-j\eta)^2d \, \nonumber \\
& \qquad \leq 2(s-j\eta)^2(1 + \delta)\left( M^2\sup_{j>0} \mathbb{E}_{\mathbf{z}}\lVert X_j \rVert^2 + B^2 \right) +2\beta^{-1}(s-j\eta)^2 d\,. 
\end{align}
The last inequality is from Assumption~\ref{A4} and the quadratic bound for $\nabla F_{\mathbf{z}}$ in Lemma~\ref{slm:quadratic-bd}, such that $\mathbb{E}_{\mathbf{z}}\lVert \nabla F_{\mathbf{z}}(X_j) \rVert^2 \leq 2\left(M^2 \sup_{j>0} \mathbb{E}_{\mathbf{z}}\lVert X_j \rVert^2+B^2\right)  $. For some $s-j\eta < \eta < 1  $ and $\delta<1$, the bound for the first term in \eqref{pf:lm:entropy} can be computed as,
\begin{align}
& \frac{\beta}{2} M^2\lVert A_J \rVert^2 \sum_{j=0}^{k-1} \int_{j\eta}^{(j+1)\eta} \mathbb{E}_{\mathbf{z}} \Big\lVert \overline{X}(s) - \overline{X}(\lfloor{s/\eta}\rfloor \eta ) \Big\rVert^2 ds \nonumber \\
& \qquad\qquad \leq \beta M^2 \lVert A_J \rVert^2 k\eta \left( 2\eta(1+\delta)\left(M^2 \sup_{j>0} \mathbb{E}_{\mathbf{z}}\lVert X_j \rVert^2+B^2\right) + 2\beta^{-1}d \eta \right)\,.
\end{align}
With Assumption~\ref{A4}, for $ j\eta \leq s < (j+1)\eta $, we can rewrite the second term in \eqref{pf:lm:entropy} as
\begin{align}
& \frac{\beta}{2} \lVert A_J \rVert^2 \sum_{j=0}^{k-1} \int_{j\eta}^{(j+1)\eta} \mathbb{E}_{\mathbf{z}} \Big\lVert \nabla F_{\mathbf{z}}(\overline{X}_j)-g\left(\overline{X}_j,\overline{U}_{\mathbf{z},j}\right)  \Big\rVert^2 ds \nonumber \\
& \qquad\qquad \leq \beta \lVert A_J \rVert^2 \left( M^2\sup_{j>0} \mathbb{E}_{\mathbf{z}}\lVert X_j \rVert^2 + B^2 \right)k\eta\delta\,. 
\end{align}
Combining these two inequality, we can have an upper bound for the relative entropy in \eqref{pf:lm:entropy},
\begin{align}
D\left( \widetilde{\mathbb{P}}_{k\eta} \Vert \mathbb{P}_{k \eta} \right) & \leq  \beta M^2 \lVert A_J \rVert^2 k\eta \left( 2\eta(1+\delta)\left(M^2 \sup_{j>0} \mathbb{E}_{\mathbf{z}}\lVert X_j \rVert^2+B^2\right) + 2\beta^{-1}d \eta \right) \nonumber \\
& \qquad\qquad + \beta \lVert A_J \rVert^2 \left( M^2\sup_{j>0} \mathbb{E}_{\mathbf{z}}\lVert X_j \rVert^2 + B^2 \right)k\eta\delta \,. 
\end{align}
From Lemma~\ref{lm:uniform-L2}, we conclude that, for any $\eta \leq \frac{m^2}{(m^2+ 8M^2)M\lVert A_J\rVert^2} \wedge 1 $,  
\begin{align} \label{pf:lm3:rel-etrp2}
D\left( \mu_{\mathbf{z},k} \Vert \nu_{\mathbf{z},k\eta}  \right) & \leq 2M^2 \beta \left( M^2 \mathcal{C}_d + B^2 + \beta^{-1}d \right)  \lVert A_J \rVert^2k\eta^2 
\nonumber
\\
&\qquad\qquad
+  (1+2M^2)\beta \left( M^2 \mathcal{C}_d +B^2 \right) \lVert A_J \rVert^2k\eta\delta \, \nonumber \\
& = (C_0\eta + C_1\delta)\lVert A_J \rVert^2(k\eta) \,,
\end{align}
where
\begin{align}
C_0=2\beta M^2 \left( M^2 \mathcal{C}_d + B^2 + \frac{d}{\beta} \right), \qquad C_1=(1+2M^2)\beta \left( M^2 \mathcal{C}_d +B^2 \right)\,.
\end{align}
With $ k\eta \geq e  $, $\eta \leq \sqrt{\eta} \leq 1  $ and $\delta < \sqrt{\delta} < 1  $, then we can also compute
\begin{align}
D(\mu_{\mathbf{z},k} \Vert \nu_{\mathbf{z},k\eta}) +\sqrt{D(\mu_{\mathbf{z},k} \Vert \nu_{\mathbf{z},k\eta})}
& = (C_0\eta+C_1\delta)\lVert A_J \rVert^2k\eta+(\sqrt{C_0\eta} + \sqrt{C_1\delta})\lVert A_J \rVert \sqrt{k\eta} \nonumber \\
& \leq \left((C_0+\sqrt{C_0})\sqrt{\eta}  + (C_1+\sqrt{C_1})\sqrt{\delta} \right) \lVert A_J \rVert^2 k\eta \nonumber \\
\end{align} 
The result from Bolley and Villani \cite{BV05} states that for any two Borel probability measures $\mu, \nu   $ on $\mathbb{R}^d  $ with finite second moment,
$$
\mathcal{W}_2(\mu,\nu) \leq C_{\nu}\left[\sqrt{D(\mu \Vert \nu)} +\left( \frac{D(\mu \Vert \nu)}{2} \right)^{1/4}\right]\,.
$$ 
with 
$$
C_\nu = 2\inf_{\lambda>0} \left( \frac{1}{\lambda}\left( \frac{3}{2}+\log \int_{\mathbb{R}^d} e^{\lambda\lVert x \rVert^2} \nu(dx) \right) \right)^{1/2}\,.
$$
Let $\mu=\mu_{\mathbf{z},k}  $, $\nu = \nu_{\mathbf{z},k\eta}$ and take $\lambda=1$, inferring from Lemma~\ref{lm:exponential}, $k\eta \geq 1$, we can compute
\begin{align}
\mathcal{W}^2_2(\mu_{\mathbf{z},k},\nu_{\mathbf{z},k\eta}) & \leq 4\log(L_0+L_1(k\eta)) \left[ \sqrt{D(\mu_{\mathbf{z},k} \Vert \nu_{\mathbf{z},k\eta})} +\left( \frac{D(\mu_{\mathbf{z},k} \Vert \nu_{\mathbf{z},k\eta})}{2} \right)^{1/4} \right]^2 \,
\nonumber \\
& \leq 8(\log(L_0+L_1)+\log(k\eta))\left[ D(\mu_{\mathbf{z},k} \Vert \nu_{\mathbf{z},k\eta}) +\sqrt{D(\mu_{\mathbf{z},k} \Vert \nu_{\mathbf{z},k\eta})} \right] \,,
\end{align}
where $\lVert X(0) \rVert \leq R = \sqrt{b/m}$. Additionally, let $k\eta \geq e$, we have $(k\eta) \log(k\eta) > k\eta$, hence
\begin{equation*}
\mathcal{W}^2_2(\mu_{\mathbf{z},k},\nu_{\mathbf{z},k\eta}) \leq \left(\hat{C}_0^2\sqrt{\eta}+\hat{C}_1^2\sqrt{\delta}\right) (k\eta)\log(k\eta) \lVert A_J \rVert^2\,, 
\end{equation*}
with 
$$
\hat{C}_0= \left(16\log\left(L_0+L_1\right) \left(C_0+\sqrt{C_0} \right)\right)^{1/2}\,, \qquad \hat{C}_1=\left(16\log\left(L_0+L_1\right) \left(C_1+\sqrt{C_1} \right)\right)^{1/2}\,,
$$
where $C_0 = \tilde{\mathcal{O}}(\beta+d)$ and $C_1=\tilde{\mathcal{O}}(\beta)$ are in \eqref{lm:const:approximation}, $L_0 = \tilde{\mathcal{O}}(1)$ in~\eqref{lm:const:l0} and $L_1 = e^{\tilde{\mathcal{O}}(\beta)}$ in ~\eqref{lm:const:l1}. 
The proof is complete.
\end{proof}

\subsection{Proof of Lemma~\ref{lm:uniform-L4}}
\begin{proof}
To prove the uniform $L^4$ bound for the non-reversible Langevin SDE \eqref{nonreversibleSDE}, we first recall the quadratic bound for $F_{\mathbf{z}}$ in \eqref{slm:objfunc-bd},
\begin{equation*}
\frac{m}{3}\lVert x \rVert^2 - b < \frac{m}{3}\lVert x \rVert^2 - \frac{b}{2} \log 3 \leq F_{\mathbf{z}}(x) \,,
\end{equation*} 
which implies the following:
\begin{equation}
\lVert x \rVert^4 \leq \left(\frac{3}{m}F_{\mathbf{z}}(x)+\frac{3b}{m}\right)^2 \leq \frac{9}{m^2}F_{\mathbf{z}}^2(x) + \frac{9b(M+B)}{m^2}\lVert x \rVert^2 + \frac{18b}{m^2}\left(\frac{B}{2} +A \right) + \frac{9b^2}{m^2}\,.
\label{pr:lm:x^4-bd}
\end{equation}
For $X(0) = x_0 \in \mathbb{R}^d$ with $\lVert x_0 \rVert \leq R = \sqrt{b/m} $, there is a uniform bound for $\mathbb{E}\lVert X(t) \rVert^2$, i.e. $\mathbb{E}\lVert X(t) \rVert^2 \leq \mathcal{C}_c$ where $\mathcal{C}_c$ is a constant in \eqref{L2-SDE} from Lemma~\ref{lm:uniform-L2}. 
Next, we focus on computing an upper bound for $\mathbb{E} F_{\mathbf{z}}^2(X(t)) $.

Recall the the infinitesimal generator of the SDE in \eqref{nonreversibleSDE}:
\begin{equation*}
\mathcal{L}_J=-A_J \nabla F_{\mathbf{z}} \cdot \nabla + \beta^{-1} \Delta \,. 
\end{equation*}
Then, we can compute that
\begin{equation*}
\mathcal{L}_J F_{\mathbf{z}}(x) =-A_J \nabla F_{\mathbf{z}}(x) \cdot \nabla F_{\mathbf{z}}(x)+\beta^{-1} \Delta F_{\mathbf{z}}(x) = - \lVert \nabla F_{\mathbf{z}}(x) \rVert^2+\beta^{-1} \Delta F_{\mathbf{z}}(x)\,,
\end{equation*}
where $J$ is an $d \times d$ anti-symmetric matrix so that $\langle J \, \nabla F_{\mathbf{z}},\nabla F_{\mathbf{z}} \rangle = 0 $.
Moreover, we can compute that
\begin{align}
\mathcal{L}_J F_{\mathbf{z}}^2(x) & = \left(2F_{\mathbf{z}}\mathcal{L}_JF_{\mathbf{z}} + 2\beta^{-1}\lVert \nabla F_{\mathbf{z}} \rVert^2 \right) \nonumber \\
& = -2(F_{\mathbf{z}}(x)-\beta^{-1})\lVert \nabla F_{\mathbf{z}}(x) \rVert^2 + 2\beta^{-1}F_{\mathbf{z}}(x) \Delta F_{\mathbf{z}}(x)\,. \label{pf:lm:generator2}
\end{align}
By $(m,b)$-dispassive property, we have
\begin{equation*}
\lVert x \rVert \lVert \nabla F_{\mathbf{z}}(x) \rVert \geq \langle x, \nabla F_{\mathbf{z}}(x) \rangle \geq m \lVert x \rVert^2-b\,,
\end{equation*}
so that for any $\lVert x \rVert \geq \sqrt{2b/m}$, we have
\begin{equation*}
\lVert \nabla F_{\mathbf{z}}(x) \rVert \geq m \lVert x \rVert - \frac{b}{\lVert x \rVert} \geq \frac{m}{2} \lVert x  \rVert\,,
\end{equation*}
which yields $\lVert \nabla F_{\mathbf{z}}(x) \rVert^2 \geq (m^2/4)\lVert x \rVert^2$. Moreover, the objective function $F_{\mathbf{z}}$ is $M$-smooth, so that $\Delta F_{\mathbf{z}}(x) \leq Md $.  

For any $\lVert x \rVert \geq \sqrt{2b/m}$, under the assumption that $\beta \geq 3/m$, the quadratic bound for $F_{\mathbf{z}}$ in equation \eqref{slm:objfunc-bd} shows that 
\begin{equation}
\frac{m}{3}\lVert x \rVert^2 - b < F_{\mathbf{z}}(x) \leq \frac{M+B}{2}\lVert x \rVert^2 + \frac{B}{2} +A \,, \label{pf:lm:Fquadratic-bd}
\end{equation}
Then it follows from \eqref{pf:lm:generator2}, for $\lVert x \rVert \geq \sqrt{2b/m}$, we have
\begin{align}
\mathcal{L}_JF_{\mathbf{z}}^2(x) & \leq -2\left(\frac{m}{3}\lVert x \rVert^2-b-\beta^{-1}\right) \cdot \frac{m^2}{4}\lVert x \rVert^2 + 2\beta^{-1}\left( \frac{M+B}{2}\lVert x \rVert^2 + \frac{B}{2}+A \right)Md \nonumber \\
& = -\frac{m^2}{2}\lVert x \rVert^2 \left( \frac{m}{3}\lVert x \rVert^2 - b -\beta^{-1} - \frac{2(M+B)\beta^{-1}}{m^2}  \right) + 2\beta^{-1}\left(\frac{B}{2}+A\right)Md\,.
\label{equivalent:to}
\end{align}
Let
\begin{equation*}
S :=  \left(\frac{6b}{m}+\frac{6\beta^{-1}}{m}+\frac{12(M+B)\beta^{-1}}{m^3}\right)^{1/2} > \sqrt{2b/m}\,.
\end{equation*}
Then \eqref{equivalent:to} is equivalent to
\begin{equation}
\mathcal{L}_JF_{\mathbf{z}}^2(x) \leq -\frac{m^2}{2}\lVert x \rVert^2 \left( \frac{m}{3}\lVert x \rVert^2 - \frac{m}{6}S^2  \right) + 2\beta^{-1}\left(\frac{B}{2}+A\right)Md \,. \label{pf:lm:Sbreak}
\end{equation}
Therefore, if $\lVert x \rVert > S$, for \eqref{pf:lm:Sbreak}, we have 
\begin{align}
\mathcal{L}_JF_{\mathbf{z}}^2(x) & \leq -\frac{m^2}{2}\lVert x \rVert^2 \left( \frac{m}{3}\lVert x \rVert^2 - \frac{m}{6}\lVert x \rVert^2 \right) + 2\beta^{-1}\left(\frac{B} {2}+A\right)Md \nonumber \\
& =-\frac{m^3}{12}\,\lVert x \rVert^4 + (B+2A)\,\beta^{-1}Md\,. 
\end{align}
On the other hand, if $\sqrt{2b/m} < \lVert x \rVert \leq S$, we obtain from \eqref{pf:lm:Sbreak} that
\begin{align*}
\mathcal{L}_JF_{\mathbf{z}}^2(x) 
&\leq -\frac{m^3}{6}\,\lVert x \rVert^4 + \frac{m^3}{12}\lVert x \rVert^2 S^2 + (B+2A)\,\beta^{-1}Md
\\
&\leq -\frac{m^3}{12}\,\lVert x \rVert^4 + \frac{m^3}{12} S^4 + (B+2A)\,\beta^{-1}Md \,.
\end{align*}
To summarize, for any $\lVert x \rVert \geq \sqrt{2b/m}$, we have, 
\begin{align}
\mathcal{L}_JF_{\mathbf{z}}^2(x) &\leq -\frac{m^3}{12}\,\lVert x \rVert^4 + \frac{m^3}{12}S^4 + (B+2A)\,\beta^{-1}Md \nonumber \\
&\leq -\frac{m^3}{12} \lVert x \rVert^4 + 3m\left( b+\beta^{-1}+\frac{2(M+B)\beta^{-1}}{m^2} \right)^2 + (B+2A)\beta^{-1}Md\,. \label{pf:lm:f^2-x^4a}
\end{align}
Next we consider the case $\lVert x \rVert \leq \sqrt{2b/m}$ and obtain from the equation \eqref{pf:lm:generator2} that,
\begin{align*}
\mathcal{L}_J F_{\mathbf{z}}^2(x) &= -2 F_{\mathbf{z}}(x) \lVert \nabla F_{\mathbf{z}}(x) \rVert^2 + 2\beta^{-1}\lVert \nabla F_{\mathbf{z}}(x) \rVert^2  + 2\beta^{-1}F_{\mathbf{z}}(x) \Delta F_{\mathbf{z}}(x)
\\
&\leq
2\beta^{-1}\lVert \nabla F_{\mathbf{z}}(x) \rVert^2  + 2\beta^{-1}F_{\mathbf{z}}(x) \Delta F_{\mathbf{z}}(x) \,,
\end{align*}
where we use the fact $F_{\mathbf{z}}$ function is non-negative in Assumption~\ref{A1}. 
By applying the quadratic bounds in Lemma~\ref{slm:quadratic-bd} that 
\begin{equation*}
\lVert \nabla F_{\mathbf{z}}(x) \rVert \leq M \lVert x \rVert + B \,, \qquad \qquad F_{\mathbf{z}}(x) \leq \frac{M}{2} \lVert x \rVert^2 + B \lVert x \rVert + A \,,
\end{equation*}
and $M$-smoothness of $F_{\mathbf{z}}$ so that $\Delta F_{\mathbf{z}}\leq Md$, we get
\begin{align}
\mathcal{L}_J F_{\mathbf{z}}^2(x) &\leq 2\beta^{-1}\left( 2M^2 \lVert x \rVert^2+2B^2 \right)  + 2\beta^{-1}\left( \frac{M}{2}\lVert x \rVert^2 + B\lVert x \rVert + A \right) Md \,
\nonumber \\
&\leq \frac{8b\beta^{-1}M^2}{m}+4\beta^{-1}B^2 + \left( \frac{2b\beta^{-1}M}{m} + 2\beta^{-1}B\sqrt{2b/m} + 2\beta^{-1}A \right)Md \,. \label{pf:lm:f^2-x^4b}
\end{align}
Hence, for any $x \in \mathbb{R}^d$, we can compute from \eqref{pf:lm:f^2-x^4a} and \eqref{pf:lm:f^2-x^4b},
\begin{align}
\mathcal{L}_JF_{\mathbf{z}}^2(x) &\leq -\frac{m^3}{12}\,\lVert x \rVert^4 + \frac{m^3}{12}S^4 + (B+2A)\,\beta^{-1}Md  \nonumber \\
& \qquad + \frac{8b\beta^{-1}M^2}{m}+4\beta^{-1}B^2 + \left( \frac{2b\beta^{-1}M}{m} + 2\beta^{-1}B\sqrt{2b/m} + 2\beta^{-1}A \right)Md \nonumber \\
& \leq -\frac{m^3}{12} \lVert x \rVert^4 + 3m\left( b+\beta^{-1}+\frac{2(M+B)\beta^{-1}}{m^2} \right)^2 + \frac{8b\beta^{-1}M^2}{m}+4\beta^{-1}B^2 \, \nonumber \\
& \qquad\qquad\qquad + \beta^{-1}\left(B+2B\sqrt{2b/m} +  \frac{2bM}{m} +  4A  \right) Md \,.
\label{pf:lm:f^2-x^4}
\end{align}
Then using the quadratic bounds for $F_{\mathbf{z}}$ in \eqref{pf:lm:Fquadratic-bd}:
\begin{equation}
\frac{2}{M+B}\left(F_{\mathbf{z}}(x)-\frac{B}{2}-A\right) \leq \lVert x \rVert^2 \leq \frac{3}{m}F_{\mathbf{z}}(x)+\frac{3b}{m}\,.
\end{equation}
we get
\begin{equation*}
2\lVert x \rVert^4 +  \frac{8(B/2+A)^2}{(M+B)^2} \geq \left( \lVert x \rVert^2 + \frac{2(B/2+A)}{M+B}\right)^2 \geq \frac{4}{(M+B)^2}F_{\mathbf{z}}^2(x)\,.
\end{equation*}
Hence, we have
\begin{equation*}
\lVert x \rVert^4 \geq \frac{2}{(M+B)^2}F_{\mathbf{z}}^2(x) - \frac{(B+2A)^2}{(M+B)^2}\,.
\end{equation*}
Then by \eqref{pf:lm:f^2-x^4}, we can compute,
\begin{align}
\mathcal{L}_JF_{\mathbf{z}}^2(x) \leq & -\frac{m^3}{6(M+B)^2} F_{\mathbf{z}}^2(x) + \frac{m^3(B+2A)^2}{12(M+B)^2}+ 3m\left( b+\beta^{-1}+\frac{2(M+B)\beta^{-1}}{m^2} \right)^2 \nonumber\\
& \qquad\qquad + \frac{8b\beta^{-1}M^2}{m}+4\beta^{-1}B^2 + \beta^{-1}\left(B+2B\sqrt{2b/m} +  \frac{2bM}{m} +  4A  \right) Md \,.
\end{align}
Let 
$$
\mathcal{L}_JF_{\mathbf{z}}^2(x) \leq -c_1 F_{\mathbf{z}}^2(x) + K_1 \,.
$$
By It\^{o}'s formula, we get
\begin{align}
 e^{c_1t}F^{2}_{\mathbf{z}}(X(t)) &= F_{\mathbf{z}}^2(X(0)) + \int_0^t e^{c_1s} \mathcal{L}_J F^2_{\mathbf{z}}(X(s)) ds \nonumber
 \\
 &\qquad\qquad\qquad
 + \int_0^t e^{c_1s} F_{\mathbf{z}}(X(s)) \nabla F_{\mathbf{z}}(X(s)) \sqrt{2\beta^{-1}}dB(s) \,, \label{pf:lm:martingale2}
\end{align}
By using Corollary 2.4 \cite{CHJ13} in \eqref{pf:lm:exponential} and the similar argument in \eqref{pf:lm:sqrtintegrability}, we can show
$$
\mathbb{E} \int_0^t \biggl\lVert e^{c_1s} F_{\mathbf{z}}(X(s)) \nabla F_{\mathbf{z}}(X(s)) \sqrt{2\beta^{-1}} \biggr\rVert^2 ds \leq \mathbb{E} \int_0^t e^{2c_1s} \biggl\lVert e^{F_{\mathbf{z}}(X(s))} \nabla F_{\mathbf{z}}(X(s)) \biggr\rVert^2 2\beta^{-1} ds < \infty \,.
$$
Hence, the last term in \eqref{pf:lm:martingale2} is a martingale. Taking expectation for both side, we have
\begin{align}
e^{c_1t}\mathbb{E}F_{\mathbf{z}}^2(X(t)) \leq \mathbb{E}F_{\mathbf{z}}^2(X(0)) + \int_0^t e^{c_1s} K_1 ds \,.
\end{align}
It implies,
$$
\mathbb{E} F_{\mathbf{z}}^2(X(t)) \leq e^{-c_1t}\mathbb{E} F_{\mathbf{z}}^2(X(0))+\frac{K_1(1-e^{-c_1t})}{c_1} \,,
$$
taking $ t \rightarrow \infty$, we have
$$
\mathbb{E} F_{\mathbf{z}}^2(X(t)) \leq \mathbb{E} F_{\mathbf{z}}^2(X(0))+\frac{K_1}{c_1}\,.
$$
Therefore, we have
\begin{align}
\mathbb{E} F_{\mathbf{z}}^2(X(t)) 
&\leq F_{\mathbf{z}}^2(x_0) + \frac{(B+2A)^2}{2} + \frac{18(M+B)^2}{m^2}\left(b+\beta^{-1}+\frac{2(M+B)\beta^{-1}}{m^2} \right)^2  \nonumber \\
&\qquad  + \frac{24\beta^{-1}(2bM^2+mB^2)(M+B)^2}{m^4} 
\nonumber
\\
&\qquad\qquad
+ \frac{6\beta^{-1}(M+B)^2}{m^3}\left(B+2B\sqrt{2b/m} +  \frac{2bM}{m} +  4A  \right) Md \,,
\label{pf:lm:lyapunov}
\end{align}
where $X(0) = x_0 \in \mathbb{R}^d$. 
Furthermore, we can use the the quadratic bound for $F_{\mathbf{z}}$ in \eqref{slm:objfunc-bd} 
to get the bound for the first term:
\begin{equation*}
F_{\mathbf{z}}^2(x_0) \leq \left(\frac{M}{2}\lVert x_0 \rVert^2+B\lVert x_0 \rVert + A \right)^2 \leq \left(\frac{M}{2}R^2+BR + A \right)^2 \,.
\end{equation*}
As a result, we can compute the uniform $L^4$ bound $\mathbb{E}\lVert X(t) \rVert^4$ by using the relations in \eqref{pr:lm:x^4-bd} and \eqref{pf:lm:lyapunov}: 
\begin{equation*}
\mathbb{E}\lVert X(t) \rVert^4 \leq \frac{9}{m^2}\mathbb{E} F_{\mathbf{z}}^2(X(t)) +  \frac{9b(M+B)}{m^2}\mathbb{E}\lVert X(t) \rVert^2 + \frac{18b}{m^2}\left(\frac{B}{2} +A \right) + \frac{9b^2}{m^2}\,.
\end{equation*}
Hence, we conclude that
\begin{align}
\mathbb{E} \lVert X(t) \rVert^4 &\leq \mathcal{D}_c = \frac{9}{m^2}\left(\frac{M}{2}R^2+BR + A \right)^2 + \frac{9U+9b(M+B)\mathcal{C}_c}{m^2} \nonumber \\
& \qquad +  \frac{6M(M+B)^2}{m^3}\left(B+2B\sqrt{2b/m} +  \frac{2bM}{m} +  4A  \right) \beta^{-1}d \,,
\end{align}
with 
\begin{align*}
U&= \frac{(B+2A)^2}{2} + \frac{18(M+B)^2}{m^2}\left(b+\beta^{-1}+\frac{2(M+B)\beta^{-1}}{m^2} \right)^2 
\\
&\qquad + \frac{24\beta^{-1}(2bM^2+mB^2)(M+B)^2}{m^4} + 2bB + 2A + b^2 \,.
\end{align*}
The proof is complete. 
\end{proof}

\subsection{Proof of Lemma~\ref{lm:acceleration}}
\begin{proof}
By following the same arguments as in the proof of Theorem~4 in \cite{HHS05}, 
we have
\begin{align*}
&\int_{\mathbb{R}^{d}}|p_{\mathbf{z},J}(t,x,y)-\pi_{\mathbf{z}}(y)|dy
\\
&=\int_{\mathbb{R}^{d}}
\left|\int_{\mathbb{R}^{d}}(p_{s,\mathbf{z},J}(x,z)p_{t-s,\mathbf{z},J}(z,y)-1)\pi_{\mathbf{z}}(z)dz\right|\pi_{\mathbf{z}}(y)dy
\\
&=\int_{\mathbb{R}^{d}}
\left|\int_{\mathbb{R}^{d}}(p_{s,\mathbf{z},J}(x,z)p^{\ast}_{t-s,\mathbf{z},J}(y,z)-1)\pi_{\mathbf{z}}(z)dz\right|\pi_{\mathbf{z}}(y)dy
\\
&=\int_{\mathbb{R}^{d}}
\left|\int_{\mathbb{R}^{d}}p^{\ast}_{t-s,\mathbf{z},J}(y,z)p_{s,\mathbf{z},J}(x,z)\pi_{\mathbf{z}}(z)dz
-\int_{\mathbb{R}^{d}}p_{s,\mathbf{z},J}(x,z)\pi_{\mathbf{z}}(z)dz\right|\pi_{\mathbf{z}}(y)dy
\\
&=\int_{\mathbb{R}^{d}}|\mathcal{T}_{\mathbf{z},J}^{\ast}(t-s)(p_{s,\mathbf{z},J}(x,\cdot))(y)-\pi_{\mathbf{z}}(p_{s,\mathbf{z},J}(x,\cdot))|\pi_{\mathbf{z}}(y)dy
\\
&\leq\left(\int_{\mathbb{R}^{d}}|\mathcal{T}_{\mathbf{z},J}^{\ast}(t-s)(p_{s,\mathbf{z},J}(x,\cdot))(y)
-\pi_{\mathbf{z}}(p_{s,\mathbf{z},J}(x,\cdot))|^{2}\pi_{\mathbf{z}}(y)dy\right)^{1/2}
\\
&\leq C_{\mathbf{z}, J}\cdot e^{|\lambda_{\mathbf{z},J}|s}\Vert p_{s,\mathbf{z},J}(x,\cdot)-1\Vert
e^{\lambda_{\mathbf{z},J}t},
\end{align*}
for any given $0<s<t$, 
where $C_{\mathbf{z},J}$ is from the
spectral inequality \eqref{eq:L2-contraction-SG}
and $p_{s,\mathbf{z},J}(x,y):=p_{\mathbf{z},J}(s,x,y)/\pi_{\mathbf{z}}(y)$,
where $\mathcal{T}_{\mathbf{z},J}^{\ast}$ is the adjoint of
the semigroup $\mathcal{T}_{\mathbf{z},J}$, that is defined as
for any $x \in \mathbb{R}^d$ and $s \in \mathbb{R}^+$,
\begin{equation*}
\mathcal{T}_{\mathbf{z},J}(s)f(x) = \int_{\mathbb{R}^d} p_{\mathbf{z},J}(s,x,y)f(y)dy \,,
\end{equation*}
where $\mathcal{T}_{\mathbf{z},J}(s) = e^{s\mathcal{L}_{J}}$ and $\mathcal{L}_J$ is the corresponding infinitesimal generator. 

According to the proof of Theorem~4 in \cite{HHS05}, 
\begin{equation}
\Vert p_{s,\mathbf{z},J}(x,\cdot)-1\Vert
\leq\Vert p_{s,\mathbf{z},J}(x,\cdot)\Vert+1
\leq\frac{C_{\mathbf{z},J}(N,x)}{\pi_{\mathbf{z}}(B(x,N/2))}+1,
\end{equation}
where $C_{\mathbf{z},J}(N,x)$ is the Harnack constant
in the following Harnack inequality:
\begin{equation}
\sup_{y\in B(x,N/2)}\mathcal{T}_{\mathbf{z},J}(s)f(y)
\leq
C_{\mathbf{z},J}(N,x)
\inf_{y\in B(x,N/2)}\mathcal{T}_{\mathbf{z},J}(2s)f(y),
\end{equation}
for any $x\in\mathbb{R}^{d}$, $N>0$, and any $f$ with $\pi_{\mathbf{z}}(f)=1$ and $f\geq 0$.

By applying Theorem 2.4. in \cite{BRS2008}\footnote{In \cite{BRS2008}, it is backward in time,
and by taking $t\mapsto-t$, we can apply their result forward in time.}, we have
\begin{equation}
\sup_{(y,t)\in Q^{\ast}(r)}\mathcal{T}_{\mathbf{z},J}(t)f(y)
\leq C(d,\alpha,\gamma,\tilde{B},r)\inf_{(y,t)\in Q(r)}\mathcal{T}_{\mathbf{z},J}(t)f(y),
\end{equation}
where
\begin{equation}
C(d,\alpha,\gamma,\tilde{B},r)=\exp\left\{c(d)\left(1+\alpha^{-1}+(\alpha^{-1/2}+\alpha^{-1})(\tilde{B}r+\gamma)\right)^{2}\right\},    
\end{equation}
for some constant $c(d)$ depending only on $d$, with
\begin{equation}
\alpha=\beta^{-1},
\qquad
\gamma=\sqrt{\beta^{-1}d},
\qquad
\tilde{B}=\sup_{y\in Q}\Vert A_{J}\nabla F_{\mathbf{z}}(y)\Vert,
\end{equation}
where 
\begin{equation}
Q(r)=B(x,r)\times(t_{0},t_{0}+r^{2}),
\qquad
Q^{\ast}(r)=B(x,r)\times(t_{0}+7r^{2},t_{0}+8r^{2}),
\end{equation}
provided that $x\in Q(3r)\subset Q$, and $Q(r),Q^{\ast}(r)\subset Q$,
with $Q:=B(x,4r)\times(0,1)$, where
\begin{equation}
r=N/2,
\qquad
t_{0}=6r^{2},
\end{equation}
so that 
\begin{equation}
s\in(t_{0},t_{0}+r^{2})=(6r^{2},7r^{2}),
\qquad
2s\in(t_{0}+7r^{2},t_{0}+8r_{2})=(13r^{2},14r^{2}),
\end{equation}
and we can take
\begin{equation}
s=\frac{27}{4}r^{2},
\end{equation}
and $r=1/4$ so that $Q(3r),Q(r),Q^{\ast}(r)\subset Q$.

Hence, we conclude that with $s=\frac{27}{4}r^{2}=\frac{27}{64}$, 
we have
\begin{align}
C_{\mathbf{z},J}(N,x)
&\leq
\exp\left\{c(d)\left(1+\beta+(\beta^{1/2}+\beta)\left(\frac{1}{4}\sup_{y\in B(x,1)}\Vert A_{J}\nabla F_{\mathbf{z}}(y)\Vert+\sqrt{\beta^{-1}d}\right)\right)^{2}\right\}\nonumber
\\
&\leq
\exp\left\{c(d)\left(1+\beta+(\beta^{1/2}+\beta)\left(\frac{1}{4}\Vert A_{J}\Vert\sup_{y\in B(x,1)}(M\Vert y\Vert+B)+\sqrt{\beta^{-1}d}\right)\right)^{2}\right\}\nonumber
\\
&\leq
\exp\left\{c(d)\left(1+\beta+(\beta^{1/2}+\beta)\left(\frac{1}{4}\Vert A_{J}\Vert(M\Vert x\Vert+M+B)+\sqrt{\beta^{-1}d}\right)\right)^{2}\right\}\,,
\end{align}
with
\begin{equation}
N=2r=\frac{1}{2}.
\end{equation}

Next, let us provide a lower bound for $\pi_{\mathbf{z}}(B(x,N/2))$, 
where $B(x,r)$ is an $\mathbb{R}^d$ Euclidean ball centered at $x$ with radius equals to $r$.
For a fixed $x \in \mathbb{R}^d$, we can compute 
$$
\pi_{\mathbf{z}}(B(x,N/2)) = \frac{1}{\Lambda_{\mathbf{z}}}\int_{\lVert y-x \rVert \leq N/2} e^{-\beta F_{\mathbf{z}}(y)}dy = \frac{1}{\Lambda_{\mathbf{z}}}\int_{\lVert w \rVert \leq N/2} e^{-\beta F_{\mathbf{z}}(w+x)}dw  \,,
$$
In addition, $F_{\mathbf{z}}$ function has quadratic bounds in Lemma~\ref{slm:quadratic-bd},
\begin{equation*}
\frac{m}{3}\lVert w+x \rVert^2 - \frac{b}{2} \log3 \leq F_{\mathbf{z}}(w+x) \leq \frac{M+B}{2}\lVert w+x \rVert^2 + \frac{B}{2} + A \leq  (M+B)\left(\lVert w \rVert^2 + \lVert x \rVert^2\right) + \frac{B}{2} + A \,.
\end{equation*}
It then follows that
\begin{align*}
\pi_{\mathbf{z}}(B(x,N/2)) 
&\geq \frac{e^{-\beta(M+B)\left( \lVert x \rVert^2 + \frac{B}{2} + A\right)}}{\Lambda_{\mathbf{z}}} \int_{\lVert w \rVert \leq N/2} e^{-\beta (M+B)\lVert w \rVert^2}dw 
\\
&\geq \frac{e^{-\beta(M+B)\left( \lVert x \rVert^2 + \frac{B}{2} + A+\frac{N^{2}}{4}\right)}}{\Lambda_{\mathbf{z}}} \frac{(2\pi)^{d/2}}{\Gamma(d/2+1)}\left(\frac{N}{2}\right)^{2} \,.
\end{align*}
The normalized constant $\Lambda_{\mathbf{z}}$ is bounded by using Gaussian integral and the quadratic bounds for $F$ in Lemma~\ref{slm:quadratic-bd},
$$
\Lambda_{\mathbf{z}} = \int_{\mathbb{R}^d} e^{-\beta F_{\mathbf{z}}(y)} dy \leq e^{\beta b(\log3 )/2} \int_{\mathbb{R}^d} e^{-\frac{m\beta}{3}\lVert y \rVert^2} dy \leq \left(\frac{3\pi}{m\beta} \right)^{d/2}e^{\beta b(\log3 )/2} \,.
$$ 
Therefore, we have
\begin{align}
\pi_{\mathbf{z}}(B(x,N/2)) 
\geq 
\left(\frac{3\pi}{m\beta} \right)^{-d/2}e^{-\beta b(\log3 )/2}
e^{-\beta(M+B)\left( \lVert x \rVert^2 + \frac{B}{2} + A+\frac{N^{2}}{4}\right)} \frac{(2\pi)^{d/2}}{\Gamma(d/2+1)}\left(\frac{N}{2}\right)^{2}.
\end{align}

Therefore, with $N=\frac{1}{2}$, $s=\frac{27}{64}$, we have
\begin{equation}
\int_{\mathbb{R}^{d}}|p_{\mathbf{z},J}(t,x,y)-\pi_{\mathbf{z}}(y)|dy
\leq C_{\mathbf{z}, J}\cdot g_{\mathbf{z},J}(\Vert x\Vert)\cdot e^{\lambda_{\mathbf{z},J}t},    
\end{equation}
where
\begin{align}
&e^{|\lambda_{\mathbf{z},J}|s}\Vert p_{s,\mathbf{z},J}(x,\cdot)-1\Vert
\nonumber
\\
&\leq
e^{|\lambda_{\mathbf{z},J}|\frac{27}{64}}
\left(\frac{e^{c(d)\left(1+\beta+(\beta^{1/2}+\beta)\left(\frac{1}{4}\Vert A_{J}\Vert(M\Vert x\Vert+M+B)+\sqrt{\beta^{-1}d}\right)\right)^{2}}}
{\left(\frac{3\pi}{m\beta} \right)^{-d/2}e^{-\beta b(\log3 )/2}
e^{-\beta(M+B)\left( \lVert x \rVert^2 + \frac{B}{2} + A + \frac{1}{16} \right)}\frac{(2\pi)^{d/2}}{\Gamma(d/2+1)}\frac{1}{16}}+1\right).
\end{align}

Next, let us compute constant $c(d)$ in $g_{\mathbf{z},J}(\lVert x \rVert)$. Inferring from the proof of Theorem 2.4 in \cite{BRS2008},
\begin{equation*}
c(d) = \tilde{c}(d)C^2(d) = \frac{\log(3 \cdot 2^{d+1})C^2(d)}{\tilde{\lambda}(d)}\,,
\end{equation*}
where $C(d)$ is defined in Lemma 2.4 \cite{BRS2008}, 
and the last equation from the proof given in section 6 of \cite{AS1967} gives the relation between $C(d)$ and $d$ as follows:
\begin{equation}
C(d) = 4\sqrt{2} \cdot 2^{-3d/2} \,.
\end{equation}
 Then $\tilde{\lambda}(d) = \frac{B^2_1(d,\beta)}{\lambda(d)}$ in Corollary 2.2 \cite{BRS2008} follows the well-known Moser lemma in \cite{Moser1964}. Inferring from Main Lemma in \cite{Moser1964} \footnote{The discussions from Page 128 to Page 130 \cite{Moser1964} indicate that $2c\epsilon < \frac{3}{4}$ and $0<\epsilon<\frac{1}{1+\tilde{M}^{d+2}}$ with $\tilde{M}\geq 2$. We take $c\epsilon = \frac{1}{4}$ and $\epsilon = \frac{1}{2(1+\tilde{M}^{d+2})}$ here. Also, we can see from Main Lemma and the proof of Theorem 4 in Page 124 \cite{Moser1964} that $\tilde{\lambda}(d)$ in \cite{BRS2008} equals to $\alpha$ in \cite{Moser1964} and it follows that $\Phi(s) = c^{-1}e^{\tilde{\lambda}(d)s}$.}, we get
\begin{equation*}
\tilde{\lambda}(d) =-\log\left( \frac{8\Phi(1)}{1+\tilde{M}^{2+d}} \right), \qquad \text{with }\qquad \tilde{M} \geq 2. 
\end{equation*}
And $\Phi(s)$ is a continuous function which is equal to zero for any $s\leq 0$ and it is strictly increasing for any positive $s$. For example, \cite{Moser1964} takes $\Phi(s) = \sqrt{s}$ and $\Phi(s) = \log(1+s)$ in their proofs.
Therefore, we have
\begin{equation}
\tilde{\lambda}(d)
=\log\left( \frac{1+\tilde{M}^{2+d}}{8\Phi(1)} \right)
\geq\tilde{C}_{0}d,
\end{equation}
for some universal constant $\tilde{C}_{0}>0$.
Therefore, we have
\begin{equation}
c(d)
=\frac{\log(3 \cdot 2^{d+1})C^2(d)}{\tilde{\lambda}(d)}
\leq
\frac{\log(3 \cdot 2^{d+1})32\cdot 2^{-3d}}{\tilde{C}_{0}d}
\leq\tilde{C}2^{-3d},
\end{equation}
for some universal constant $\tilde{C}>0$.

Hence, with $N=\frac{1}{2}$ and $s=\frac{27}{64}$,
we conclude that
\begin{equation}
\int_{\mathbb{R}^{d}}|p_{\mathbf{z},J}(t,x,y)-\pi_{\mathbf{z}}(y)|dy
\leq C_{\mathbf{z}, J}\cdot g_{\mathbf{z},J}(\Vert x\Vert)\cdot e^{\lambda_{\mathbf{z},J}t},    
\end{equation}
with
\begin{equation}
g_{\mathbf{z},J}(\Vert x\Vert)
=e^{|\lambda_{\mathbf{z},J}|\frac{27}{64}}
\left(\frac{16\Gamma(\frac{d}{2}+1)e^{\tilde{C}2^{-3d}\left(1+\beta+(\beta^{1/2}+\beta)\left(\frac{1}{4}\Vert A_{J}\Vert(M\Vert x\Vert+M+B)+\sqrt{\beta^{-1}d}\right)\right)^{2}}}
{\left(\frac{3}{2m\beta} \right)^{-d/2}e^{-\beta b(\log3 )/2}
e^{-\beta(M+B)\left( \lVert x \rVert^2 + \frac{B}{2} + A + \frac{1}{16} \right)}}+1\right) \,,
\end{equation}
for some universal constant $\tilde{C}>0$.
The proof is complete.
\end{proof}

\subsection{Proof of Lemma~\ref{lm:Ktail}}
\begin{proof}
First, we have the following estimate:
\begin{align}
& \int_{\mathbb{R}^d} \int_{\lVert x \rVert > K}\lVert x \rVert^2\left| p_{\mathbf{z},J}(t,w,x)-\pi_{\mathbf{z}}(x) \right| dx \, \nu_{\mathbf{z},0}(dw)  \nonumber\\
& \quad \leq \int_{\lVert x \rVert > K}\lVert x \rVert^2  \int_{\mathbb{R}^d} p_{\mathbf{z},J}(t,w,x) \nu_{\mathbf{z},0}(dw) \, dx +\int_{\lVert x \rVert > K}\lVert x \rVert^2\pi_{\mathbf{z}}(dx) \nonumber \\
& \quad =  \int_{\lVert x \rVert>K} \lVert x \rVert^2 \nu_{\mathbf{z},k \eta}(dx) + \int_{\lVert x \rVert > K}\lVert x \rVert^2\pi_{\mathbf{z}}(dx)  \nonumber \\
& \quad \leq \frac{\mathbb{E}_{\mathbf{z}}\lVert X(k\eta) \rVert^4}{K^2} + 
\frac{\mathbb{E}_{\mathbf{z}}\lVert X(\infty) \rVert^4}{K^2}\nonumber
\\
&\quad \leq \frac{\mathbb{E}_{\mathbf{z}}\lVert X(k\eta) \rVert^4}{K^2} + 
\frac{\limsup_{t\rightarrow\infty}\mathbb{E}_{\mathbf{z}}\lVert X(t) \rVert^4}{K^2}
\end{align}
where the first inequality is due the fact that $\lvert a-b \rvert \leq a+b, a,b>0$, 
and the second inequality is a result of Chebyshev's inequality
and the stationary distribution of $X(t)$ process is $\pi_{\mathbf{z}}$,
and the third inequality follows from Fatou's lemma. 
By the uniform $L^4$ bound in Lemma~\ref{lm:uniform-L4}, we get
\begin{equation}
\int_{\mathbb{R}^d} \int_{\lVert x \rVert > K}\lVert x \rVert^2\left| p_{\mathbf{z},J}(t,w,x)-\pi_{\mathbf{z}}(x) \right| dx \, \nu_{\mathbf{z},0}(dw)
\leq\frac{2\mathcal{D}_{c}}{K^{2}},
\end{equation}
with $\mathcal{D}_c$ being a constant for the uniform $L^4$ bound defined in \eqref{lm:const:Dc}.
Next, we take
\begin{equation}
K = e^{\lvert \lambda_{\mathbf{z},J} \rvert k\eta/4},\label{pf:lm:const:K} 
\end{equation}
so that
\begin{equation}
\int_{\mathbb{R}^d} \int_{\lVert x \rVert > K}\lVert x \rVert^2\left| p_{\mathbf{z},J}(t,w,x)-\pi_{\mathbf{z}}(x) \right| dx \, \nu_{\mathbf{z},0}(dw)
\leq
2\mathcal{D}_{c}e^{-\lvert \lambda_{\mathbf{z},J} \rvert k\eta/2}.
\end{equation}
The proof is complete.
\end{proof}

\section{Performance Bound for the Population Risk Minimization} \label{sec:p-r-m}
To obtain the performance bound for the population risk minimization in \eqref{eqn:populationrisk}, we control the expected population risk of $X_k$ in \eqref{NSGLD}: $\mathbb{E}F(X_k)-F^*.$ To this end, in addition to the empirical risk, one has to account for the differences between the finite sample size problem \eqref{eqn:empiricalsum} and the original problem \eqref{eqn:populationrisk}. In particular we have the following corollary. Define the uniform spectral gap $\lambda_{*,J} = \inf_{\mathbf{z}\in\mathcal{Z}^n} |\lambda_{\mathbf{z},J}| $. 

\begin{corollary}[Population risk minimization] \label{cor:PRM-bd}
Consider the iterates $\{X_k\}$ of the NSGLD algorithm in \eqref{NSGLD}.
Under the setting in Corollary~\ref{cor:NSGLD2gibbs}, for some constant $\beta \geq 3/m$ and $\varepsilon>0$, the upper bound for the expected population risk of $X_k$ is given by
\begin{equation}\label{pop:inequality}
\mathbb{E}F(X_k)-F^*  
\leq \overline{\mathcal{I}}_0(J,\varepsilon)+\mathcal{I}_1(J,\varepsilon)+\mathcal{I}_2+\mathcal{I}_3(n)\,,
\end{equation}
provided that 
$$
k\eta = \frac{2}{\lambda_{*,J}}\log\left(\frac{1}{\varepsilon}\right) \geq e \,, 
$$
and the step size $\eta$ satisfies
$$
\eta \leq \min\left\{1,\frac{m^2}{(m^2+ 8M^2)M\lVert A_J\rVert^2} ,\frac{\varepsilon^4}{4(\log(1/\varepsilon))^2\lVert A_J \rVert^4} \frac{\lambda_{\ast,J}^2}{\lambda_{\ast,J=0}^2}\right\}  \,.
$$
Here, 
\begin{equation}
 \overline{\mathcal{I}}_0(J,\varepsilon) : = \sup_{\mathbf{z} \in \mathcal{Z}^n} \mathcal{I}_0(\mathbf{z},J,\varepsilon)= \left[ \left( \frac{M+B}{2} +\frac{B}{2}+A\right) \sup_{\mathbf{z} \in \mathcal{Z}^n}\hat{C}_{\mathbf{z},J} + (M+B)\mathcal{D}_c\right]
 \cdot \varepsilon  \,,
 \label{cor:const:I0-bar}
\end{equation}
\begin{equation}
\mathcal{I}_{1}(J,\varepsilon)= \left(M\sqrt{\mathcal{C}_d}+B \right)\left( \hat{C_0} \frac{\varepsilon}{\sqrt{\lambda_{\ast,J=0}}}+ \hat{C_1}\delta^{1/4}\sqrt{\frac{2\log(1/\varepsilon)}{\lvert \lambda_{\ast,J} \rvert}}  \lVert A_J \rVert  \right)\sqrt{\log\left( \frac{2\log(1/\varepsilon)}{\lvert \lambda_{\ast,J} \rvert} \right)}  \,,
\label{cor:const:I1-star} 
\end{equation}
$\mathcal{I}_2$ is defined in \eqref{cor:const:I2},
and $\mathcal{I}_3(n)$ is provided by Proposition 12 of \cite{RRT17}:
$$
\mathcal{I}_{3}(n) :=\frac{4\left( \frac{M^2}{m}(b+d/\beta)+B^2 \right)\beta c_{LS}}{n} \, \text{with}\, c_{LS}\leq \frac{2m^2+8M^2}{m^2M\beta}+\frac{1}{\lambda_{*, J=0}}\left(\frac{6M(d+\beta)}{m}+2\right)\,.
$$ 
\end{corollary}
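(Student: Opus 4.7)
The plan is to follow the standard four-term decomposition used for population risk bounds in the SGLD analysis of \cite{RRT17} and then show that the non-reversible modification only affects the ``dynamics'' terms, which are already handled by Corollary~\ref{cor:NSGLD2gibbs}, while the ``statistical'' terms (generalization and Gibbs concentration) are intrinsic to the stationary distribution $\pi_{\mathbf{z}}$ and hence unchanged. Concretely, I would write
\begin{align*}
\mathbb{E}F(X_k) - F^{*}
&= \bigl(\mathbb{E}F(X_k) - \mathbb{E}F_{\mathbf{z}}(X_k)\bigr) + \bigl(\mathbb{E}F_{\mathbf{z}}(X_k) - \mathbb{E}_{X \sim \pi_{\mathbf{z}}} F_{\mathbf{z}}(X)\bigr) \\
&\quad + \bigl(\mathbb{E}_{X \sim \pi_{\mathbf{z}}} F_{\mathbf{z}}(X) - \mathbb{E}\min_{x\in\mathbb{R}^d} F_{\mathbf{z}}(x)\bigr) + \bigl(\mathbb{E}\min_{x\in\mathbb{R}^d} F_{\mathbf{z}}(x) - F^{*}\bigr)\,,
\end{align*}
and treat the four terms separately.

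The fourth term is non-positive: letting $x^{*}$ denote a global minimizer of the population risk $F$, we have $\min_{x} F_{\mathbf{z}}(x) \le F_{\mathbf{z}}(x^{*})$ pointwise in $\mathbf{z}$, and taking expectation with respect to $\mathbf{z}\sim\mathcal{D}^{n}$ gives $\mathbb{E}\min_{x} F_{\mathbf{z}}(x) \le \mathbb{E}F_{\mathbf{z}}(x^{*}) = F(x^{*}) = F^{*}$, so this term may be dropped. The third term is bounded by $\mathcal{I}_{2}$ directly from the cited Proposition~11 of \cite{RRT17}, since it only concerns the equilibrium $\pi_{\mathbf{z}}$ and the argument there does not use reversibility.

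For the second term, I would invoke Corollary~\ref{cor:NSGLD2gibbs} uniformly over $\mathbf{z}\in\mathcal{Z}^n$. The key observation is that the constants $\mathcal{C}_d, \hat C_0, \hat C_1, \mathcal{D}_c$ from Lemmas~\ref{lm:uniform-L2}, \ref{lm:exponential}, \ref{lm:approximation} and \ref{lm:uniform-L4} are all uniform in $\mathbf{z}$, since Assumptions~\ref{A1}--\ref{A5} hold uniformly in $z\in\mathcal{Z}$. Replacing $|\lambda_{\mathbf{z},J}|$ by its infimum $\lambda_{*,J} := \inf_{\mathbf{z}\in\mathcal{Z}^n}|\lambda_{\mathbf{z},J}|$ (and analogously for $J=0$) yields the uniform choice of step size and iteration count stated in the corollary, and replaces $\hat C_{\mathbf{z},J}$ inside $\mathcal{I}_{0}$ by $\sup_{\mathbf{z}\in\mathcal{Z}^n}\hat C_{\mathbf{z},J}$ to give $\overline{\mathcal{I}}_{0}(J,\varepsilon)$ in \eqref{cor:const:I0-bar} and $\mathcal{I}_{1}(J,\varepsilon)$ in \eqref{cor:const:I1-star}. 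One has to check that the constraint $k\eta = \frac{2}{\lambda_{*,J}}\log(1/\varepsilon) \ge e$ with $\eta$ satisfying the tightened upper bound (using $\lambda_{*,J}/\lambda_{*,J=0}$ in place of $\lambda_{\mathbf{z},J}/\lambda_{\mathbf{z},J=0}$) is compatible with Corollary~\ref{cor:NSGLD2gibbs} for every $\mathbf{z}$, which it is because $k\eta \ge 2|\lambda_{\mathbf{z},J}|^{-1}\log(1/\varepsilon)$ holds for every $\mathbf{z}$ given the uniform choice.

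The first (generalization) term is the only non-routine piece and is where I would import the stability argument from \cite{RRT17}. Their Proposition~12 bounds $|\mathbb{E}F(X_k) - \mathbb{E}F_{\mathbf{z}}(X_k)|$ through the uniform stability of the Gibbs distribution $\pi_{\mathbf{z}}$, which in turn relies on a log-Sobolev inequality for $\pi_{\mathbf{z}}$ with constant $c_{LS}$. The crucial point is that $\pi_{\mathbf{z}} \propto e^{-\beta F_{\mathbf{z}}}$ is identical for the reversible SDE \eqref{reversibleSDE} and the non-reversible SDE \eqref{nonreversibleSDE}, so $c_{LS}$ is an intrinsic property of $\pi_{\mathbf{z}}$ and the estimate $c_{LS} \le \tfrac{2m^2+8M^2}{m^2M\beta} + \tfrac{1}{\lambda_{*,J=0}}(\tfrac{6M(d+\beta)}{m}+2)$ from \cite{RRT17} (stated in terms of the reversible spectral gap) applies verbatim; the non-reversibility $J\ne 0$ enters only through the bound on the second term via Corollary~\ref{cor:NSGLD2gibbs}. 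I expect the main subtlety to be precisely this point: the statement advertises the population bound under the non-reversible dynamics, but the $\mathcal{I}_3(n)$ term must still be stated in terms of $\lambda_{*,J=0}$, and I would spell out once why this is correct (the generalization bound is derived by comparing $\mu_{\mathbf{z},k}$ to $\pi_{\mathbf{z}}$, where the Wasserstein comparison uses Lemma~\ref{lm:approximation} with $J$ but the internal stability of $\pi_{\mathbf{z}}$ uses only $\pi_{\mathbf{z}}$ itself). Combining the four bounds yields \eqref{pop:inequality}.
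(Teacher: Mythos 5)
Your overall architecture is sound, and several of your observations are correct and match the paper's proof: the uniformization over $\mathbf{z}$ (replacing $\lambda_{\mathbf{z},J}$ by $\lambda_{*,J}$, $\hat{C}_{\mathbf{z},J}$ by its supremum, and checking that $k\eta \geq 2|\lambda_{\mathbf{z},J}|^{-1}\log(1/\varepsilon)$ for every $\mathbf{z}$), the treatment of the Gibbs-suboptimality term via $\mathcal{I}_2$, and the observation that the final term $\mathbb{E}\min_x F_{\mathbf{z}}(x) - F^*$ is non-positive. However, there is a genuine gap in your first term. You decompose so that the generalization error is $\mathbb{E}F(X_k)-\mathbb{E}F_{\mathbf{z}}(X_k)$, i.e.\ the generalization error of the \emph{NSGLD iterate} $X_k \sim \mu_{\mathbf{z},k}$, and you claim Proposition~12 of \cite{RRT17} bounds this quantity. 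It does not. That proposition (restated as Lemma~\ref{slm:uniform-stability} here) establishes uniform stability of the \emph{Gibbs measure} $\pi_{\mathbf{z}}$ under replacement of a single data point, which yields a bound only on $\bigl|\mathbb{E}F(\hat{X}^*)-\mathbb{E}F_{\mathbf{Z}}(\hat{X}^*)\bigr|$ for $\hat{X}^* \sim \pi_{\mathbf{Z}}$. Nothing in that argument controls how the law $\mu_{\mathbf{z},k}$ of the $k$-th iterate changes when one coordinate of $\mathbf{z}$ changes, so your first term is left unbounded as stated. (Your remark that $c_{LS}$ is intrinsic to $\pi_{\mathbf{z}}$ and independent of $J$ is correct, but it supports stability of $\pi_{\mathbf{z}}$, not of $\mu_{\mathbf{z},k}$.)

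The paper avoids this by ordering the decomposition differently: it places the Gibbs sample in the middle, writing
\begin{equation*}
\mathbb{E}F(X_k)-F^* = \bigl(\mathbb{E}F(X_k)-\mathbb{E}F(\hat{X}^*)\bigr) + \bigl(\mathbb{E}F(\hat{X}^*)-\mathbb{E}F_{\mathbf{Z}}(\hat{X}^*)\bigr) + \bigl(\mathbb{E}F_{\mathbf{Z}}(\hat{X}^*)-F^*\bigr)\,,
\end{equation*}
so that the stability/generalization term involves only $\pi_{\mathbf{Z}}$ (where Proposition~12 applies verbatim) and the dynamics term $\mathbb{E}F(X_k)-\mathbb{E}F(\hat{X}^*)$ is handled by the uniform-in-$\mathbf{z}$ application of Corollary~\ref{cor:NSGLD2gibbs}, exactly the machinery you correctly deploy for your second term. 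To repair your proof you should either adopt this ordering, or supply a separate algorithmic-stability argument for $\mu_{\mathbf{z},k}$ (which is not available in the paper and would be a substantially harder result); simply inserting $\hat{X}^*$ into your first term by a further triangle inequality would also work but then duplicates the Wasserstein comparison and worsens the constants relative to the stated bound.
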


\begin{proof}[Proof of Corollary~\ref{cor:PRM-bd}]
Let $X_k$ follow the probability law $\mathcal{L}(X_k|\mathbf{Z}=\mathbf{z})=\mu_{\mathbf{z},k}  $ and the samples drawn by Gibbs algorithm $\pi_{\mathbf{z}} =\mathcal{L}(\hat{X}^*|\mathbf{Z}=\mathbf{z}) $ with $\mathbf{Z} = (Z_1,Z_2,...,Z_n) $ being a random variable from an unknown distribution and $\mathbf{z} = (z_1,z_2,...,z_n)$ being a deterministic data sample. The decomposition for population risk minimization problem admits the following inequality,
\begin{equation}\label{three:terms}
\mathbb{E} F(X_k)-F^*  = \left(\mathbb{E}F(X_k)-\mathbb{E}F(\hat{X}^*) \right) + \left(\mathbb{E}F(\hat{X}^*)-\mathbb{E}F_{\mathbf{Z}}(\hat{X}^*)\right) + \left(\mathbb{E}F_{\mathbf{Z}}(\hat{X}^*)- F^* \right)\,.
\end{equation}
We can write the first term in \eqref{three:terms} as the following identity over all possible training data set $\mathbf{z}$ in $\mathcal{Z}^n$.
\begin{equation}
\mathbb{E}F(X_k)-\mathbb{E}F(\hat{X}^*)=\int_{\mathcal{Z}^n}P^{\otimes n} (d\mathbf{z})\left( \int_{\mathbb{R}^d}F_{\mathbf{z}}(x)\mu_{\mathbf{z},k}(dx) - \int_{\mathbb{R}^d}F_{\mathbf{z}}(x)\pi_{\mathbf{z}}(dx) \right)\,,
\end{equation}
where $P^{\otimes n}$ is the product measures over the independent and identically distributed random variables $Z_1, Z_2, ..., Z_n  $ supported on $\mathcal{Z}^n $. 
To find an upper bound for the first term, we can consider an uniform bound over $\mathbf{z} \in \mathcal{Z}^n$ by using Corollary~\ref{cor:NSGLD2gibbs}. For a deterministic $\mathbf{z} \in \mathcal{Z}^n$, Corollary~\ref{cor:NSGLD2gibbs} states that, 
$$
\left|\mathbb{E}_{X \sim \mu_{\mathbf{z},k}} F_{\mathbf{z}}(X) - \mathbb{E}_{X \sim \pi_{\mathbf{z}}} F_{\mathbf{z}}(X) \right| \leq \mathcal{I}_{0}(\mathbf{z},J, \varepsilon) + \mathcal{I}_1(\mathbf{z},J, \varepsilon) \,.
$$
Recall we define $\mathcal{I}_{0}(\mathbf{z},J, \varepsilon)$ in \eqref{thm:const:I0} and we have
$$
\sup_{\mathbf{z} \in \mathcal{Z}^n}\hat{C}_{\mathbf{z},J}
=\sup_{\mathbf{z} \in \mathcal{Z}^n}C_{\mathbf{z},J}g_{\mathbf{z},J}(R)
\leq\sup_{\mathbf{z}\in \mathcal{Z}^n}C_{\mathbf{z},J}g_{J}(R)=:\overline{C}_{J},
$$
where $g_{J}(R)=\sup_{\mathbf{z}\in \mathcal{Z}^n}g_{\mathbf{z},J}(R)$.
Therefore, we can bound $\sup_{\mathbf{z}\in\mathcal{Z}^{n}}\mathcal{I}_{0}(\mathbf{z},J,\varepsilon)$ by:
\begin{align}
& \overline{\mathcal{I}}_0(J,\varepsilon): =\left[  \overline{C}_{J}\left( \frac{M+B}{2} +\frac{B}{2}+A\right) + (M+B)\mathcal{D}_c\right]\cdot \varepsilon\,. 
\end{align}
It follows that we can bound the first term in \eqref{three:terms} as
$$
\left| \mathbb{E}F(X_k)-\mathbb{E}F(\hat{X}^*) \right| \leq \overline{\mathcal{I}}_0(J,\varepsilon) + \mathcal{I}_1(J,\varepsilon) \,,
$$
where $\mathcal{I}_{1}(J,\varepsilon)$ is given in Corollary~\ref{cor:ERM-bd}, which uniformly
bounds $\mathcal{I}_{1}(\mathbf{z},J,\varepsilon)$.

The second term in \eqref{three:terms} is the generalization error of Gibbs algorithm that bounded in Lemma~\ref{slm:uniform-stability}, also see Proposition 12 in Raginsky \emph{et al.} \cite{RRT17}. In our notation, this part is bounded by $\mathcal{I}_3(n)$ where $n$ is the size of training set,
$$
\left| \mathbb{E}F(\hat{X}^*)-\mathbb{E}F_{\mathbf{Z}}(\hat{X}^*) \right|\leq \frac{4\left( \frac{M^2}{m}(b+d/\beta)+B^2 \right)\beta c_{LS}}{n} = \mathcal{I}_3(n) \,,
$$
The third term in \eqref{three:terms} is bounded by:
\begin{align*}
\mathbb{E}F_{\mathbf{Z}}(\hat{X}^*)- F^*
&= \mathbb{E}\left[F_{\mathbf{Z}}(\hat{X}^*) - \min_{x\in\mathbb{R}^d}F_{\hat{\mathbf{Z}}}(x)\right]+\mathbb{E}\left[\min_{x\in\mathbb{R}^d}F_{\hat{\mathbf{Z}}}(x)-F_{\hat{\mathbf{Z}}}(x^{\pi})\right] 
\\
&\leq \mathbb{E}\left[F_{\hat{\mathbf{Z}}}(X^{\pi}) - \min_{x\in\mathbb{R}^d}F_{\hat{\mathbf{Z}}}(x)\right] \leq \mathcal{I}_2 = \frac{d}{2\beta} \log\left( \frac{eM}{m} \left( \frac{b\beta}{d}+1 \right)\right) \,,
\end{align*}
where $\mathcal{I}_2$ is defined in \eqref{cor:const:I2} from Proposition 11, Raginsky \emph{et al.} \cite{RRT17}. The proof is complete.
\end{proof}

\section{Supporting Lemmas}
This lemma shows that the object function can be upper and lower bounded by the quadratic function.

\begin{lemma}[Quadratic bounds, Lemma 2 in \cite{RRT17}] \label{slm:quadratic-bd}
If Assumptions~\ref{A1} and ~\ref{A2} hold, for all $x \in \mathbb{R}^d  $ and $z \in \mathcal{Z}  $, for some constant $c \in[0,1)$,
\begin{equation}
\lVert \nabla f(x,z) \rVert \leq M\lVert x \rVert + B\,,  \label{slm:gradient-bd}
\end{equation}
and 
\begin{equation}
\frac{m(1-c^2)}{2}\lVert x \rVert^2 +b\log c \leq f(x,z) \leq \frac{M}{2} \lVert x \rVert^2 + B\lVert x \rVert + A\,.
\end{equation}
\end{lemma}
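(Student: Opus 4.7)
The plan is to establish the three claimed inequalities separately, drawing on the standing hypotheses: the bounds $|f(0,z)|\le A$, $\|\nabla f(0,z)\|\le B$, and non-negativity of $f$ from Assumption~\ref{A1}; $M$-smoothness from Assumption~\ref{A2}; and (implicitly, for the lower bound) $(m,b)$-dissipativity from Assumption~\ref{A3}.

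First I would prove the gradient bound $\|\nabla f(x,z)\|\le M\|x\|+B$ in one line via the triangle inequality combined with $M$-smoothness: $\|\nabla f(x,z)\|\le\|\nabla f(x,z)-\nabla f(0,z)\|+\|\nabla f(0,z)\|\le M\|x\|+B$. For the quadratic upper bound on $f$, I would then apply the fundamental theorem of calculus along the segment from $0$ to $x$, writing $f(x,z)=f(0,z)+\int_0^1\langle x,\nabla f(tx,z)\rangle\,dt$, and plug in the gradient bound together with $f(0,z)\le A$ to obtain
\begin{equation*}
f(x,z)\le A+\int_0^1\bigl(Mt\|x\|^2+B\|x\|\bigr)\,dt=\frac{M}{2}\|x\|^2+B\|x\|+A.
\end{equation*}

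The more delicate step is the quadratic lower bound $f(x,z)\ge\frac{m(1-c^2)}{2}\|x\|^2+b\log c$ for $c\in(0,1)$, which is where dissipativity enters. My approach is to parametrize along the ray through the origin: set $g(t):=f(tx,z)$, so $g'(t)=\langle x,\nabla f(tx,z)\rangle$. Multiplying and dividing by $t$ and invoking $(m,b)$-dissipativity yields $g'(t)\ge mt\|x\|^2-b/t$ for $t>0$. Integrating this lower bound from $c$ to $1$ gives
\begin{equation*}
f(x,z)-f(cx,z)\ge\frac{m(1-c^2)}{2}\|x\|^2-b\log(1/c)=\frac{m(1-c^2)}{2}\|x\|^2+b\log c,
\end{equation*}
and non-negativity $f(cx,z)\ge 0$ from Assumption~\ref{A1} absorbs the left-over term to yield the stated inequality.

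The only real subtlety — and the main (mild) obstacle — is the singularity of $b/t$ at $t=0$, which is precisely why one cannot take $c=0$ in the integration and must instead work on $[c,1]$ for a strictly positive $c$; the logarithmic penalty $b\log c$ is the price paid. The free parameter $c\in(0,1)$ can then be tuned in later applications (in the main body the choice $c=1/\sqrt{3}$ is what produces the bound $\|x\|^2\le(3/m)F_{\mathbf{z}}(x)+(3b/(2m))\log 3$ used throughout the analysis).
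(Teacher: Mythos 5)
Your proof is correct and is essentially the standard argument: the paper gives no proof of this lemma, deferring to Lemma~2 of \cite{RRT17}, whose proof proceeds exactly as you do (triangle inequality plus $M$-smoothness for the gradient bound, the fundamental theorem of calculus along the segment from $0$ to $x$ for the upper bound, and integration of the dissipativity inequality $g'(t)\ge mt\Vert x\Vert^{2}-b/t$ over $[c,1]$ together with non-negativity of $f$ for the lower bound). You are also right to flag that the lower bound genuinely uses Assumption~\ref{A3}, which the lemma's stated hypotheses omit, and that the $c=0$ case is vacuous since $b\log c=-\infty$ there.
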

For example, if we let $c=1/\sqrt{3}  $, we can have the quadratic bounds for the object function $f(x,z)  $ as
\begin{equation}
\frac{m}{3}\lVert x \rVert^2-\frac{b}{2}\log3 \leq f(x,z) \leq \frac{M}{2}\lVert x \rVert^2 + B\lVert x \rVert + A\,.   \label{slm:objfunc-bd}
\end{equation}

The next lemma implies the equivalence of 2-Wasserstein continuity for functions to weak convergence with the finite second moments.

\begin{lemma}[2-Wasserstein continuity for functions of quadratic growth, \cite{PW16}]
\label{slm:W2-quadratic-bd} 
Let $\mu,\nu  $ be two probability measures on $\mathbb{R}^d  $ with finite second moments, and let $g:= \mathbb{R}^d \rightarrow \mathbb{R}  $ be a $C^1  $ function obeying
\begin{equation*}
\lVert \nabla g(w) \rVert \leq c_1\lVert w \rVert+c_2, \qquad \text{for any} \quad w\in \mathbb{R}^d\,,
\end{equation*}
for some constants $c_1>0$ and $c_2>0$. Then 
\begin{equation*}
\left| \int_{\mathbb{R}^d}gd\mu-\int_{\mathbb{R}^d}gd\nu \right| \leq (c_1\sigma+c_2)\mathcal{W}_2(\mu,\nu)\,,
\end{equation*}
where 
\begin{equation*}
\sigma^2:=\int_{\mathbb{R}^d}\lVert w \rVert^2\mu(dw) \vee \int_{\mathbb{R}^d}\lVert w \rVert^2\nu(dw)\,.
\end{equation*}
\end{lemma}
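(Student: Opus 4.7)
The plan is a direct coupling argument based on the growth hypothesis for $\nabla g$. Let $(U,V)$ be any random coupling of $\mu$ and $\nu$ on a common probability space, so that $U\sim\mu$ and $V\sim\nu$. By the fundamental theorem of calculus along the straight segment from $V$ to $U$, we have $g(U)-g(V)=\int_{0}^{1}\langle\nabla g(V+t(U-V)),\,U-V\rangle\,dt$, and therefore $|g(U)-g(V)| \le \int_{0}^{1}\lVert \nabla g(V+t(U-V))\rVert\cdot\lVert U-V\rVert\,dt$. Plugging in the assumed bound $\lVert \nabla g(w)\rVert\le c_{1}\lVert w\rVert+c_{2}$ and noting that $V+t(U-V)=(1-t)V+tU$, this becomes $|g(U)-g(V)| \le c_{1}\int_{0}^{1}\lVert (1-t)V+tU\rVert\cdot\lVert U-V\rVert\,dt+c_{2}\lVert U-V\rVert$.

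Next I would take expectation and apply Cauchy--Schwarz pointwise in $t$. For each $t\in[0,1]$, $\mathbb{E}\bigl[\lVert(1-t)V+tU\rVert\cdot\lVert U-V\rVert\bigr]\le \sqrt{\mathbb{E}\lVert(1-t)V+tU\rVert^{2}}\cdot\sqrt{\mathbb{E}\lVert U-V\rVert^{2}}$. Convexity of $w\mapsto\lVert w\rVert^{2}$ gives $\lVert(1-t)V+tU\rVert^{2}\le(1-t)\lVert V\rVert^{2}+t\lVert U\rVert^{2}$, and since $\mathbb{E}\lVert U\rVert^{2}=\int\lVert w\rVert^{2}\mu(dw)\le\sigma^{2}$ and similarly $\mathbb{E}\lVert V\rVert^{2}\le\sigma^{2}$, we obtain $\mathbb{E}\lVert(1-t)V+tU\rVert^{2}\le\sigma^{2}$ uniformly in $t$. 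Together with the trivial bound $\mathbb{E}\lVert U-V\rVert \le \sqrt{\mathbb{E}\lVert U-V\rVert^{2}}$ from Jensen, integrating in $t$ over $[0,1]$ yields $\mathbb{E}|g(U)-g(V)|\le(c_{1}\sigma+c_{2})\sqrt{\mathbb{E}\lVert U-V\rVert^{2}}$.

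Finally, the left side $\left|\int_{\mathbb{R}^{d}}g\,d\mu-\int_{\mathbb{R}^{d}}g\,d\nu\right|=|\mathbb{E}g(U)-\mathbb{E}g(V)|\le \mathbb{E}|g(U)-g(V)|$ does not depend on the coupling, so this inequality holds for \emph{every} coupling $(U,V)$. Taking the infimum over couplings converts $\sqrt{\mathbb{E}\lVert U-V\rVert^{2}}$ into $\mathcal{W}_{2}(\mu,\nu)$ by definition, which gives the claimed estimate.

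The argument is mostly bookkeeping; the only point to be careful about is using convexity of $w\mapsto\lVert w\rVert^{2}$ to control $\mathbb{E}\lVert(1-t)V+tU\rVert^{2}$ uniformly by $\sigma^{2}$. An expansion of the square would force one to bound a cross term $\mathbb{E}[\langle U,V\rangle]$, which depends on the (unknown) coupling and would generically produce a worse constant than $c_{1}\sigma+c_{2}$. The hypothesis that $\mu,\nu$ have finite second moments ensures both sides of the inequality are finite and guarantees the existence of couplings with $\mathbb{E}\lVert U-V\rVert^{2}<\infty$, so the infimum defining $\mathcal{W}_{2}(\mu,\nu)$ is attained in a meaningful sense.
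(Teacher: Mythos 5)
Your argument is correct: the coupling plus fundamental-theorem-of-calculus decomposition, the pointwise-in-$t$ Cauchy--Schwarz step, the convexity bound $\lVert(1-t)V+tU\rVert^{2}\le(1-t)\lVert V\rVert^{2}+t\lVert U\rVert^{2}$ giving the uniform control by $\sigma^{2}$, and the final infimum over couplings all go through. The paper does not prove this lemma itself but imports it from \cite{PW16}, and your proof is essentially the standard argument given there, so there is nothing further to compare.
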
 

The last lemma shows the uniform stability of the Gibbs measure $\pi_{\mathbf{z}}$. Fix two $n$-truples $\mathbf{z}=(z_1,...,z_n), \overline{\mathbf{z}}=(\overline{z}_1,...,\overline{z}_n)  $ with card $\left| \{i:z_i \neq \overline{z}_i \} \right|=1$ which differ only in a single coordinate. It also implies that for any two different dataset $\mathbf{z}  $ and $\mathbf{\overline{z}}  $, the difference between the minimizer of $F$ for these two datasets can be bounded by selecting the size of the datasets.   
\begin{lemma}[Uniform stability, see Proposition 12 \cite{RRT17}] \label{slm:uniform-stability}
For any two $\mathbf{z}, \mathbf{\overline{z}} \in \mathcal{Z}^n  $ that differ only in a single coordinate, then
\begin{equation*}
\sup_{z\in\mathcal{Z}}\left| \int_{\mathbb{R}^d}f(x,z)\pi_{\mathbf{z}}(dx) -  \int_{\mathbb{R}^d}f(x,z)\pi_{\mathbf{\overline{z}}}(dx) \right| \leq \frac{4\left( \frac{M^2}{m}(b+d/\beta)+B^2 \right)\beta c_{LS}}{n}\,,
\end{equation*}
with 
\begin{equation*}
c_{LS}\leq \frac{2m^2+8M^2}{m^2M\beta}+\frac{1}{\lambda_*}\left(\frac{6M(d+\beta)}{m}+2\right)\,,
\end{equation*}
where $\lambda_{*}=\lambda_{\ast,J=0}$ is defined as the uniform spectral gap for the reversible Langevin SDE in \eqref{reversibleSDE}, such that
\footnote{In \cite{RRT17}, their formula for $\lambda_{\ast}$ missed $\beta^{-1}$ factor.}
\begin{equation}\label{uniform:spectralgap-rev}
\lambda_{*}=\inf_{\mathbf{z}\in\mathcal{Z}^n}\inf\left\{ \frac{\beta^{-1}\int_{\mathbb{R}^d}\lVert \nabla h\rVert^2d\pi_{\mathbf{z}}}{\int_{\mathbf{z}^d}h^2d\pi_{\mathbf{z}}} :h\in C^1(\mathbb{R}^d)\cap L^2(\pi_{\mathbf{z}}), h\neq 0, \int_{\mathbb{R}^d}hd\pi_{\mathbf{z}}=0 \right\}\,.
\end{equation}
\end{lemma}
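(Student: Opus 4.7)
The strategy is to reduce uniform stability to a 2-Wasserstein distance bound between the two Gibbs measures that differ in a single coordinate, and then to evaluate integrals against $\pi_{\mathbf{z}}$ and $\pi_{\overline{\mathbf{z}}}$ via Lemma~\ref{slm:W2-quadratic-bd}. The 2-Wasserstein distance will in turn come from Talagrand's $T_2$ inequality, which is known to follow from a logarithmic Sobolev inequality (LSI) with constant $c_{LS}$ satisfied uniformly by $\pi_{\mathbf{z}}$. The whole argument will be chained together through a ``self-improving'' loop: bound $D(\pi_{\mathbf{z}}\Vert \pi_{\overline{\mathbf{z}}})$ in terms of $\mathcal{W}_2(\pi_{\mathbf{z}},\pi_{\overline{\mathbf{z}}})$, then close with $\mathcal{W}_2^2\le 2c_{LS}D$.

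\textbf{Step 1 (uniform moment bound).} Integration by parts against $e^{-\beta F_{\mathbf{z}}}$ yields $\int\langle x,\nabla F_{\mathbf{z}}\rangle d\pi_{\mathbf{z}}=d/\beta$; combined with the $(m,b)$-dissipativity of $F_{\mathbf{z}}=\frac1n\sum f(\cdot,z_i)$ one gets the uniform bound $\sigma^{2}:=\int \|x\|^{2}d\pi_{\mathbf{z}} \le (b+d/\beta)/m$, which supplies the $\sigma$ entering Lemma~\ref{slm:W2-quadratic-bd}.

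\textbf{Step 2 (uniform LSI).} Since $F_{\mathbf{z}}$ is non-convex, Bakry-\'Emery is unavailable; I would instead invoke the Cattiaux-Guillin-Wang (equivalently Bakry-Barthe-Cattiaux-Guillin) criterion that builds an LSI from a Poincar\'e inequality plus a Lyapunov condition. The uniform Poincar\'e constant is $1/\lambda_{*}$ (the reversible uniform spectral gap defined in~\eqref{uniform:spectralgap-rev}); the Lyapunov function $W(x)=\exp(\gamma\|x\|^{2})$ for suitably small $\gamma$ satisfies $\mathcal{L}_{0}W/W\le -c_{1}\|x\|^{2}+c_{2}$ by Assumptions~\ref{A2}-\ref{A3}. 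Plugging these into the Cattiaux-Guillin-Wang estimate and optimizing constants produces exactly $c_{LS}\le\frac{2m^{2}+8M^{2}}{m^{2}M\beta}+\frac{1}{\lambda_{*}}\left(\frac{6M(d+\beta)}{m}+2\right)$, uniformly in $\mathbf{z}$. This is the main technical obstacle -- the delicate part is producing the Lyapunov constants that combine cleanly with $\lambda_{*}$.

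\textbf{Step 3 (relative entropy and closure).} Writing $g(x):=\frac{\beta}{n}\big(f(x,z_i)-f(x,\bar z_i)\big)$, one has $d\pi_{\mathbf{z}}/d\pi_{\overline{\mathbf{z}}}=e^{-g}/\mathbb{E}_{\pi_{\overline{\mathbf{z}}}}[e^{-g}]$; Jensen's inequality on $\log\mathbb{E}_{\pi_{\overline{\mathbf{z}}}}[e^{-g}]\ge -\mathbb{E}_{\pi_{\overline{\mathbf{z}}}}[g]$ gives
\begin{equation*}
D(\pi_{\mathbf{z}}\Vert\pi_{\overline{\mathbf{z}}})\le \mathbb{E}_{\pi_{\overline{\mathbf{z}}}}[g]-\mathbb{E}_{\pi_{\mathbf{z}}}[g].
\end{equation*}
Since $\|\nabla g\|\le \tfrac{2\beta}{n}(M\|x\|+B)$ by Assumption~\ref{A1}-\ref{A2}, Lemma~\ref{slm:W2-quadratic-bd} bounds the right-hand side by $\tfrac{2\beta}{n}(M\sigma+B)\,\mathcal{W}_{2}(\pi_{\mathbf{z}},\pi_{\overline{\mathbf{z}}})$. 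The Otto-Villani $T_{2}$ inequality (a consequence of the LSI in Step 2) gives $\mathcal{W}_{2}^{2}\le 2c_{LS}D$, and combining these two inequalities yields
\begin{equation*}
\mathcal{W}_{2}(\pi_{\mathbf{z}},\pi_{\overline{\mathbf{z}}})\le 4c_{LS}(M\sigma+B)\,\beta/n.
\end{equation*}

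\textbf{Step 4 (conclusion).} Apply Lemma~\ref{slm:W2-quadratic-bd} once more to the test function $f(\cdot,z)$ (whose gradient is bounded by $M\|x\|+B$ uniformly in $z$):
\begin{equation*}
\left|\int f(x,z)\,d\pi_{\mathbf{z}}-\int f(x,z)\,d\pi_{\overline{\mathbf{z}}}\right|\le (M\sigma+B)\,\mathcal{W}_{2}(\pi_{\mathbf{z}},\pi_{\overline{\mathbf{z}}})\le 4c_{LS}(M\sigma+B)^{2}\beta/n.
\end{equation*}
Taking a supremum over $z\in\mathcal{Z}$ is automatic since the right-hand side is $z$-free, and bounding $(M\sigma+B)^{2}\le c(M^{2}\sigma^{2}+B^{2})$ together with the moment bound from Step~1 delivers the stated estimate. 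The main obstacle, as indicated, is Step~2: producing a \emph{uniform} LSI for the non-convex Gibbs measure with the explicit constant claimed; the rest of the argument is a fairly mechanical chaining of transport and entropy inequalities.
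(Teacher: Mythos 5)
The paper does not actually prove this lemma: it is imported verbatim from Proposition~12 of \cite{RRT17} (the only ``proof'' in the paper is the citation), so the comparison here is really against the argument in that reference. Your reconstruction follows exactly the same architecture as \cite{RRT17}: uniform second-moment bound for $\pi_{\mathbf{z}}$, a uniform log-Sobolev inequality via the Poincar\'e-plus-Lyapunov criterion, the relative-entropy bound $D(\pi_{\mathbf{z}}\Vert\pi_{\overline{\mathbf{z}}})\le \mathbb{E}_{\pi_{\overline{\mathbf{z}}}}[g]-\mathbb{E}_{\pi_{\mathbf{z}}}[g]$ with $g=\frac{\beta}{n}(f(\cdot,z_i)-f(\cdot,\bar z_i))$, Otto--Villani to convert the LSI into $\mathcal{W}_2^2\le 2c_{LS}D$, and the self-closing loop that yields $\mathcal{W}_2\le 4\beta c_{LS}(M\sigma+B)/n$. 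Steps 1, 3 and 4 are correct as written and are essentially mechanical.

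The genuine gap is Step 2, which you yourself flag as ``the main technical obstacle'' and then do not carry out. Deriving the explicit bound $c_{LS}\le \frac{2m^2+8M^2}{m^2M\beta}+\frac{1}{\lambda_*}\bigl(\frac{6M(d+\beta)}{m}+2\bigr)$ from the Cattiaux--Guillin--Wang criterion is the entire nontrivial content of the lemma beyond routine chaining: one must exhibit the Lyapunov function, verify the drift condition with constants expressed through $m$, $b$, $M$, $\beta$, and track how the Poincar\'e constant $1/\lambda_*$ enters the final LSI constant. Asserting that ``optimizing constants produces exactly'' the stated bound is not a proof of it; as it stands your argument establishes the lemma only conditionally on an unproved uniform LSI with that specific constant. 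A second, more cosmetic issue: in Step~4 your bound is $(M\sigma+B)^2\le c\,(M^2\sigma^2+B^2)$, which requires $c=2$ and therefore delivers $\frac{8\beta c_{LS}}{n}\bigl(\frac{M^2}{m}(b+d/\beta)+B^2\bigr)$ rather than the stated prefactor $4$; this slack (the dropped cross term $2MB\sigma$) should either be absorbed by stating the bound with the larger constant or handled by a sharper final estimate.
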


\begin{lemma}\label{lem:norm}
Let $A_J = I + J$ where $J$ is a $d$-dimensional anti-symmetric matrix, $J^T = -J$, then
\begin{equation}
\lVert A_J \rVert^{2} = 1+\lVert J \rVert^2 \geq 1\,.
\end{equation}
\end{lemma}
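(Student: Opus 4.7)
The plan is a direct computation exploiting the antisymmetry of $J$. First I would expand
\[
A_J^{T}A_J = (I+J)^{T}(I+J) = I + J + J^{T} + J^{T}J = I + J^{T}J,
\]
where the cross terms $J + J^{T}$ cancel because $J^{T} = -J$. This reduces the problem to understanding the spectrum of $I + J^{T}J$.

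Next, I would observe that $J^{T}J$ is positive semidefinite, since for any $v \in \mathbb{R}^d$ we have $v^{T}J^{T}J v = \lVert Jv \rVert^2 \geq 0$. By the definition of the spectral (operator) norm, $\lVert J^{T}J \rVert = \lVert J \rVert^{2}$, so the eigenvalues of $A_J^{T}A_J = I + J^{T}J$ are exactly $1 + \mu$ as $\mu \geq 0$ ranges over the eigenvalues of $J^{T}J$.

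Finally, using $\lVert A_J \rVert^{2} = \lambda_{\max}(A_J^{T}A_J)$, I conclude
\[
\lVert A_J \rVert^{2} = 1 + \lambda_{\max}(J^{T}J) = 1 + \lVert J \rVert^{2} \geq 1,
\]
which is the claim. There is no real obstacle here: the lemma is a one-line consequence of the cancellation $J + J^{T} = 0$. The only point requiring care is interpreting $\lVert \cdot \rVert$ as the operator (spectral) norm throughout, which is consistent with the convention used elsewhere in the paper (e.g., in Assumption~\ref{A2} and in the step-size condition of Corollary~\ref{cor:NSGLD2gibbs}).
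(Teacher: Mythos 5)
Your proposal is correct and is essentially the paper's own argument: the paper computes $\langle A_Jx,A_Jx\rangle=\lVert x\rVert^2+\lVert Jx\rVert^2$ using the cancellation $\langle x,Jx\rangle=0$ from antisymmetry, which is exactly your identity $A_J^TA_J=I+J^TJ$ read as a quadratic form. Your phrasing via $\lambda_{\max}(A_J^TA_J)$ just makes slightly more explicit why the supremum defining the operator norm splits off the constant $1$.
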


\begin{proof}
For any $x \in \mathbb{R}^d$, then we have
\begin{equation}
\langle A_J x,A_Jx \rangle = \langle (I+J)x, (I+J)x\rangle
= \lVert x \rVert^2 + \lVert Jx\rVert^2 \,.
\end{equation}
where we use the fact $\langle x,Jx \rangle = \langle -Jx,x \rangle$. 
Since it holds for any $x\in\mathbb{R}^{d}$, we conclude that
$\lVert A_J \rVert^{2} = 1+\lVert J \rVert^2$.
\end{proof}

\section{Explicit Dependence of Constants on Key Parameters} \label{sec:dependancy-para}

In this section, we discuss explicit dependence of the performance bound of the empirical risk minimization of NSGLD on the parameters $\beta \,, d \,, J \,, \lambda_{*,J}$ that is summarized in in Section~\ref{sec:comparison}. Recall the performance bound of the empirical risk minimization of NSGLD is based on $\mathcal{I}_0(\mathbf{z},J,\varepsilon) + \mathcal{I}_1(\mathbf{z},J,\varepsilon) + \mathcal{I}_2$ from Corollary~\ref{cor:ERM-bd},
\begin{equation*}
\mathcal{I}_{0}(\mathbf{z},J,\varepsilon) : =   \left[ \left( \frac{M+B}{2} +\frac{B}{2}+A\right)\hat{C}_{\mathbf{z},J} + (M+B)\mathcal{D}_c\right]\cdot \varepsilon \,,
\end{equation*}
where $\hat{C}_{\mathbf{z},J}$ is a constant depending on data set $\mathbf{z} \in \mathcal{Z}^n$ and a $d \times d$ anti-symmetric matrix $J$. Then it follows
\begin{equation}
\mathcal{I}_0(\mathbf{z},J,\varepsilon) = \tilde{\mathcal{O}}\left(\hat{C}_{\mathbf{z},J} \, \varepsilon \right) \,.
\end{equation}
And 
\begin{equation*}
\mathcal{I}_{1}(\mathbf{z},J,\varepsilon) := \left(M\sqrt{\mathcal{C}_d}+B \right)\left( \hat{C_0} \frac{\varepsilon}{\sqrt{\lambda_{\mathbf{z},J=0}}}+ \hat{C_1}\delta^{1/4}\sqrt{\frac{2\log(1/\varepsilon)}{\lambda_{\mathbf{z},J} }} \lVert A_J \rVert \right) \sqrt{\log\left( \frac{2\log(1/\varepsilon)}{\lambda_{\mathbf{z},J}} \right)}\,,
\end{equation*}
with 
\begin{equation*}
\hat{C}_0 = \tilde{\mathcal{O}}(\sqrt{\beta}\sqrt{\beta+d}) \,, \qquad \hat{C}_1 = \tilde{\mathcal{O}}(\sqrt{\beta}) \,.
\end{equation*}
By setting the gradient noise $\delta$ equal to the step size $\eta$, we get
\begin{align}
\mathcal{I}_1(\mathbf{z},J,\varepsilon) & = \tilde{\mathcal{O}}\left(\left( \sqrt{\beta}\sqrt{\beta+d} \, \frac{\varepsilon}{\sqrt{\lambda_{\mathbf{z},J=0}}} \right)  \sqrt{\log\left( \frac{\log(1/\varepsilon)}{\lambda_{\mathbf{z},J}} \right)} \right) \nonumber \\
&  = \tilde{\mathcal{O}}\left(\left( \sqrt{\beta}\sqrt{\beta+d} \, \frac{\varepsilon}{\sqrt{\lambda_{\mathbf{z},J=0}}} \right) \left(\log\log(\varepsilon^{-1})  + \log\left(\frac{1}{\lambda_{\mathbf{z},J}}\right)\right)^{1/2}\right) \nonumber \\
& \leq \tilde{\mathcal{O}}\left( \frac{\sqrt{\beta}(\beta+d)}{\sqrt{\lambda_{\mathbf{z},J=0}}} \, \varepsilon \right)\,,
\end{align}
where the factor $\sqrt{\log\log(\varepsilon^{-1})}$ is negligible comparing to other factors,
we also used $\lambda_{\mathbf{z},J} > \lambda_{\mathbf{z}, J=0}$ and $1/\lambda_{\mathbf{z}, J=0} = e^{\tilde{\mathcal{O}}(\beta+d)}$ according to \cite{RRT17}. 
Moreover, for $\mathcal{I}_2$, we have
\begin{equation}
\mathcal{I}_2  = \frac{d}{2\beta}\log\left( \frac{eM}{m}\left( \frac{b\beta}{d}+1 \right) \right) = \tilde{\mathcal{O}}\left( \frac{d\log(1+\beta)}{\beta} \right) \,.
\end{equation}
Hence, the performance bound for empirical risk minimization of NSGLD is
\begin{equation}
\tilde{\mathcal{O}}\left( \hat{C}_{\mathbf{z},J} \, \varepsilon  +  \frac{\sqrt{\beta}(\beta+d)}{\sqrt{\lambda_{\mathbf{z},J=0}}} \, \varepsilon  + \frac{d\log(1+\beta)}{\beta} \right) \,.
\end{equation}

\end{document}